\DeclareMathOperator*{\psnr}{PSNR}
\DeclareMathOperator*{\snr}{SNR}
\DeclareMathOperator*{\argmin}{argmin}
\DeclareMathOperator*{\divergence}{div}
\DeclareMathOperator*{\sinc}{sinc}
\DeclareOldFontCommand{\rm}{\normalfont\rmfamily}{\mathrm}
\DeclareOldFontCommand{\sf}{\normalfont\sffamily}{\mathsf}
\DeclareOldFontCommand{\tt}{\normalfont\ttfamily}{\mathtt}
\DeclareOldFontCommand{\bf}{\normalfont\bfseries}{\mathbf}
\DeclareOldFontCommand{\it}{\normalfont\itshape}{\mathit}
\DeclareOldFontCommand{\sl}{\normalfont\slshape}{\@nomath\sl}
\DeclareOldFontCommand{\sc}{\normalfont\scshape}{\@nomath\sc}
\newcommand{\norm}[1][\cdot]{\left\| #1\right\|}
\newcommand{\scalarproduct}[2]{\left\langle #1,#2\right\rangle}
\newcommand{\A}{\mathscr{A}}
\newcommand{\T}{\mathscr{T}}
\newcommand{\R}{\mathbb{R}}
\renewcommand{\P}{\mathscr{P}}
\newcommand{\F}{\mathscr{F}}
\newcommand{\Reg}{\mathscr{R}} 
\newcommand{\X}{\mathcal{X}}
\newcommand{\Y}{\mathcal{Y}}
\newcommand{\Sob}{H}
\newcommand{\rmd}{\textnormal{d}}
\newtheorem{theorem}{Theorem}[section]
\newtheorem{lemma}[theorem]{Lemma}
\newtheorem{proposition}[theorem]{Proposition}
\newtheorem{corollary}[theorem]{Corollary}
\title{A class of regularizations based on nonlinear isotropic diffusion for inverse problems}
\author{$^\dag$Bernadette N. Hahn, $^\ddag$Ga\"{e}l Rigaud, $^\dag$Richard Schm\"{a}hl}
\date{\small
    $^\dag$Department of Mathematics, University of Stuttgart, D-70569 Stuttgart, Germany\\%
    $^\ddag$Center for Industrial Mathematics, University of Bremen, D-28344 Bremen, Germany\\[2ex]%
}
\begin{document}

\maketitle 
    
\begin{abstract}
Building on the well-known total-variation (TV), this paper develops a general regularization technique based on nonlinear isotropic diffusion (NID) for inverse problems with piecewise smooth solutions. The novelty of our approach is to be adaptive (we speak of A-NID) \textit{i.e.} the regularization varies during the iterates in order to incorporate prior information on the edges, deal with the evolution of the reconstruction and circumvent the limitations due to the non-convexity of the proposed functionals. After a detailed analysis of the convergence and well-posedness of the method, this latter is validated by simulations perfomed on computerized tomography (CT).
\end{abstract}

\begin{keywords}{inverse problems, nonlinear isotropic diffusion, variational methods}\end{keywords}

\section{Introduction}

Many image processing tasks as well as industrial and medical applications ask for the solution of an inverse problem, \textit{i.e.}
\begin{center} \it
find $f$ from $g^\varepsilon$ with $\Vert g^\varepsilon - \A f\Vert_{\Y} \leq \epsilon$
\end{center}
in which $\A : \X \to \Y$ stands for an observation mapping between two function spaces $\X$ and $\Y$ and characterizes the nature of the application. In photography and image restoration, $\A$ is often assimilated to a convolution operator characterizing, for instance, the blur produced by an out-of-focus camera. In non-destructive testing and imaging, typically in Computerized Tomography (CT), $\A$ describes the evolution of a wave by the medium under study.  The inverse problem consists then in reconstructing a characteristic function of the medium depending on the nature of the waves.

An important aspect of inverse problems, in particular when $\A$ is compact, is their ill-posed nature, \textit{i.e.} their solution given by the Moore-Penrose inverse, does not depend continuously on the data. Regularization is thus essential to deal with this unstability issue and aims to build a family of continuous operators which approximate the Moore-Penrose inverse. While many different approaches were developed to build such a regularization, we focus in this manuscript on the solutions of   unconstrained minimization problems under the form
\begin{equation}
f^\gamma = \argmin_{f\in\X} \left\lbrace\frac12 \norm[\A f -g]^2_\Y + \Reg_\gamma(f) \right\rbrace
\end{equation}
where the penalty term $\Reg_\gamma$ constraints the solution space according to some \textit{a priori} on the solution: smoothness, sparsity, piecewise constancy, etc. Regularization functionals of this form are generally called \textit{Tikhonov functionals}.
An important feature of images is the contrast and the sharpness of the edges within. At this aim, the total-variation functional $TV(f)$ was introduced in \cite{rudin92} and has become one of the standard penalty terms in image processing and imaging. Typically, the least-square minimization problem constrained by TV is solved by a primal-dual approach in the discrete setting, see for instance \cite{chambolle11}. However, assuming the solution space to be a subset of  $\Sob^1(\Omega)$, $\Omega \subset \mathbb{R}^n$ a compact set, the solution $f_{TV}^\gamma$ obtained by a total-variation regularization can be seen as a minimizer of the special case $p(s) = \sqrt{s}$ of 
\begin{equation}
	\label{eq:general_representation_PM}
\min_{f\in\X} \frac12 \norm[\A f -g]^2_\Y + \gamma \int_\Omega p\left(\norm[\nabla f(x)]_2^2\right) \mathrm{d}x
\end{equation}
where $\norm[\cdot]_2$ denotes the euclidean norm. A gradient descent is then possible by approximating $p_{TV}$ by the continuously differentiable $p_{AV}(s) = \sqrt{\epsilon+s}$, $\epsilon>0$, as proposed in \cite{acar94}.
This notation has the advantage to be more flexible for the construction of a suited function $p$ at the cost of a regularity condition on the solution space. 

\subsection{Regularization meets diffusion equations}

\par The minimization problem (\ref{eq:general_representation_PM}) was studied for different functions $p$, see \cite{scherzer00} or \cite{teboul98}. One of the motivation of the representation (\ref{eq:general_representation_PM}) is the interpretation of the regularization term with diffusion equations. In physics, diffusion processes rule the evolution of concentration $u$ and satisfy
$$
\partial_t u=\divergence \left( D \cdot \nabla u \right)
$$
where $D$ is a diffusion tensor characterizing the nature of diffusion. An interesting case is $D \equiv 1$ which leads to the heat equation
\begin{equation}\label{eq:classical-heat-equation}
\left\lbrace
\begin{array}{rcl}
\partial_t u(t,x) &=& \Delta u \\
u(0,x) & = & u_0.
\end{array}
\right.
\end{equation}
Solutions to \cref{eq:classical-heat-equation} can be interpreted by a smoothing operator with increasing smoothness over time. From a local point of view, one can speak of \textit{forward diffusion}. The inverse heat equation consists in reverting the process by solving $\partial_t u(t,x) = - \Delta u $. In this case, one can speak of \textit{backward diffusion}. However, this inverse problem is exponentially ill-posed, see for instance \cite{rieder03}.  

In order to exploit local backward diffusions in order to sharpen an image, Perona and Malik proposed in \cite{perona90} the following nonlinear isotropic model   
\begin{equation}\label{eq:classical-perona-malik}
\left\lbrace
\begin{array}{rcl}
\partial_t u(t,x) &=& \divergence \left(\varphi(\norm[\nabla u]_2^2)\nabla u\right) \\
u(0,x) & = & u_0
\end{array}
\right.
\end{equation}
with $\varphi :\mathbb{R}^+\to \mathbb{R}^+$ a decreasing function. Evaluating the divergence operator in (\cref{eq:classical-perona-malik}) leads to 
\begin{equation*}
\partial_{t}u = \varphi\left(\norm[\nabla u]^2_2\right)u_{\xi\xi}+\psi\left(\norm[\nabla u]_2\right)u_{\eta\eta} \quad \mbox{with } 
\psi(s) = \frac{\mathrm{d}}{\mathrm{d} s} (s\cdot \varphi(s^2)),
\end{equation*}
$\xi \perp \nabla u$ and $\eta \; \| \; \nabla u$, see \textit{e.g.} \cite{weickert98,bredies11,prasath15}. Given an edge, the first term carried by $u_{\xi\xi}$ rules the diffusion in the tangent direction to the edge while the second term carried by $u_{\eta\eta}$ controls the diffusion in the normal direction. Since $\varphi$ is a positive function, the preservation of edges requires thus $\psi(s)\leq 0$ when $s\to \infty$. In order to be consistent between the principle of diffusion in physics and our regularization strategy, we will consider the following notations:
\begin{itemize}
\item $p:\R_0^+\to\R_0^+$ will denote the \textit{penalty function};
\item $\varphi(s):= p'(s)$ will stand for the \textit{diffusion function} and
\item $\psi(s) := s \cdot \varphi(s^2)$ will define the \textit{flux function}. 
\end{itemize}

	\par If the function is in some sense smooth enough s.t. the gradient exists (and stays bounded), then the sign of $\psi$ controls the behaviour of the diffusion equation at this edge depending on a certain threshold $\lambda>0$.
Perona and Malik proposed 	
$$
\varphi_1(s^2) = \frac{1}{1+\frac{s^2}{\lambda^{2}}}\quad \textnormal{and}\quad \varphi_2(s^2) =e^{-\frac{s^2}{\lambda^{2}}}.
$$	
The behaviour of the \textit{flux function} $\psi_2 (s) := s \cdot \varphi_2(s^2)$ is depicted in \Cref{fig:flux_PM}. Here the parameter $\lambda$ controls the edge or contrast enhancement since all gradients $\nabla u < \lambda$ are subject to a forward diffusion (and thus a smoothing) while all gradients $\nabla u > \lambda$ are subject to backward diffusion (and thus edge enhancement). 

In the case of total-variation, a similar behaviour can be observed. First, due to the non-differentiability in 0 of the penalty function $p_{TV}(s) := \sqrt{s}$, it is possible to approximate this functional using the model proposed by Acar and Vogel \cite{acar94}, \textit{i.e.} $p_{AV}(s) := \sqrt{\epsilon+s}$. The corresponding flux function is then given by $\psi_{AV} (s) = s \cdot (\epsilon+s^2)^{-1/2}$ and is depicted in Figure \Cref{fig:flux_PM}. We observe that the small values of $\norm[\nabla u]$ are subject to a forward diffusion while the largest gradients reach a plateau. This plateau indicates that total-variation does not perform backward diffusion but instead avoids the forward diffusion in the normal direction to the contours. 

\begin{figure}\centering
\includegraphics[width=0.8\linewidth]{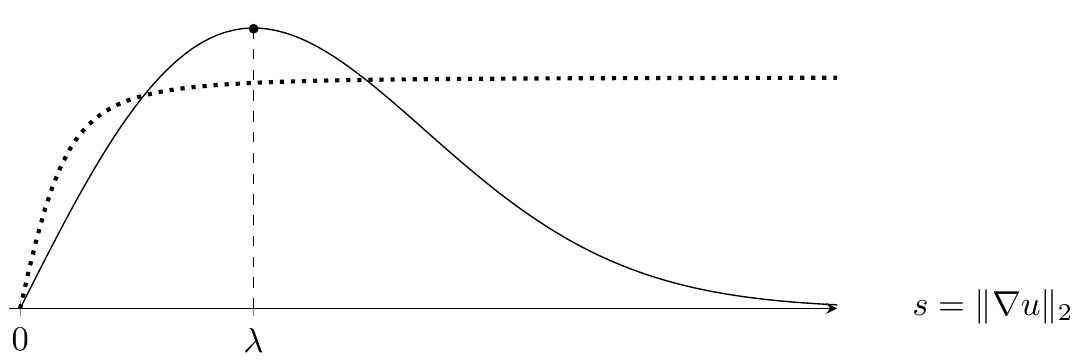}
\caption{General behaviour of the flux function associated to $\varphi_2$ (solid) and of $\varphi_{AV}$ (dotted).\label{fig:flux_PM}	}
\end{figure}

Total-Variation has the advantage to provide satisfactory image restoration without \textit{a priori} information about the target image. On the other hand, the Perona-Malik model \cite{perona90} has shown impressive results on edge preservation with a suited contrast parameter $\lambda$ and has been extended to inverse problems with applications in image restoration, see \cite{scherzer00}. A disadvantage of the Perona-Malik model is to be sensitive to the sole parameter $\lambda$ when the complexity of an image makes intervene many different levels of contrast.

\subsection{Our proposed model}

In this paper, we propose to extend the Perona-Malik model to a larger class of nonlinear isotropic diffusion (NID) equations in order to handle the complexity of an image. For example in imaging, it is standard to use the prior knowledge on the intensities of the materials to help the post-processing and the image reconstruction from the measurement.
Therefore, one can assume to know \textit{a priori} the magnitude of the edges, noted $(s_k)_k$. For such images a suited penalty term will then produce backward diffusion around the edge magnitudes $s_k$ and forward diffusion otherwise leading to alternate forward and backward diffusion adequately with the prior structure of the target image. 
	\par Similar approaches were proposed in the literature: \cite{douiri07} and \cite{correia11} adapted the Perona-Malik equation for diffuse optical tomography,  \cite{roy18} to acousto-electric tomography. \cite{nordstroem90} considedred a "biased" diffusion process for edge detection, \cite{welk05} applied it to the deconvolution problem, and \cite{huang13} used a fourth-order version for super-resolution image reconstruction. While the latter two were based on partial differential equations, the former were very problem specific. Another combination of these methods can be found in \cite{scherzer00}, where the authors analyzed regularizations and diffusion processes for image denoising, that is the special case $\A$ being the identity operator.  In \cite{charbonnier97}, the authors gave conditions on the penalty term to perserve edges and considering a single optimization problem. However, due to the non-convexity of the penalty term, existence and stability of a solution were not achieved, see \cite{teboul98}. The relation between PDEs and gradient flow equations was introduced in \cite{prasath15}. 
	
\par In a first step, we propose to solve the inverse problem $\A f = g$, $\A:\X\to\Y$ linear and bounded, by the minimization problem
$$
\label{´}
\min_{f\in\X} \frac12 \norm[\A f -g]_{\Y}^2  + \sum_{k=1}^K \gamma_k \int_\Omega p_{k}(\norm[\nabla f(x)]_2^2) \mathrm{d}x + \frac{\alpha}{2} \norm[f]_{\X}^2 \quad \alpha,\gamma_k>0 .
$$
The use of $K$ different thresholds enables a better control on the condition and intensity of a forward/backward diffusion and thus provides a better trade-off between the smoothing process and the contrast-enhancement. As mentioned in \cite{li94}, the intensity of the diffusion -- determined by the range where $\varphi_{k}'$ is sufficiently smaller than zero --  is an important factor of the \textit{effectiveness} of the edge preserving regularization. This latter is an important advantage for the proposed approach over the original Perona-Malik functional.

\par
However, this model presents a well-known flaw. Even if $f\in \Sob^1(\Omega)$, the functional is not weakly lower semicontinuous due the missing convexity of $p_k$, making it difficult to analyse in terms of convergence as mentioned by \cite{scherzer00}. Based on \cite{catte92}, a solution was proposed in \cite{scherzer00} for the iterated Perona-Malik regularization scheme in image denoising. It consists in approximating the gradient using a suitable smoothing operator, typically a convolution operator with a normal distribution. In this manuscript, we follow this idea and will consider the operator 
\begin{equation*}
\nabla_\sigma f := \nabla G_\sigma \ast f \qquad \mbox{with} \quad 
G_\sigma (x) = \frac{1}{2\pi \sigma^2} \exp\left(- \frac{\norm[x]_2^2}{2\sigma^2} \right) 
\end{equation*}
instead of $\nabla f$. A second advantage of this approximation is to rule out the regularity on $f$ regarding $\Sob^1(\Omega)$ and allow our study for $f\in \X =  L_2(\Omega)$, the space of square-integrable functions. Our general NID minimization problem is thus
\begin{align}
	\label{eq:def_NID}
	\min_{f\in L_2(\Omega)}\ &\T_{g,\gamma,\sigma,\alpha}(f) := \frac12 \norm[\A f -g]_{\Y}^2  + \Reg_{NID}(f)  + \frac{\alpha}{2} \norm[f]_{ L_2(\Omega)}^2 \\
	&\mbox{with}\qquad \Reg_{NID}(f) = \sum_{k=1}^K \gamma_k \int_\Omega p_{k}(\norm[\nabla_{\sigma_k} f(x)]_2^2) \mathrm{d}x \nonumber
\end{align}
in which $\alpha,(\gamma_k)_k,(\sigma_k)_k$ are positive parameters and $\varphi_k$ -- the diffusion functions associated to $p_k$ -- are smooth positive functions. The construction of the function $\varphi_k$ will thus depend on the application and on the type of images in order to produce suited alternating forward-backward diffusions.  Not crucial in practice, the term $\norm[f]_{ L_2(\Omega)}^2 $ enforces coercivity of the functional. The well-posedness of the NID regularization method is studied in	\Cref{subsec:well-posedness-of-pm-reg} while a gradient descent algorithm is developed in \Cref{subsec:gradient-descent-for-pm-reg}.

\par A drawback of non-convex penalty terms as $\Reg_{NID}$ is to result often in local and not global minima. The initial value becomes then crucial to ensure local convergence to a global minimizer. Global convergence can be shown in some cases but fails in general. While these difficulties are inherent to our class of regularizations, we propose to relax the penalty term by allowing it to vary during the iteration process. This adaptive NID approach (A-NID) consists in applying a gradient descent on successive functionals of the form
\begin{equation}\label{eq:def_aNID}
\T^n_{g,\gamma,\sigma,\alpha}(f) := \frac12 \norm[\A f -g]_{\Y}^2  + \Reg_{NID}^n(f)  + \frac{\alpha}{2} \norm[f]_{ L_2(\Omega)}^2 
\end{equation}
\textit{i.e.}
\begin{equation*}
f^{n+1} = f^n - t_n \nabla_f \T^n_{g,\gamma,\sigma,\alpha}(f^n)
\end{equation*}
with $t_n$ a given step size and $\Reg_{NID}^n$ a varying NID-based penalty term. The convergence of this approach is shown in \Cref{sec:a-nid}.

\par Finally we provide in \Cref{sec:numerical-implementation} numerical tests of both static and adaptive NID regularizers for CT-data and compare them to the standard methods.

\section{Well-posedness of the NID regularizer} 		
\label{subsec:well-posedness-of-pm-reg}
	
In this section, we first establish existence of solutions to the optimization problem \cref{eq:def_NID}, as well as stability and convergence to a generalized version of minimal norm solutions. We denote by $C_{+,b}^m(\mathbb{R}^+_0)\subset C^m(\mathbb{R})$ the space of $m$-times continuously differentiable functions $f$ mapping $\mathbb{R}_0^+$ onto $\mathbb{R}_0^+$ s. t. $f,f',...,f^{(m-1)}$ are bounded. We will always asume that the $p_k$ are at least $C_{+,b}^1(\mathbb{R}_0^+)$ for $k=1,...,K$.
The following Lemma delivers the fundamental properties of the studied functional.  

	\begin{lemma}\label{lemma:pm-weakly-lower-semicont}
	The associated functional $\T_{g,\gamma,\sigma,\alpha}:L_2(\Omega)\to \mathbb{R}$ defined in \cref{eq:def_NID} is proper, bounded from below, coercive and  weakly (sequentially) lower semicontinuous.
	\end{lemma}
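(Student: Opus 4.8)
The plan is to verify the four properties one at a time, treating the first three as essentially immediate consequences of the nonnegativity of each summand, and reserving the genuine work for weak lower semicontinuity. Since each $p_k$ maps $\R_0^+$ into $\R_0^+$, the integrand defining $\Reg_{NID}$ is nonnegative, so $\Reg_{NID}(f)\geq 0$; combined with the two manifestly nonnegative quadratic terms this yields $\T_{g,\gamma,\sigma,\alpha}(f)\geq 0$ for every $f$, giving boundedness from below. Boundedness of each $p_k$ and the finite measure of the compact set $\Omega$ make $\Reg_{NID}(f)$ finite, while the remaining terms are finite because $\A$ is bounded; hence $\T_{g,\gamma,\sigma,\alpha}$ is finite-valued on all of $L_2(\Omega)$ and in particular proper. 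Coercivity follows by discarding the nonnegative data-fit and NID terms and retaining only $\frac{\alpha}{2}\norm[f]_{L_2(\Omega)}^2$, which forces $\T_{g,\gamma,\sigma,\alpha}(f)\to\infty$ as $\norm[f]_{L_2(\Omega)}\to\infty$.

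For weak lower semicontinuity I would fix a sequence $f_n\rightharpoonup f$ in $L_2(\Omega)$ and handle the three summands individually, using that a finite sum of weakly lower semicontinuous functionals is again weakly lower semicontinuous. The term $\frac{\alpha}{2}\norm[f]_{L_2(\Omega)}^2$ is weakly lower semicontinuous because the norm is; the data term $\frac12\norm[\A f-g]_{\Y}^2$ is weakly lower semicontinuous because $\A$ is linear and bounded (so $f_n\rightharpoonup f$ implies $\A f_n\rightharpoonup \A f$) and $h\mapsto \frac12\norm[h-g]_{\Y}^2$ is convex and continuous, hence weakly lower semicontinuous.

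The crux is the non-convex penalty $\Reg_{NID}$, and here I would exploit the smoothing built into $\nabla_{\sigma_k}$. For each fixed $x\in\Omega$ and in each coordinate, $\nabla_{\sigma_k}f(x)=(\nabla G_{\sigma_k}\ast f)(x)=\inner{\nabla G_{\sigma_k}(x-\cdot)}{f}$ is a bounded linear functional of $f$, since the derivatives of the Gaussian lie in $L_2(\Omega)$. Weak convergence $f_n\rightharpoonup f$ therefore gives the pointwise convergence $\nabla_{\sigma_k}f_n(x)\to\nabla_{\sigma_k}f(x)$ for every $x$, and continuity of $p_k$ upgrades this to $p_k(\norm[\nabla_{\sigma_k}f_n(x)]_2^2)\to p_k(\norm[\nabla_{\sigma_k}f(x)]_2^2)$ pointwise. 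As these integrands are uniformly bounded by $\norm[p_k]_\infty$ on the finite-measure set $\Omega$, dominated convergence yields $\Reg_{NID}(f_n)\to\Reg_{NID}(f)$, that is, full weak continuity, which is even stronger than the required lower semicontinuity. Adding the three contributions establishes weak lower semicontinuity of $\T_{g,\gamma,\sigma,\alpha}$.

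The main obstacle is exactly this last term: without regularization, $f\mapsto\int_\Omega p_k(\norm[\nabla f]_2^2)\,\rmd x$ is not weakly lower semicontinuous for non-convex $p_k$, the flaw already noted in the introduction. The decisive observation is that replacing $\nabla$ by the smoothing $\nabla_{\sigma_k}$ converts weak $L_2$-convergence into pointwise convergence of the smoothed gradients, so that the non-convexity of $p_k$ is neutralized and a routine dominated-convergence argument closes the proof.
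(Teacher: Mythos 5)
Your proof is correct, and its skeleton matches the paper's: the proper/bounded-below/coercive claims are dispatched exactly as in the paper, and for the hard part both you and the paper actually prove the stronger statement that $\Reg_{NID}$ is weakly sequentially \emph{continuous}, then add the two weakly lower semicontinuous norm terms (the data term via weak-to-weak continuity of the bounded linear operator $\A$). The genuine difference is how weak continuity of the penalty is obtained. The paper invokes compactness of $\nabla_{\sigma_k}\colon L_2(\Omega)\to C^1(\Omega)$ to get \emph{uniform} convergence of $\nabla_{\sigma_k}f_n$, which bounds the arguments in a compact interval $[0,C^2]$ on which $p_k$ is Lipschitz, and then passes to the integrals. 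You instead observe that for each fixed $x$ the evaluation $f\mapsto \nabla_{\sigma_k}f(x)=\scalarproduct{\nabla G_{\sigma_k}(x-\cdot)}{f}_{L_2(\Omega)}$ is a bounded linear functional, so weak convergence gives pointwise convergence of the smoothed gradients, and you conclude by dominated convergence with the constant dominating function $\norm[p_k]_\infty$ on the finite-measure set $\Omega$. Your route is more elementary and self-contained: it sidesteps the compactness of the smoothing operator, which the paper asserts without proof (it needs an Arzel\`a--Ascoli-type argument), and it uses only continuity and boundedness of $p_k$, both guaranteed by the standing assumption $p_k\in C^1_{+,b}(\mathbb{R}_0^+)$. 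The paper's route buys the stronger intermediate fact of uniform convergence and uses $p_k$ only locally, so it would work even without global boundedness of $p_k$; if you wanted the same robustness, you could note that weakly convergent sequences are norm-bounded, so $\norm[\nabla_{\sigma_k}f_n(x)]_2$ is uniformly bounded in $n$ and $x$, and the maximum of $p_k$ on the resulting compact interval serves as the dominating constant. Both arguments deliver exactly the convergence \cref{eq:strong-conv} of the penalty integrals, which is what the paper later reuses in the corollary upgrading weak to strong convergence of minimizers.
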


	\begin{proof}
	    That it is proper and bounded from below, follows immediately from the regularity and positivity constraints on $p_k$.
		The additive term $\frac{\alpha}{2} \norm[f]_{ L_2(\Omega)}^2 $ ensures that the functional $\T_{g,\gamma,\sigma,\alpha}$ is coercive. 
		Regarding the weakly lower semi-continuous property, we make use of the proof of [\cite{scherzer00}, Theorem 4]. There the authors showed lower semicontinuity for the regularized (standard) Perona-Malik functional 
		\begin{align*}
			\T_{\text{R-PM}}(f):=\norm[f-u]_{L_2(\Omega)}^2+h\int_{\Omega}\ln\left(1+\norm[\nabla_\sigma f]_2^2\right)\rmd x
		\end{align*}
		with $u\in L_2(\Omega),h\in \mathbb{R}^+$, which is a very specific case of our more general framework. Since $\A$ is linear and continuous (and hence maps weakly convergent sequences to weakly convergent sequences), the lower semicontinuity follows for the norm terms immediately.
		We now have to verify the weakly lower semicontinuity for our suggested regularization term $\Reg_{NID}$.\\
		Let $(f_n)_n \subset L_2(\Omega)$ be a sequence with weak limit $f^*$. Then, due to the compactness of $\nabla_{\sigma_k}:L^2(\Omega)\to C^1(\Omega)$, the sequence $\left(\nabla_{\sigma_k} f_n\right)_n$ converges uniformly to some $\nabla_{\sigma_k} f^*$ with $k\in\{1,\ldots,K\}$ arbitrary. 
		In particular $\left(\nabla_{\sigma_k} f_n\right)_n$ is bounded by some constant $C>0$ with respect to the sup norm. Since $p_k$ is in $C^1(\mathbb{R})$, it is Lipschitz continuous on $[0,C^2]$ and we find that $p_{k}\left(\norm[ \nabla_{\sigma_k} f_n]_2^2\right)\to p_{k}\left(\norm[ \nabla_{\sigma_k} f]_2^2\right)$ uniformly for all $k$ and thus
		\begin{equation}
			\label{eq:strong-conv}
			\int_{\Omega}p_k\left(\left\|\nabla_{\sigma_k} f_n\right\|^2_2\right)\to \int_{\Omega}p_k\left(\left\|\nabla_{\sigma_k} f\right\|^2_2\right),
		\end{equation} 
		The assertion then follows using a linearity argument.
	\end{proof}

Standard results in the calculus of variations, see \textit{e.g.} [\cite{bredies11}, Theorem 6.17], combined with \Cref{lemma:pm-weakly-lower-semicont} now imply the existence of solutions.
	\begin{theorem} Let $(p_k)_{k=1,...,K}\in C^1_+\left(\mathbb{R}_0^+\right)$. Then
		the functional $\T_{g,\gamma,\sigma,\alpha}:L_2(\Omega)\to \R$ 
		admits a minimizer for every choice of parameters $(\gamma_k)_k>0$, $(\sigma_k)_k>0$, $\alpha>0$ and for every $g\in \Y$.
	\end{theorem}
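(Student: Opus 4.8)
The plan is to invoke the direct method of the calculus of variations, for which \Cref{lemma:pm-weakly-lower-semicont} has already assembled every hypothesis; this is precisely the content of the cited standard result [\cite{bredies11}, Theorem 6.17], and I would either cite it directly or reproduce its short argument. Concretely, since $\T_{g,\gamma,\sigma,\alpha}$ is proper and bounded from below, the infimum
\begin{equation*}
m := \inf_{f\in L_2(\Omega)} \T_{g,\gamma,\sigma,\alpha}(f)
\end{equation*}
is a finite real number, and I may select a minimizing sequence $(f_n)_n \subset L_2(\Omega)$ with $\T_{g,\gamma,\sigma,\alpha}(f_n) \to m$.

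Next I would extract a weakly convergent subsequence. Coercivity forces $(f_n)_n$ to be bounded in $L_2(\Omega)$: otherwise $\norm[f_{n_j}]_{L_2(\Omega)} \to \infty$ along some subsequence would drive $\T_{g,\gamma,\sigma,\alpha}(f_{n_j}) \to \infty$, contradicting convergence to the finite value $m$. Because $L_2(\Omega)$ is a Hilbert space and therefore reflexive, bounded sequences are weakly sequentially precompact; hence there exist a subsequence $(f_{n_j})_j$ and an element $f^\ast \in L_2(\Omega)$ with $f_{n_j} \rightharpoonup f^\ast$.

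Finally I would pass to the limit using weak (sequential) lower semicontinuity, the fourth property supplied by \Cref{lemma:pm-weakly-lower-semicont}. This yields
\begin{equation*}
\T_{g,\gamma,\sigma,\alpha}(f^\ast) \;\leq\; \liminf_{j\to\infty} \T_{g,\gamma,\sigma,\alpha}(f_{n_j}) \;=\; m,
\end{equation*}
and since $m$ is the infimum the reverse inequality is automatic, so $\T_{g,\gamma,\sigma,\alpha}(f^\ast) = m$ and $f^\ast$ is the desired minimizer.

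The point worth stressing is that no step above is the real obstacle: the genuinely delicate work has already been carried out in \Cref{lemma:pm-weakly-lower-semicont}, where the non-convexity of the penalty functions $p_k$ would ordinarily obstruct weak lower semicontinuity, and is circumvented only because the smoothing operator $\nabla_{\sigma_k}$ is compact from $L_2(\Omega)$ into $C^1(\Omega)$, turning the weak convergence $f_{n_j}\rightharpoonup f^\ast$ into uniform convergence of the smoothed gradients. Given that Lemma, the present theorem is a routine corollary, and the only item demanding any care in the write-up is the appeal to reflexivity of $L_2(\Omega)$ to guarantee weak precompactness of the bounded minimizing sequence.
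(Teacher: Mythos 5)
Your proof is correct and follows exactly the same route as the paper: the paper simply invokes the direct method of the calculus of variations (citing Theorem 6.17 of \cite{bredies11}) together with \Cref{lemma:pm-weakly-lower-semicont}, and your write-up just unfolds that standard argument explicitly (minimizing sequence, boundedness via coercivity, weak compactness from reflexivity of $L_2(\Omega)$, passage to the limit via weak lower semicontinuity). Nothing is missing.
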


An important aspect of a minimization functional is to guarantee stability when the data $g$ suffer  some perturbations. Following on from \cite{acar94} which studied the case of standard TV, we first demonstrate uniform coercivity and consistency of the NID functional.

	\begin{proposition}\label{prop:consistency}
		Let $(g_\delta^n)_n\subseteq \Y$ be a sequence with (strong) limit $g\in \Y$.
		The corresponding sequence $\T_{g_\delta^n,\gamma,\sigma,\alpha}$ (with $\gamma,\alpha,\sigma$ fixed) is
		\begin{itemize}
			\item \textbf{uniformly coercive}, i.e. for all $(f_n)_n\subseteq L_2(\Omega)\textnormal{ with }\norm[f_n]_{L_2(\Omega)}\to \infty$ it holds
			\begin{equation}
			\label{eq:uniform-coercivity}
			\lim\limits_{n\to\infty} \T_{g_\delta^n,\gamma,\sigma,\alpha}(f_n)\to\infty, 
			\end{equation}
			\item \textbf{consistent}, i.e. for every $M>0$ and $\varepsilon>0$ there exists $N_\varepsilon\in \mathbb{N}$ with
			\begin{equation}
			\label{eq:functional-consistency}
			\left|\T_{g_\delta^n,\gamma,\sigma,\alpha}(f)-\T_{g,\gamma,\sigma,\alpha}(f)\right|\leq \varepsilon\quad\forall n\geq N_\varepsilon
			\end{equation}
			for all  $f\in L_2(\Omega)$ with $\norm[f]_{L_2(\Omega)}\leq M$.
		\end{itemize}
	\end{proposition}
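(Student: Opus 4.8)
The plan is to exploit the fact that the regularization term $\Reg_{NID}$ and the coercivity term $\frac{\alpha}{2}\norm[f]_{L_2(\Omega)}^2$ are completely independent of the data: only the discrepancy term $\frac12\norm[\A f - g]_\Y^2$ changes with $g$. Both assertions then reduce to elementary estimates on this single term, and the nonconvexity of the $p_k$ plays no role.

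For uniform coercivity, I would simply note that, since $p_k$ maps into $\R_0^+$, the regularization term satisfies $\Reg_{NID}(f)\geq 0$, and the discrepancy term is nonnegative as well. Hence, for every $n$,
$$
\T_{g_\delta^n,\gamma,\sigma,\alpha}(f) \geq \frac{\alpha}{2}\norm[f]_{L_2(\Omega)}^2 ,
$$
a lower bound that is independent of $n$. If $\norm[f_n]_{L_2(\Omega)}\to\infty$, the right-hand side evaluated at $f_n$ tends to $+\infty$, which yields \cref{eq:uniform-coercivity}.

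For consistency, I would compute the difference of the two functionals. Since the regularization and penalty terms cancel, one is left with
$$
\T_{g_\delta^n,\gamma,\sigma,\alpha}(f) - \T_{g,\gamma,\sigma,\alpha}(f) = \frac12\left(\norm[\A f - g_\delta^n]_\Y^2 - \norm[\A f - g]_\Y^2\right) = \frac12\inner{g - g_\delta^n}{2\A f - g_\delta^n - g},
$$
using the identity $\norm[a]^2 - \norm[b]^2 = \inner{a-b}{a+b}$ valid in the real Hilbert space $\Y$. By Cauchy--Schwarz, the absolute value is bounded by $\frac12\norm[g - g_\delta^n]_\Y\,(2\norm[\A f]_\Y + \norm[g_\delta^n]_\Y + \norm[g]_\Y)$. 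On the ball $\norm[f]_{L_2(\Omega)}\leq M$ the boundedness of $\A$ gives $\norm[\A f]_\Y \leq \norm[\A]\,M$, while the strong convergence $g_\delta^n\to g$ bounds $\norm[g_\delta^n]_\Y$ uniformly in $n$; together these produce a constant $C_M$, depending on $M$ but neither on $f$ nor on $n$, with $|\T_{g_\delta^n,\gamma,\sigma,\alpha}(f) - \T_{g,\gamma,\sigma,\alpha}(f)| \leq \frac{C_M}{2}\norm[g - g_\delta^n]_\Y$. Since $\norm[g - g_\delta^n]_\Y\to 0$, choosing $N_\varepsilon$ so that $\frac{C_M}{2}\norm[g - g_\delta^n]_\Y \leq \varepsilon$ for all $n\geq N_\varepsilon$ establishes \cref{eq:functional-consistency}.

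The only point that requires care is that the estimate in the second part must hold uniformly over $\{\norm[f]_{L_2(\Omega)}\leq M\}$ and over the tail of the sequence simultaneously; this is guaranteed precisely because $C_M$ absorbs the entire $f$- and $n$-dependence into the uniform bounds coming from $\norm[\A]$, $M$, and the convergence of $(g_\delta^n)_n$. I do not expect any genuine obstacle here: both properties concern only coercivity and continuity of the functional in its data argument, for which the regularizer is inert.
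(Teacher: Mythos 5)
Your proof is correct and follows essentially the same route as the paper's: coercivity comes from the data-independent term $\frac{\alpha}{2}\norm[f]_{L_2(\Omega)}^2$, and consistency reduces to bounding the difference of discrepancy terms by a constant (uniform over the ball $\norm[f]_{L_2(\Omega)}\leq M$ and over $n$) times $\norm[g-g_\delta^n]_{\Y}$. The only difference is cosmetic: you use the factorization $\norm[a]^2-\norm[b]^2=\inner{a-b}{a+b}$ plus Cauchy--Schwarz, while the paper expands the inner products and bounds $\left|\norm[g_\delta^n]_{\Y}^2-\norm[g]_{\Y}^2\right|$ via a Lipschitz constant for the squared norm on a bounded set; your variant is, if anything, slightly more self-contained.
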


	\begin{proof} 
	The uniform coercivity property is again induced by the penalty term $\frac{\alpha}{2}\norm[f]_{L_2(\Omega)}^2$.
		Regarding the consistency, it holds
		\begin{align*}				\left|\T_{g_\delta^n,\gamma,\sigma,\alpha}(f)-\T_{g,\gamma,\sigma,\alpha}(f)\right|&=\frac{1}{2}\left|\scalarproduct{ \A f-g_\delta^n}{ \A f-g_\delta^n}_{\Y}-\scalarproduct{ \A f-g}{ \A f-g}_{\Y}\right|\\			
		&\leq \norm[ \A f]_{\Y}\norm[g_\delta^n-g]_{\Y}+\frac{1}{2}\left|\norm[g_\delta^n]_{\Y}^2-\norm[g]_{\Y}^2\right|
		\end{align*}
		Since $(g_\delta^n)_n \subset \Y$ is bounded and $g\in\Y$, there exists a suitable Lipschitz-constant $L$ such that $\left|\norm[g_\delta^n]_{\Y}^2-\norm[g]_{\Y}^2\right| \leq L \norm[g_\delta^n-g]_{\Y}$ which leads with $\A$ being bounded to the desired result.
	\end{proof}

	With these properties at hand, we can now establish the aforementioned stability regarding $g$.

	\begin{theorem}
		Let $(\delta_n)_n\subseteq\mathbb{R}$ be a positive sequence converging to $0$ and $(g_\delta^n)_n\subseteq \Y,g\in \Y$ such that $\norm[g-g_\delta^n]_{\Y}\leq\delta_n $. Denote with $f_n$ an arbitrary solution to $\min_{u\in L_2(\Omega)} \T_{g_\delta^n,\gamma,\sigma,\alpha}(u)$. Then, every weak accumulation point of $(f_n)_n$ is a minimizer of $\T_{g,\gamma_k,\sigma,\alpha}$ and there exists at least one such point.
	\end{theorem}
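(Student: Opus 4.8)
The plan is to follow the classical three-step stability argument for Tikhonov-type regularization, combining the weak lower semicontinuity from \Cref{lemma:pm-weakly-lower-semicont} with the uniform coercivity and consistency established in \Cref{prop:consistency}.

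First I would show that $(f_n)_n$ is bounded in $L_2(\Omega)$. Since each $f_n$ minimizes $\T_{g_\delta^n,\gamma,\sigma,\alpha}$, testing against the fixed competitor $f=0$ gives $\T_{g_\delta^n,\gamma,\sigma,\alpha}(f_n)\leq\T_{g_\delta^n,\gamma,\sigma,\alpha}(0)$, and consistency shows the right-hand side converges to $\T_{g,\gamma,\sigma,\alpha}(0)<\infty$, hence is bounded by some $B$ independent of $n$. If $(f_n)_n$ were unbounded, passing to a subsequence with $\norm[f_{n_j}]_{L_2(\Omega)}\to\infty$ would contradict the uniform coercivity \cref{eq:uniform-coercivity}, which forces $\T_{g_\delta^{n_j},\gamma,\sigma,\alpha}(f_{n_j})\to\infty$. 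Thus $(f_n)_n$ is bounded, and since $L_2(\Omega)$ is a Hilbert space, reflexivity yields at least one weakly convergent subsequence, settling the existence claim of the theorem.

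Next, let $f^*$ be any weak accumulation point, say $f_{n_j}\rightharpoonup f^*$. The crux is to upgrade the minimality of each $f_{n_j}$ for the \emph{moving} functional $\T_{g_\delta^{n_j},\gamma,\sigma,\alpha}$ into minimality of $f^*$ for the limit functional $\T_{g,\gamma,\sigma,\alpha}$. For an arbitrary competitor $h\in L_2(\Omega)$ I would chain the estimates
\begin{equation*}
\T_{g,\gamma,\sigma,\alpha}(f^*) \leq \liminf_j \T_{g,\gamma,\sigma,\alpha}(f_{n_j}) = \liminf_j \T_{g_\delta^{n_j},\gamma,\sigma,\alpha}(f_{n_j}) \leq \liminf_j \T_{g_\delta^{n_j},\gamma,\sigma,\alpha}(h) = \T_{g,\gamma,\sigma,\alpha}(h).
\end{equation*}
Here the first inequality is weak lower semicontinuity; the first equality uses that $(f_{n_j})_j$ stays in a ball of some radius $M$, so the consistency estimate \cref{eq:functional-consistency} makes the difference $\T_{g_\delta^{n_j},\gamma,\sigma,\alpha}(f_{n_j})-\T_{g,\gamma,\sigma,\alpha}(f_{n_j})$ vanish uniformly; the middle inequality is the minimality of $f_{n_j}$; and the final equality is consistency applied to the fixed argument $h$. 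As $h$ is arbitrary, $f^*$ minimizes $\T_{g,\gamma,\sigma,\alpha}$.

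The main obstacle — and precisely the reason \Cref{prop:consistency} is proved beforehand — lies in the second and fourth steps of this chain: because the data perturbation makes the functional itself depend on $n$, one cannot apply weak lower semicontinuity directly to the relevant sequence of functionals. Consistency is the exact device bridging the moving functionals $\T_{g_\delta^n,\gamma,\sigma,\alpha}$ and the target $\T_{g,\gamma,\sigma,\alpha}$, and its uniformity over norm-bounded sets is what permits its application to the bounded but non-constant sequence $(f_{n_j})_j$ rather than to a single fixed argument.
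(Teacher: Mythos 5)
Your proposal is correct and follows essentially the same route as the paper: boundedness and existence of a weak accumulation point via uniform coercivity together with minimality against a fixed competitor, then the chain combining weak lower semicontinuity (\Cref{lemma:pm-weakly-lower-semicont}), the uniformity of the consistency estimate over norm-bounded sets, and termwise minimality. The only cosmetic difference is that you compare against an arbitrary competitor $h$ instead of a fixed minimizer $\tilde{f}$ of $\T_{g,\gamma,\sigma,\alpha}$, which is a harmless (indeed slightly cleaner) reorganization of the same argument.
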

	\begin{proof} Since $(g_\delta^n)_n \subset \Y$ converges strongly to $g\in \mathcal{Y}$, we can apply \Cref{prop:consistency}.	
		For arbitrary $\tilde{f}\in L^2(\Omega)$ we have $\mathcal{T}_{g_\delta^n,(\gamma_k)_k,\alpha}(f_n)\leq \mathcal{T}_{g_\delta^n,(\gamma_k)_k,\alpha}(\tilde{f})$ and consequently 
		\begin{equation}
		\begin{split}
		\liminf\limits_{n\to\infty }\mathcal{T}_{g_\delta^n,(\gamma_k)_k,\alpha}(f_n)&\leq\limsup\limits_{n\to\infty }\mathcal{T}_{g_\delta^n,(\gamma_k)_k,\alpha}(f_n)\\
		&\leq\limsup\limits_{n\to\infty }\mathcal{T}_{g_\delta^n,(\gamma_k)_k,\alpha}(\tilde{f}) = \mathcal{T}_{g,(\gamma_k)_k,\alpha}(\tilde{f}),
		\label{eq:upperbound-lim-inf}
		\end{split}
		\end{equation}
		where we used \eqref{eq:functional-consistency} to obtain the last inequality. Due to \cref{eq:uniform-coercivity}, the sequence $(f_n)_n \subset L_2(\Omega)$ is bounded and thus has a weakly convergent subsequence. \par
		Considering such a weakly convergent subsequence $(f_{n_l})_l$ with weak limit $\hat{f}$ as well as a solution $\tilde{f}$  of $\min_{u\in L_2(\Omega)} \mathscr{T}_{g,\gamma,\sigma,\alpha}(u)$ we obtain with the help of the lower semi-continuity of the functional $\T$ in \Cref{lemma:pm-weakly-lower-semicont} 
		\begin{align*}				                
		\T_{g,\gamma,\sigma,\alpha}(\hat{f})
		&\leq \liminf\limits_{l\to\infty }
			\T_{g,\gamma,\sigma,\alpha}(f_{n_l})	\\
			&\leq\liminf\limits_{l\to\infty }\T_{g_\delta^{n_l},\gamma,\sigma,\alpha}(f_{n_l})+\lim\limits_{l\to\infty }\left(\T_{g,\gamma,\sigma,\alpha}(f_{n_l})-\T_{g_\delta^{n_l},\gamma,\sigma,\alpha}(f_{n_l})\right).
		\end{align*}
		The second limit tends to 0 due to \cref{eq:functional-consistency}. Note that $N_\varepsilon$ in \cref{eq:functional-consistency} depends only $\varepsilon$ and the bounding constant $M$. Finally from \cref{eq:upperbound-lim-inf} it follows that $\T_{g,\gamma,\sigma,\alpha}(\hat{f}) \leq \T_{g,\gamma,\sigma,\alpha}(\tilde{f})$ which ends the proof.
	\end{proof}	

	We will now establish convergence for a suitable parameter choice, which shall include that for zero sequences $(\gamma_{1,n})_n,...,(\gamma_{K,n})_n$, $(\alpha_n)_n$, the ratios $\frac{\gamma_{k,n}}{\gamma_{1,n}}=:\tilde{\gamma}_k$ and  $\frac{\alpha_n}{\gamma_{1,n}}=:\tilde{\alpha}$ stay fixed. We will call such sequences \textit{ratio preserving}. This allows us to simplify our penalty term
	\begin{equation*}
			\sum_{k=1}^K \frac{\gamma_{k,n}}{2}\int_{\Omega}p_{k,n}\left(\norm[\nabla_{\sigma_k} f]_2^2\right)\rmd x+\frac{\alpha_n}{2}\norm[f]_{L_2(\Omega)}^2
	\end{equation*}
	to $\tilde{\gamma}_n \mathscr{P}(f)$  with $\tilde{\gamma}_n:=\gamma_{1,n}$ and 
	\begin{equation*}
		\mathscr{P}(f):=\sum_{k=1}^{K}  \frac{\tilde{\gamma}_k}{2}\int_{\Omega}p_k\left(\left\|\nabla_{\sigma_k} f\right\|_2^2\right)+\frac{\tilde{\alpha}}{2}\norm[f]_{L_2(\Omega)}^2.
	\end{equation*}	
	With this notation we can now define more precisely the aforementioned generalization of minimal norm solutions: As in \cite{scherzer09}, we will consider $\P$-minimizing solutions, \textit{i.e.}  solutions of 
	\begin{align}
		\label{eq:reint}
		\min \mathscr{P}(f) \text{ s.t. } \A f=g \text{ for } g\in R(\A).
	\end{align}
	The reinterpretation of our problem $\min \frac12 \|\cdot\|^2 + \mathscr{P}(f)$ as \cref{eq:reint} is justified by the convergence \Cref{thm:convergence}, which mimics the pointwise convergence of regularization methods (towards the generalized inverse). 

	However, in order to prove that theorem, we first have to introduce the notation of level sets,
	\begin{align*}
		\mathcal{L}_{\T_{g_\delta,\gamma,\sigma,\alpha}}(f_0)&:= \left\{ f\in L_2(\Omega)\vert \T_{g_\delta,\gamma,\sigma,\alpha}(f)\leq\T_{g_\delta,\gamma,\sigma,\alpha}(f_0)\right\},\quad f_0\in L_2(\Omega),\\
		\mathcal{L}_{\T_{g_\delta,\gamma,\sigma,\alpha}}(M)&:= \left\{ f\in L_2(\Omega)\vert \T_{g_\delta,\gamma,\sigma,\alpha}(f)\leq M\right\},\quad M\in \mathbb{R}_0^+.
	\end{align*}
	\begin{proposition}
		\label{prop:weakly-seq-closed}
		Let $f_0\in L^2(\Omega)$, $M\in \mathbb{R}_0^+$.
		Then, the level sets $\mathcal{L}_{\T_{g_\delta,\gamma,\sigma,\alpha}}(f_0)$ and $\mathcal{L}_{\T_{g_\delta,\gamma,\sigma,\alpha}}(M)$ are weakly sequentially compact.
	\end{proposition}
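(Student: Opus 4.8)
The plan is to deduce weak sequential compactness of both level sets directly from the structural properties of $\T_{g_\delta,\gamma,\sigma,\alpha}$ already collected in \Cref{lemma:pm-weakly-lower-semicont} — namely coercivity and weak lower semicontinuity — together with the fact that $L_2(\Omega)$ is a Hilbert space and hence reflexive. Since $\mathcal{L}_{\T_{g_\delta,\gamma,\sigma,\alpha}}(f_0)$ is precisely the sublevel set $\mathcal{L}_{\T_{g_\delta,\gamma,\sigma,\alpha}}(M)$ with $M=\T_{g_\delta,\gamma,\sigma,\alpha}(f_0)$, it suffices to argue for an arbitrary height $M\in\mathbb{R}_0^+$.

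First I would establish that the sublevel set is norm-bounded. If it were not, there would exist $(f_n)_n\subseteq\mathcal{L}_{\T_{g_\delta,\gamma,\sigma,\alpha}}(M)$ with $\norm[f_n]_{L_2(\Omega)}\to\infty$; coercivity from \Cref{lemma:pm-weakly-lower-semicont} would then force $\T_{g_\delta,\gamma,\sigma,\alpha}(f_n)\to\infty$, contradicting $\T_{g_\delta,\gamma,\sigma,\alpha}(f_n)\leq M$. In fact the estimate is explicit: since the data-fit and penalty summands are nonnegative, $\frac{\alpha}{2}\norm[f]_{L_2(\Omega)}^2\leq\T_{g_\delta,\gamma,\sigma,\alpha}(f)\leq M$ yields $\norm[f]_{L_2(\Omega)}\leq\sqrt{2M/\alpha}$ for every $f$ in the level set.

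Next I would extract a weak subsequence and control its limit. Given an arbitrary sequence $(f_n)_n$ in the level set, the boundedness just shown together with the reflexivity of $L_2(\Omega)$ (Banach--Alaoglu / Eberlein--\v{S}mulian) provides a subsequence $(f_{n_l})_l$ with $f_{n_l}\rightharpoonup\hat f$ for some $\hat f\in L_2(\Omega)$. Invoking the weak lower semicontinuity of $\T_{g_\delta,\gamma,\sigma,\alpha}$ from \Cref{lemma:pm-weakly-lower-semicont} gives
\begin{equation*}
\T_{g_\delta,\gamma,\sigma,\alpha}(\hat f)\leq\liminf_{l\to\infty}\T_{g_\delta,\gamma,\sigma,\alpha}(f_{n_l})\leq M,
\end{equation*}
so that $\hat f\in\mathcal{L}_{\T_{g_\delta,\gamma,\sigma,\alpha}}(M)$. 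This is exactly the definition of weak sequential compactness, and the case of $\mathcal{L}_{\T_{g_\delta,\gamma,\sigma,\alpha}}(f_0)$ follows by the specialization noted above.

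The argument is essentially routine, so there is no genuinely hard step; the only point requiring care is conceptual bookkeeping, namely that sequential compactness decomposes into two ingredients — existence of a weak cluster point (supplied by coercivity, which bounds the set, plus reflexivity) and membership of that cluster point in the set (supplied by weak lower semicontinuity, which is what makes the sublevel set weakly sequentially closed). Matching each property from \Cref{lemma:pm-weakly-lower-semicont} to its role is the substance of the proof.
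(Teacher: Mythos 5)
Your proof is correct and follows essentially the same route as the paper's: reduce $\mathcal{L}_{\T_{g_\delta,\gamma,\sigma,\alpha}}(f_0)$ to the case $M:=\T_{g_\delta,\gamma,\sigma,\alpha}(f_0)$, use coercivity from \Cref{lemma:pm-weakly-lower-semicont} to bound the sublevel set, extract a weakly convergent subsequence by reflexivity of $L_2(\Omega)$, and invoke weak lower semicontinuity to show the weak limit remains in the set. Your explicit bound $\norm[f]_{L_2(\Omega)}\leq\sqrt{2M/\alpha}$ is a nice concrete refinement, but the argument is the same.
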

	\begin{proof}
		We only consider $\mathcal{L}_{\T_{g_\delta,\gamma,\sigma,\alpha}}(M)$, the other case follows by setting $M:=\T_{g_\delta,\gamma,\sigma,\alpha}(f_0)$.
		The coercivity of $\T_{g_\delta,\gamma,\sigma,\alpha}$ implies that every sequence in $\mathcal{L}_{\T_{g_\delta,\gamma,\sigma,\alpha}}(M)$ is bounded. In particular, every sequence $\mathcal{L}_{\T_{g_\delta,\gamma,\sigma,\alpha}}(M)$ has at least one subsequence which admits a weak limit $f^*$. The lower semicontinuity of our functional now implies $f^*\in\mathcal{L}_{\T_{g_\delta,\gamma,\sigma,\alpha}}(M)$.	
	\end{proof}

    With this property, all items in [\cite{scherzer09}, assumption 3.13] except convexity hold. Hence we can give the following theorem, which is a special case of [\cite{scherzer09}, Theorem 3.26] in which convexity is not required. 

	\begin{theorem}
		\label{thm:convergence}
		Let $f\in L_2(\Omega)$, $g\in \Y$ s.t. $ \A f=g$. Consider sequences  $(f_n)_n\subseteq L_2(\Omega)$, $(g_\delta^n)_n\subseteq \Y$, zero sequences $(\delta_n)_n$, $(\gamma_{k,n})_n$ for $k=1,...,K$, $(\alpha_n)_n\subseteq \mathbb{R}^{+}$ such that
		\begin{enumerate}[label =(\roman*)]
		    \item $\delta_n\to0$ and  $\norm[g-g_\delta^n]_{\Y}\leq \delta_n$ for all $n$,
			\item $(\gamma_{k,n})_n$ and $(\alpha_n)_n$ are ratio preserving in the previous sense,
			\item $\tilde{\gamma}_n:=\gamma_{1,n}$ satisfies $\lim_{n\to\infty} \tilde{\gamma}_{n} = \lim_{n\to\infty} \frac{\delta_n^2}{\tilde{\gamma}_{n}}=0$,
			\item $f_n\in\argmin \T_{g_\delta^n,\gamma_{n},\alpha_n}$.
		\end{enumerate}
		Then the following statements hold:
		\begin{enumerate}[label= \arabic*)]
			\item There is at least one (weak) accumulation point $f_*$ of $(f_n)_n$ and any such $f_*$ solves $\min_u \mathscr{P}(u)\textnormal{ s.t } \A u=g$.
			\item Any weakly convergent subsequence $(f_{n_l})_l$ satisfies
			\begin{equation}
			    \label{eq:reg-term-convergence}
				\lim\limits_{l\to\infty}\mathscr{P}(f_{n_l})=\mathscr{P}(f_*),
			\end{equation}
			where $f_*$ is an arbitrary accumulation point of $(f_n)_n$.
			\item If the solution $f_*$ of $\min_{u} \mathscr{P}(u)\textnormal{ s.t. } \A u=g$ is unique, then $f_n\rightharpoonup f_*$.
		\end{enumerate}
	\end{theorem}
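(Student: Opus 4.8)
The plan is to run the classical convergence argument for Tikhonov-type regularization on the reparametrized functional $\T_{g_\delta^n,\gamma_n,\alpha_n}(f)=\tfrac12\norm[\A f-g_\delta^n]_{\Y}^2+\tilde{\gamma}_n\,\P(f)$, exploiting that the non-convexity of the $p_k$ is harmless here: the only structural properties the argument consumes are the weak lower semicontinuity and the coercivity of $\P$, both already secured by \Cref{lemma:pm-weakly-lower-semicont} and the coercive $\tilde{\alpha}$-term, never convexity. Concretely, I would first fix a $\P$-minimizing solution $f^{\dagger}$ of $\min\P(u)$ s.t. $\A u=g$; its existence is not an extra hypothesis but follows by the direct method, since the constraint set $\{u:\A u=g\}$ is nonempty (as $g\in R(\A)$), being the preimage of $\{g\}$ under the continuous linear map $\A$ it is a closed affine, hence weakly closed, subspace, and $\P$ is coercive and weakly lower semicontinuous on it.

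Next I would compare the minimizer $f_n$ against $f^{\dagger}$. Using optimality of $f_n$ together with $\norm[\A f^{\dagger}-g_\delta^n]_{\Y}\leq\delta_n$ gives
\[
\tfrac12\norm[\A f_n-g_\delta^n]_{\Y}^2+\tilde{\gamma}_n\,\P(f_n)\leq\tfrac12\delta_n^2+\tilde{\gamma}_n\,\P(f^{\dagger}).
\]
Dividing the regularization part by $\tilde{\gamma}_n$ and invoking condition (iii) yields $\limsup_n\P(f_n)\leq\P(f^{\dagger})$, while the residual part together with $\delta_n\to0$ and $\tilde{\gamma}_n\to0$ shows $\norm[\A f_n-g]_{\Y}\to0$. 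Coercivity of $\P$ then bounds $(\norm[f_n]_{L_2(\Omega)})_n$ (equivalently, one may invoke the weak sequential compactness of the level sets from \Cref{prop:weakly-seq-closed}), so a weak accumulation point $f_*$ exists, giving the existence claim in 1).

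For any subsequence $f_{n_l}\rightharpoonup f_*$, continuity of $\A$ turns the strong convergence $\A f_{n_l}\to g$ into $\A f_*=g$ by uniqueness of weak limits, so $f_*$ is feasible. Weak lower semicontinuity of $\P$ combined with the $\limsup$ bound then produces the squeeze
\[
\P(f_*)\leq\liminf_{l}\P(f_{n_l})\leq\limsup_{l}\P(f_{n_l})\leq\P(f^{\dagger})=\P(f_*),
\]
where the final equality uses minimality of $f^{\dagger}$ against the feasible $f_*$. This simultaneously shows $f_*$ is itself $\P$-minimizing (completing 1)) and that $\lim_l\P(f_{n_l})=\P(f_*)$ (proving 2)). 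For 3), if the $\P$-minimizer is unique then every weakly convergent subsequence of the bounded sequence $(f_n)_n$ must have the same limit $f_*$; a standard subsequence-of-subsequence argument then upgrades this to $f_n\rightharpoonup f_*$.

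The step I expect to carry the weight is confirming that convexity is genuinely dispensable. Since [\cite{scherzer09}, assumption 3.13] and [\cite{scherzer09}, Theorem 3.26] list convexity among their hypotheses, the essential task is to check that the part of that argument yielding these three conclusions invokes only weak lower semicontinuity and coercivity — and that, in our setting, the \emph{weak continuity} of the $p_k$-integrals established in \Cref{lemma:pm-weakly-lower-semicont} via the compact smoothing $\nabla_{\sigma_k}$ performs the limit passage that convexity would otherwise supply. Everything else, namely the comparison estimate, the parameter-choice bookkeeping in (iii), and the squeeze, is routine.
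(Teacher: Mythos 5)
Your proof is correct, and it is in substance the argument that the paper points to rather than writes out: the paper's own treatment of \Cref{thm:convergence} consists of proving weak sequential compactness of level sets (\Cref{prop:weakly-seq-closed}), observing that all items of [\cite{scherzer09}, Assumption 3.13] except convexity then hold, and invoking [\cite{scherzer09}, Theorem 3.26] together with the (unproved) claim that convexity is not needed for these three conclusions. You supply precisely the classical Tikhonov-convergence argument underlying that citation: existence of a $\P$-minimizing solution $f^\dagger$ by the direct method (the constraint set $\{u : \A u = g\}$ is a nonempty, closed affine and hence weakly closed set, and $\P$ is coercive and weakly lower semicontinuous by the argument of \Cref{lemma:pm-weakly-lower-semicont}), the comparison estimate against $f^\dagger$, the resulting bounds $\limsup_n \P(f_n)\leq\P(f^\dagger)$ and $\norm[\A f_n - g]_\Y\to 0$ from condition (iii), the squeeze via weak lower semicontinuity giving both 1) and 2), and the subsequence-of-subsequences upgrade for 3). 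This buys the reader a self-contained verification of the assertion the paper leaves to the reader, namely that the only structural properties consumed are weak lower semicontinuity and coercivity, with the weak continuity of the smoothed $p_k$-integrals doing the work convexity would otherwise do. One minor imprecision: your parenthetical claim that boundedness of $(f_n)_n$ could ``equivalently'' be obtained from \Cref{prop:weakly-seq-closed} does not work as stated, because those level sets belong to a single functional $\T_{g_\delta,\gamma,\sigma,\alpha}$ with fixed data and parameters, whereas here both $g_\delta^n$ and $(\gamma_{k,n},\alpha_n)$ vary with $n$; your primary argument --- boundedness of $\P(f_n)$ combined with coercivity of $\P$ through the $\tilde{\alpha}$-term --- is the correct one and suffices.
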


	Until now, we have only considered weak convergence. This stems from the fact that in infinite dimensions, even bounded sequences do not have necessarily (strongly) convergent subsequences. Of course one could always assume, that the sequences converge strongly, but this only reduces the generality of the framework. In this context, our $\norm[\cdot]_{L^2(\Omega)}$ introduced to enforce coercivity has an additional side effect regarding the last Theorem. In fact, we actually can replace the weak convergence of subsequences by strong convergence, as the following corollary shows.
	\begin{corollary}
		The weakly convergent subsequences $(f_{n_l})_l$ in \Cref{thm:convergence} are also strongly convergent.
	\end{corollary}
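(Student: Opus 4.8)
The plan is to exploit that $L_2(\Omega)$ is a Hilbert space, where a weakly convergent sequence whose norms converge to the norm of the weak limit is automatically strongly convergent. Indeed, for a weakly convergent subsequence $f_{n_l}\rightharpoonup f_*$ one has
\[
\norm[f_{n_l}-f_*]_{L_2(\Omega)}^2 = \norm[f_{n_l}]_{L_2(\Omega)}^2 - 2\,\inner{f_{n_l}}{f_*}_{L_2(\Omega)} + \norm[f_*]_{L_2(\Omega)}^2,
\]
and the cross term $\inner{f_{n_l}}{f_*}_{L_2(\Omega)}$ tends to $\norm[f_*]_{L_2(\Omega)}^2$ by weak convergence. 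Hence the whole task reduces to establishing the norm convergence $\norm[f_{n_l}]_{L_2(\Omega)}\to\norm[f_*]_{L_2(\Omega)}$.

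To obtain this, I would isolate the coercivity term inside $\mathscr{P}$. By item 2) of \Cref{thm:convergence}, namely \cref{eq:reg-term-convergence}, we already know that $\mathscr{P}(f_{n_l})\to\mathscr{P}(f_*)$, where
\[
\mathscr{P}(f) = \sum_{k=1}^{K} \frac{\tilde{\gamma}_k}{2}\int_{\Omega} p_k\!\left(\norm[\nabla_{\sigma_k} f]_2^2\right)\rmd x + \frac{\tilde{\alpha}}{2}\norm[f]_{L_2(\Omega)}^2.
\]
For each $k$ the diffusion integral is stable under weak convergence, by exactly the argument used for \cref{eq:strong-conv} in the proof of \Cref{lemma:pm-weakly-lower-semicont}: the compactness of $\nabla_{\sigma_k}\colon L_2(\Omega)\to C^1(\Omega)$ turns $f_{n_l}\rightharpoonup f_*$ into uniform convergence $\nabla_{\sigma_k} f_{n_l}\to\nabla_{\sigma_k} f_*$, and the local Lipschitz continuity of $p_k$ then yields $\int_{\Omega} p_k(\norm[\nabla_{\sigma_k} f_{n_l}]_2^2)\rmd x\to\int_{\Omega} p_k(\norm[\nabla_{\sigma_k} f_*]_2^2)\rmd x$.

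Subtracting these $K$ convergent quantities from the convergence $\mathscr{P}(f_{n_l})\to\mathscr{P}(f_*)$ leaves $\frac{\tilde{\alpha}}{2}\norm[f_{n_l}]_{L_2(\Omega)}^2\to\frac{\tilde{\alpha}}{2}\norm[f_*]_{L_2(\Omega)}^2$. Dividing by the strictly positive weight $\tilde{\alpha}/2$ gives $\norm[f_{n_l}]_{L_2(\Omega)}^2\to\norm[f_*]_{L_2(\Omega)}^2$, hence the norm convergence needed in the first paragraph, and strong convergence follows.

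The only point requiring care is that one may divide out the coercivity weight, i.e.\ that $\tilde{\alpha}>0$; this holds because $\tilde{\alpha}=\alpha_n/\gamma_{1,n}$ is kept fixed and positive by the ratio-preserving assumption of \Cref{thm:convergence}. Beyond this, no genuinely new difficulty arises: the argument reuses entirely the compactness-plus-Lipschitz machinery already established for \Cref{lemma:pm-weakly-lower-semicont}, so the corollary is essentially a consequence of the quadratic term $\frac{\tilde{\alpha}}{2}\norm[f]_{L_2(\Omega)}^2$ being a part of the regularization functional whose value is shown to converge.
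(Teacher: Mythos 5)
Your proof is correct and follows exactly the paper's own route: the paper likewise combines \cref{eq:reg-term-convergence} with the uniform-convergence argument behind \cref{eq:strong-conv} to extract $\norm[f_{n_l}]_{L_2(\Omega)}\to\norm[f_*]_{L_2(\Omega)}$, and then concludes strong convergence from weak convergence plus norm convergence in the Hilbert space $L_2(\Omega)$. You have merely spelled out the details the paper leaves implicit (the Radon--Riesz expansion, the subtraction of the diffusion integrals, and the positivity of $\tilde{\alpha}$), all of which are accurate.
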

	\begin{proof}
		Let $f_*$ s.t. $f_{n_l}\rightharpoonup f_*$.
		Then \cref{eq:strong-conv} and \cref{eq:reg-term-convergence} imply $\norm[f_{n_l}]_{L_2(\Omega)}\to\norm[f_*]_{L_2(\Omega)}$. 
	\end{proof}
	In this section we have shown that we can get stable solutions associated to the ill-posed problem $\mathscr{A}f=g$ by minimizing a general NID-functional \eqref{eq:def_NID}. 
	The construction of such solutions by gradient descent is worked out in the following section.

\section{A gradient descent method for the NID model}
	\label{subsec:gradient-descent-for-pm-reg}
	In this section we will propose and discuss an algorithm to solve the optimization problem (\ref{eq:def_NID}) and examine its convergence. We first show that under certain conditions it is suitable for gradient methods.
	Similar to the finite dimensional case, we must ensure continuity conditions. 
	
	\begin{theorem}		
		\label{thm:continuity-derivative}
		Let $g\in \Y$, $\sigma_k\in\mathbb{R}^+$, $\gamma_k\in\mathbb{R}^+_0$ for $k=1,\dots,K$,		$\alpha\in\mathbb{R}$ and consider let $\T_{g,\gamma,\sigma,\alpha}$
		be as in \cref{eq:def_NID}. If, in addition to the previous assumptions, the penalty terms $p_k \in C_{+,b}^3(\mathbb{R}_0^+)$, $k=1,\dots,K$, then the following properties hold: 
		\begin{enumerate}[label= \arabic*)]
			\item the Fr\'echet derivative of $\T_{g,\gamma,\sigma,\alpha}$ is given by
			\begin{equation}
				\label{eq:frech-deriv-func}
				\T_{g,\gamma,\sigma,\alpha}'(f)=
				\A^*( \A f-g)-\sum_{k=1}^{K}\gamma_k(\nabla_{\sigma_k})^*\left(\varphi_k\left(\norm[\nabla_{\sigma_k} f]_2^2\right)\nabla_{\sigma_k} f\right)+\alpha f.
			\end{equation}
			\item $\T_{g,\gamma,\sigma,\alpha}'$ is Lipschitz continuous, i.e.  for all  $f_1,f_2\in L_2(\Omega)$ there exists some $\kappa>0$ such that 
		\begin{equation}
				\label{eq:frechet-lipschitz}
				\norm[\T_{g,\gamma,\sigma,\alpha}'(f_1)-\T_{g,\gamma,\sigma,\alpha}'(f_2)]_{\mathcal{L}(L_2(\Omega),\mathbb{R})}\leq \kappa\norm[f_1-f_2]_{L_2(\Omega)}.
			\end{equation}
			\item Let $f_n \rightharpoonup f$, then the functions $\left(\T_{g,\gamma,\sigma,\alpha}'(f_n)\right)_n\subseteq L_2(\Omega),\T_{g,\gamma,\sigma,\alpha}'(f)\in L_2(\Omega)$ satisfy:
			\begin{equation}
			    \label{eq:weak-weak-continuity}
                \T_{g,\gamma,\sigma,\alpha}'(f_n) \rightharpoonup \T_{g,\gamma,\sigma,\alpha}'(f)
			\end{equation}		
		\end{enumerate}
	\end{theorem}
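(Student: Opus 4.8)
The plan is to treat the three summands of $\T_{g,\gamma,\sigma,\alpha}$ in \cref{eq:def_NID} separately for each of the three assertions. For the derivative in 1), the data term $\tfrac12\norm[\A f-g]_\Y^2$ and the penalty $\tfrac\alpha2\norm[f]_{L_2(\Omega)}^2$ are classical, with derivatives $\A^*(\A f-g)$ and $\alpha f$. For the NID term I would first compute the Gateaux derivative in a direction $h$: expanding $\norm[\nabla_{\sigma_k}(f+th)]_2^2=\norm[\nabla_{\sigma_k}f]_2^2+2t\inner{\nabla_{\sigma_k}f}{\nabla_{\sigma_k}h}+t^2\norm[\nabla_{\sigma_k}h]_2^2$ pointwise and differentiating under the integral at $t=0$ --- justified by dominated convergence since $\varphi_k=p_k'$ is bounded and $\nabla_{\sigma_k}f,\nabla_{\sigma_k}h\in L_\infty$ --- produces the functional $h\mapsto\inner{\varphi_k(\norm[\nabla_{\sigma_k}f]_2^2)\nabla_{\sigma_k}f}{\nabla_{\sigma_k}h}_{L_2}$. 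Moving $\nabla_{\sigma_k}$ onto its Hilbert adjoint $(\nabla_{\sigma_k})^*$ rewrites this as testing $h$ against $(\nabla_{\sigma_k})^*(\varphi_k(\norm[\nabla_{\sigma_k}f]_2^2)\nabla_{\sigma_k}f)$, i.e. the middle term of \eqref{eq:frech-deriv-func}. To upgrade the Gateaux to a Fréchet derivative I would invoke that the candidate map $f\mapsto\T_{g,\gamma,\sigma,\alpha}'(f)$ is continuous --- which is implied by the Lipschitz estimate of 2) --- so that Gateaux differentiability with continuous derivative yields Fréchet differentiability.

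For the Lipschitz property in 2) I would again split off the linear parts: $\A^*\A+\alpha\,\mathrm{Id}$ is globally Lipschitz with constant $\norm[\A]^2+\alpha$, so it remains to bound each nonlinear map $\Phi_k(f):=(\nabla_{\sigma_k})^*(\varphi_k(\norm[\nabla_{\sigma_k}f]_2^2)\nabla_{\sigma_k}f)$. As $(\nabla_{\sigma_k})^*$ is bounded, it suffices to estimate $\norm[N_k(\nabla_{\sigma_k}f_1)-N_k(\nabla_{\sigma_k}f_2)]_{L_2}$ for the pointwise nonlinearity $N_k(v):=\varphi_k(\norm[v]_2^2)\,v$, whose Jacobian is $DN_k(v)=\varphi_k(\norm[v]_2^2)I+2\varphi_k'(\norm[v]_2^2)vv^\top$. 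The mean value theorem then bounds the difference by $\sup\norm[DN_k]\cdot\norm[\nabla_{\sigma_k}(f_1-f_2)]_{L_2}$. The decisive structural input is the smoothing nature of $\nabla_{\sigma_k}=\nabla G_{\sigma_k}\ast(\cdot)$: Cauchy--Schwarz gives $\norm[\nabla_{\sigma_k}f]_{L_\infty}\le\norm[\nabla G_{\sigma_k}]_{L_2}\norm[f]_{L_2}$, so the arguments at which $\varphi_k$ and $\varphi_k'$ are evaluated stay in a bounded interval controlled by $\norm[f_1]_{L_2},\norm[f_2]_{L_2}$, where $\varphi_k=p_k'$ and $\varphi_k'=p_k''$ are bounded thanks to $p_k\in C_{+,b}^3(\mathbb{R}_0^+)$, yielding a finite $\kappa$. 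The hard part will be precisely this uniform control: since $\norm[\nabla_{\sigma_k}f]_2^2$ ranges a priori over an unbounded set, a Lipschitz constant independent of $f_1,f_2$ does not follow from boundedness of $p_k''$ alone, and it is the $L_\infty$ gradient bound from the Gaussian smoothing that rescues the estimate by confining the relevant arguments to a compact range (so that $\kappa$ may be taken uniform on bounded sets, which is all the descent scheme needs).

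For the weak-to-weak continuity in 3), the linear terms are handled by weak continuity of bounded operators: $f_n\rightharpoonup f$ gives $\A^*\A f_n\rightharpoonup\A^*\A f$ and $\alpha f_n\rightharpoonup\alpha f$. The nonlinear terms in fact converge strongly, which is where the compactness of $\nabla_{\sigma_k}:L_2(\Omega)\to C^1(\Omega)$ --- already exploited in \Cref{lemma:pm-weakly-lower-semicont} --- is used: a compact operator sends the weakly convergent $(f_n)_n$ to a norm-convergent sequence, so $\nabla_{\sigma_k}f_n\to\nabla_{\sigma_k}f$ uniformly. Continuity of $v\mapsto\varphi_k(\norm[v]_2^2)v$ then yields $\varphi_k(\norm[\nabla_{\sigma_k}f_n]_2^2)\nabla_{\sigma_k}f_n\to\varphi_k(\norm[\nabla_{\sigma_k}f]_2^2)\nabla_{\sigma_k}f$ strongly in $L_2$, and the bounded operator $(\nabla_{\sigma_k})^*$ preserves strong convergence. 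Adding the strongly convergent nonlinear contributions to the weakly convergent linear ones gives \eqref{eq:weak-weak-continuity}.
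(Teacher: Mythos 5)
Your parts 1) and 3) are sound. Part 3) is essentially the paper's own argument: compactness of $\nabla_{\sigma_k}:L_2(\Omega)\to C^1(\Omega)$ turns weak convergence of $(f_n)_n$ into uniform convergence of $(\nabla_{\sigma_k}f_n)_n$, the nonlinear contributions then converge in norm, and the linear terms converge weakly. Part 1) takes a genuinely different route: the paper never passes through Gateaux differentiability but proves the Fr\'echet property directly, via a second-order Taylor estimate for $\phi(\xi)=\tfrac12 p_k(\norm[\xi]_2^2)$ with a remainder constant \emph{independent of the base point} (cf. \cref{eq:taylor-ineq}), obtained from boundedness of the Hessian of $\phi$. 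Your alternative (differentiation under the integral to get the Gateaux derivative, then upgrading to Fr\'echet via continuity of the derivative map) is legitimate, and since Fr\'echet differentiability at a fixed $f$ only needs continuity of the candidate derivative near $f$, the fact that your part 2) yields only a locally uniform constant does not make the argument circular.

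The genuine discrepancy is in part 2). As intended -- and as consumed later, e.g. in the step-size bound $t_n\geq(1-\varrho)/\kappa$ of \cref{eq:bounded_tn} -- the theorem asserts a single Lipschitz constant $\kappa$ valid on all of $L_2(\Omega)$. The paper obtains this global constant exactly from the point you doubt: it asserts that the Hessian $\varphi_k(\norm[v]_2^2)I+2\varphi_k'(\norm[v]_2^2)vv^\top$ of $\phi$ is bounded on all of $\mathbb{R}^2$, which gives the base-point-independent estimate \cref{eq:cont-eq2} and hence a global $\kappa$ after composing with the bounded operators $\nabla_{\sigma_k}$ and $(\nabla_{\sigma_k})^*$. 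Your proof, which instead confines the arguments of $\varphi_k,\varphi_k'$ through $\norm[\nabla_{\sigma_k}f]_{L_\infty}\leq\norm[\nabla G_{\sigma_k}]_{L_2}\norm[f]_{L_2}$, only delivers Lipschitz continuity on bounded sets, a strictly weaker conclusion; as a proof of the theorem as stated it is therefore incomplete. That said, your underlying objection is mathematically well-founded: global boundedness of the Hessian requires $s\mapsto s\,p_k''(s)$ to be bounded, which is not a formal consequence of $p_k\in C^3_{+,b}(\mathbb{R}_0^+)$; it does hold for the Perona--Malik-type penalties the paper uses (flux decay), and it is what the paper's one-line claim about the chain rule implicitly relies on. To close your proof you should either verify (or impose) this decay and then run the uniform Taylor estimate to get a global $\kappa$, or keep your bounded-set constant and additionally repair the downstream uses -- possible, since the iterates of \Cref{alg:alg-final} remain in the bounded level set $\mathcal{L}_{\T_{g,\gamma,\sigma,\alpha}}(f_0)$ -- but the latter changes the statement rather than proving it.
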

	\begin{proof}
	    See \Cref{proof:continuity-derivative}.
	\end{proof}
	\begin{corollary}
		\label{corollary:lipschitz-cont}
		The mappings $\T_{g,\gamma,\sigma,\alpha}:L_2(\Omega)\to\mathbb{R}$ and $\T'_{g,\gamma,\sigma,\alpha}:L_2(\Omega)\to \mathcal{L}\left(L_2(\Omega),\mathbb{R}\right)$, defined in \Cref{thm:continuity-derivative}, map bounded sets onto bounded sets. In particular, $\T_{g,\gamma,\sigma,\alpha}$ is Lipschitz continuous on bounded sets.
	\end{corollary}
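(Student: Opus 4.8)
The plan is to read off all three assertions from the properties already assembled in \Cref{thm:continuity-derivative} together with \Cref{lemma:pm-weakly-lower-semicont}, so that no genuinely new estimate is required. Throughout, fix a bounded set $B\subseteq L_2(\Omega)$ and set $M:=\sup_{f\in B}\norm[f]_{L_2(\Omega)}<\infty$.

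First I would dispatch the derivative, which is immediate from the global Lipschitz bound \eqref{eq:frechet-lipschitz}. Denoting the Lipschitz constant by $\kappa$, the triangle inequality gives, for every $f\in B$,
\[
\norm[\T_{g,\gamma,\sigma,\alpha}'(f)]_{\mathcal{L}(L_2(\Omega),\mathbb{R})}
\leq \norm[\T_{g,\gamma,\sigma,\alpha}'(0)]_{\mathcal{L}(L_2(\Omega),\mathbb{R})} + \kappa\,\norm[f]_{L_2(\Omega)}
\leq \norm[\T_{g,\gamma,\sigma,\alpha}'(0)]_{\mathcal{L}(L_2(\Omega),\mathbb{R})} + \kappa M,
\]
where $\T_{g,\gamma,\sigma,\alpha}'(0)=-\A^*g$ is a fixed finite element by \eqref{eq:frech-deriv-func}. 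Hence $\T_{g,\gamma,\sigma,\alpha}'(B)$ is bounded in $\mathcal{L}(L_2(\Omega),\mathbb{R})$.

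Next I would bound the functional itself by inspecting the three summands of \eqref{eq:def_NID} separately. The assumption $p_k\in C_{+,b}^1(\R_0^+)$ entails in particular that each $p_k$ is bounded on $\R_0^+$, so $\Reg_{NID}$ is bounded on \emph{all} of $L_2(\Omega)$ by $\sum_{k=1}^K\gamma_k\,\norm[p_k]_\infty\,|\Omega|$. The data term obeys $\tfrac12\norm[\A f-g]_\Y^2\leq \tfrac12(\norm[\A]\,M+\norm[g]_\Y)^2$ and the coercive term obeys $\tfrac{\alpha}{2}\norm[f]_{L_2(\Omega)}^2\leq \tfrac{\alpha}{2}M^2$ for all $f\in B$. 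Combined with the lower bound supplied by \Cref{lemma:pm-weakly-lower-semicont}, this shows $\T_{g,\gamma,\sigma,\alpha}(B)$ is bounded.

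Finally, to obtain Lipschitz continuity of $\T_{g,\gamma,\sigma,\alpha}$ on $B$, I would invoke the fundamental theorem of calculus for Fr\'echet derivatives. Given $f_1,f_2\in B$, the segment $\{f_2+t(f_1-f_2):t\in[0,1]\}$ lies in the closed ball $\overline{B}(0,M)$, which is convex and bounded; by the first step $\T_{g,\gamma,\sigma,\alpha}'$ is bounded on $\overline{B}(0,M)$ by some constant $C_M$. Writing
\[
\T_{g,\gamma,\sigma,\alpha}(f_1)-\T_{g,\gamma,\sigma,\alpha}(f_2)
= \int_0^1 \T_{g,\gamma,\sigma,\alpha}'\bigl(f_2+t(f_1-f_2)\bigr)(f_1-f_2)\,\rmd t
\]
and estimating the integrand by $C_M\norm[f_1-f_2]_{L_2(\Omega)}$ yields $|\T_{g,\gamma,\sigma,\alpha}(f_1)-\T_{g,\gamma,\sigma,\alpha}(f_2)|\leq C_M\norm[f_1-f_2]_{L_2(\Omega)}$, the claimed Lipschitz bound on $B$. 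I do not expect a real obstacle here; the only two points demanding a moment of care are that it is boundedness of $p_k$ itself (not merely of its derivative) which the space $C_{+,b}^1$ guarantees, and that one must integrate $\T_{g,\gamma,\sigma,\alpha}'$ along a segment contained in a convex bounded superset of $B$ rather than over $B$ directly.
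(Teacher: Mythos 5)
Your proposal is correct, and two of its three steps coincide with the paper's argument: the boundedness of $\T'_{g,\gamma,\sigma,\alpha}$ on bounded sets is read off from the Lipschitz estimate \eqref{eq:frechet-lipschitz} (the paper states this without writing out your triangle inequality through $\T'_{g,\gamma,\sigma,\alpha}(0)=-\A^*g$, but it is the same observation), and the Lipschitz continuity of $\T_{g,\gamma,\sigma,\alpha}$ on bounded sets is obtained from a mean-value-type estimate along the segment joining $f_1$ and $f_2$, exactly as in the paper's inequality \eqref{eq:mean-val}. Where you diverge is the third claim: the paper deduces boundedness of $\T_{g,\gamma,\sigma,\alpha}$ on bounded sets as a \emph{consequence} of its Lipschitz continuity, whereas you prove it directly, term by term, using that $p_k\in C^{m}_{+,b}$ forces $p_k$ itself to be bounded, so that $\Reg_{NID}$ is in fact bounded on \emph{all} of $L_2(\Omega)$ by $\sum_k\gamma_k\norm[p_k]_\infty|\Omega|$. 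Your variant is slightly more informative (it exhibits a global bound on the regularizer that does not depend on the differentiability machinery at all, only on $p_k\in C^1_{+,b}$ and $\Omega$ bounded), while the paper's is more economical, getting boundedness for free once Lipschitz continuity is in hand. A small additional point in your favour: your observation that the segment $\{f_2+t(f_1-f_2)\}$ stays in the ball $\overline{B}(0,M)$ by convexity is sharper than the paper's cruder bound $\norm[f_1+t(f_2-f_1)]_{L_2(\Omega)}\leq 3M$; both suffice, of course.
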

	\begin{proof}
		The assertion regarding $\T'_{g,\gamma,\sigma,\alpha}$ follows directly from the Lipschitz continuity of the Fr\'echet derivative. 
    From the mean value theorem we obtain for $f_1,f_2\in L_2(\Omega)$ the inequality
\begin{align}
    \label{eq:mean-val}
    &\left|\T_{g,\gamma,\sigma,\alpha}(f_2)-\T_{g,\gamma,\sigma,\alpha}(f_1)\right|\notag\\\leq &\sup_{t\in[0,1]}\norm[\T'_{g,\gamma,\sigma,\alpha}(f_1+t(f_2-f_1))]_{\mathcal{L}(L_2(\Omega),\mathbb{R})}\norm[f_2-f_1]_{L^2(\Omega)}.
\end{align}
Now let $f_1,f_2$ be arbitrary elements of a subset of $L_2(\Omega)$ bounded by some constant $M>0$. Since $t\in[0,1]$, we obtain
\begin{align*}
    \norm[f_1+t(f_2-f_1)]_{L_2(\Omega)}&\leq \norm[f_1]_{L_2(\Omega)}+t\norm[f_2-f_1]_{L_2(\Omega)}\\
    &\leq 2\norm[f_1]_{L_2(\Omega)}+\norm[f_2]_{L_2(\Omega)}\leq 3M,
\end{align*}
which combined with the boundedness of $\T'_{g,\gamma,\sigma,\alpha}$ and \cref{eq:mean-val} implies Lipschitz continuity of $\T_{g,\gamma,\sigma,\alpha}$ on bounded sets and therefore boundedness of $\T_{g,\gamma,\sigma,\alpha}$ on such sets. 

	\end{proof}
	Using basic properties of the Fr\'{e}chet-derivative 
	as well as $-\divergence =\nabla^*$ (see \cite[Theorem 6.88]{bredies11}), we can formally identify the gradient
	of our functional with
	\begin{equation*}
		\A^*( \A f-g)-\sum_{k=1}^K\gamma_k G_{\sigma_k}^*\divergence\left(\varphi_k\left(\norm[\nabla_{\sigma_k} f]_2^2\right)\nabla_{\sigma_k} f\right)+\alpha f.
	\end{equation*}
	Now, similarly to the finite dimensional case, we can use a gradient descent method in order to minimize $		\T_{g,\gamma,\sigma,\alpha}(f)$ via an iteration
	\begin{equation*}
		f_{n+1} = f_n-t_n v_n,\qquad t_n>0
	\end{equation*}
	where $t_n >0$ is a suitable step size for all $n$ and $v_n= \T'_{g,\gamma,\sigma,\alpha}(f_n)$.	

		Due to the smoothness of our functional	we will adapt the step size given in \cite{barzilai88}: The authors derived their choice  from the secant equation inherited in quasi Newton methods by minimizing $r(\tau):= \norm[f_n-f_{n-1}-\tau(v_n-v_{n-1})]_2^2$. Here, we propose to use the corresponding step size given by
	\begin{equation*}
		\tau_n := \frac{\scalarproduct{v_n-v_{n-1}}{f_{n}-f_{n-1}}_{L_2(\Omega)}}{\scalarproduct{v_n-v_{n-1}}{v_{n}-v_{n-1}}_{L_2(\Omega)}}.
	\end{equation*}
	We will later prove that this gives indeed a valid step size.\\
	In this case we can use a modified \textbf{Armijo rule} by computing \begin{equation}
	     \label{eq:mod-arm}
		t_n =\max\left\{\beta^l \tau_n \, \vert \, l=0,1,2,...,\right\}
	\end{equation}
	satisfying
	\begin{equation}
    	\label{eq:wolfe-1} 
		\T_{g,\gamma,\sigma,\alpha}(f_n-t_n v_n)\leq \T_{g,\gamma,\sigma,\alpha}(f_n)-\mu t_n 
		\scalarproduct{v_n}{v_n}_{L_2(\Omega)}
	\end{equation}
	with $\mu,\beta\in (0,1)$. Another option is to compute $t_n$ from $\tau_n$ such that \textbf{Wolfe conditions} hold, \textit{i.e.}, in addition to \cref{eq:wolfe-1}, the inequality
	\begin{equation}
		\label{eq:wolfe-2} 
		-\scalarproduct{\T_{g,\gamma,\sigma,\alpha}'(f_n-t_n v_n)}{v_n}_{L_2(\Omega)} \geq -\varrho 		\scalarproduct{v_n}{v_n}_{L_2(\Omega)}
	\end{equation}
	with $0<\mu<\varrho<1$ is satisfied. A suitable  algorithm to do this can be found in \cite{izmailov14}. Since $\T_{g,\gamma,\sigma,\alpha}$ is Fr\'echet-differentiable,  coercive and bounded from below, one can apply the same arguments as in the finite dimensional case (see \cite{izmailov14}, Lemma 2.22 and its proof) to guarantee that the computation stops  after a finite number of iterations.\\\indent
    While the Armijo rule is easier to implement and only needs an update of the functional value during each update of $t_n$, more precautions have to be taken in order to guarantee a sufficient decrease in the objective functional as well as well-definedness of the $\tau_n$ and (global) convergence of the method, \textit{e.g.} by taking a similar approach as \cite{grippo86} or \cite{raydan97}. The computation of a step size satisfying the Wolfe conditions on the other hand is more expensive per iteration, needing also an update of the gradient. Its advantage, however, is that it lacks all of the mentioned problems with the Armijo rule. This leads to the following algorithm.
	\begin{algorithm}
		\caption{Compute minimizer of $\T_{g,\gamma,\sigma,\alpha}(f)$}
		\label{alg:alg-final}
		\textbf{Input}: $g$, $\gamma_{k}>0$,$\sigma_k>0$,$\varphi_k$, $\alpha>0$, $f_0$, $\tau_0>0$, \textbf{Choose} $0<\mu<\frac{1}{2},\mu<\varrho<1$\\
		\textbf{Output}: (Approximate) solution of $\min\T_{g,\gamma,\sigma,\alpha}(f)$
		\begin{algorithmic}
			\FORALL{$n=0,1,2,...$}
			\STATE $v_n\leftarrow \A^*( \A f_n-g)-\sum_{k=1}^K\gamma_k G_{\sigma_k}^*\divergence\left(\varphi_k\left(\norm[\nabla_{\sigma_k} f_n]_2^2\right)\nabla_{\sigma_k} f_n\right)+\alpha f_n$
			\IF {$\norm[v_n]=0$}
			\RETURN $f_n$
			\ENDIF
			\IF {$n>0$}
			\STATE $\tau_{n}\leftarrow\frac{\scalarproduct{f_{n}-f_{n-1}}{v_{n}-v_{n-1}}}{\norm[v_{n}-v_{n-1}]^2}$
			\ENDIF 
			\STATE Compute $t_n$ according to eqs. (\ref{eq:mod-arm}), (\ref{eq:wolfe-1}) and (\ref{eq:wolfe-2})
			\STATE $f_{n+1}\leftarrow f_{n}-t_n v_n$
			\STATE $n\leftarrow n+1$
			\ENDFOR
		\end{algorithmic}
	\end{algorithm}

This algorithm produces a sequence $(f_n)_n$, such that convergent subsequences converge weakly to stationary points. To show this, we first have to ensure that $\tau_n$ is indeed well defined. To this aim, we will assume w. l. o. g. that every generated $v_n\neq 0$. Otherwise the algorithm would stop.
	\begin{lemma}
		For $n>0$, let $f_n,v_n,f_{n-1},v_{n-1}$ be given by Algorithm \ref{alg:alg-final}. Then $\tau_{n}$ is well-defined and positive.
	\end{lemma}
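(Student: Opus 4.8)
The plan is to reduce the whole statement to the second Wolfe condition \cref{eq:wolfe-2}, which is precisely the ingredient the bare Armijo rule lacks and the reason this lemma holds. Since by assumption every generated $v_n\neq 0$, there are two things to verify: that the denominator $\norm[v_n-v_{n-1}]_{L_2(\Omega)}^2$ does not vanish (well-definedness) and that the quotient is strictly positive. Both will come out of a single sign computation on the numerator.

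First I would use the update rule $f_n=f_{n-1}-t_{n-1}v_{n-1}$ from \Cref{alg:alg-final}, which gives $f_n-f_{n-1}=-t_{n-1}v_{n-1}$ with $t_{n-1}>0$. Substituting this into the numerator and using bilinearity of the inner product yields
\begin{equation*}
\scalarproduct{f_n-f_{n-1}}{v_n-v_{n-1}}_{L_2(\Omega)}
= t_{n-1}\left(\norm[v_{n-1}]_{L_2(\Omega)}^2-\scalarproduct{v_{n-1}}{v_n}_{L_2(\Omega)}\right).
\end{equation*}

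Next I would invoke \cref{eq:wolfe-2} at iteration $n-1$. Since $v_n=\T_{g,\gamma,\sigma,\alpha}'(f_n)=\T_{g,\gamma,\sigma,\alpha}'(f_{n-1}-t_{n-1}v_{n-1})$, the curvature condition reads $\scalarproduct{v_n}{v_{n-1}}_{L_2(\Omega)}\leq \varrho\,\norm[v_{n-1}]_{L_2(\Omega)}^2$. Combined with $\varrho\in(0,1)$ and $v_{n-1}\neq 0$ this gives
\begin{equation*}
\norm[v_{n-1}]_{L_2(\Omega)}^2-\scalarproduct{v_{n-1}}{v_n}_{L_2(\Omega)}
\geq (1-\varrho)\norm[v_{n-1}]_{L_2(\Omega)}^2>0,
\end{equation*}
so, as $t_{n-1}>0$, the numerator is strictly positive.

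Finally, well-definedness follows for free: were $v_n=v_{n-1}$, the numerator would equal $\scalarproduct{f_n-f_{n-1}}{0}_{L_2(\Omega)}=0$, contradicting its strict positivity; hence $v_n\neq v_{n-1}$ and the denominator $\norm[v_n-v_{n-1}]_{L_2(\Omega)}^2$ is strictly positive. Being the ratio of two strictly positive numbers, $\tau_n$ is well-defined and positive. The only delicate point is index bookkeeping — reading \cref{eq:wolfe-2} at step $n-1$ with descent direction $-v_{n-1}$ so that its argument $f_{n-1}-t_{n-1}v_{n-1}$ is exactly $f_n$ — but no real obstacle arises, and the claim is essentially a one-line consequence of the curvature condition.
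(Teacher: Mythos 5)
Your proof is correct and takes essentially the same route as the paper's: both arguments rest on the second Wolfe condition \cref{eq:wolfe-2} applied at step $n-1$, combined with the update rule $f_n-f_{n-1}=-t_{n-1}v_{n-1}$, to get strict positivity of the numerator and, as a consequence, $v_n\neq v_{n-1}$ so that the denominator does not vanish. If anything, your ordering (positivity first, nonvanishing denominator by contradiction) makes the strict positivity of $\tau_n$ slightly more explicit than the paper's final display, which only records $\tau_n\geq 0$.
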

	\begin{proof} Eq. 
		\eqref{eq:wolfe-2} holds in the $(n-1)$th iteration step, \textit{i.e.} 
		\begin{equation}
			\label{eq:strict-ineq}
			-\scalarproduct{v_{n}}{v_{n-1}}\geq -\varrho\norm[v_{n-1}]^2>-\norm[v_{n-1}]^2.
		\end{equation}
		Since \cref{eq:strict-ineq} is equivalent to $\scalarproduct{v_{n}-v_{n-1}}{v_{n-1}}<0$, this implies $v_n\neq v_{n-1}$ and thus 
		\begin{equation*}
			\tau_{n}=\frac{\scalarproduct{f_n-f_{n-1}}{v_{n}-v_{n-1}}}{\scalarproduct{v_n-v_{n-1}}{v_{n}-v_{n-1}}}=\frac{-t_{n-1}\scalarproduct{v_{n-1}}{v_{n}-v_{n-1}}}{\norm[v_{n}-v_{n-1}]^2} \geq 0.
		\end{equation*}		
	\end{proof}

	Assuming the algorithm never stops, \textit{i.e.} $v_n\neq 0$ for all $n$, one can now prove the following (weak) convergence theorem for \Cref{alg:alg-final} which is similar to the theorems given in \cite{grippo86} and \cite{raydan97} for the finite dimensional case.
	\begin{theorem}\label{thm:convergence-alg}
		Let $(f_n)_n$ be a sequence generated by \Cref{alg:alg-final} and let  $p_k\in C^3_{+,b}\left(\mathbb{R}_0^+\right)$ for $k=1,\dots,K$. Then, the following properties hold:
		\begin{enumerate}[label= \arabic*)]
			\item There is at least one weak accumulation point.
			\item There is a fixed $\theta>0$ s.t. 
				\begin{align}
					\label{eq:efficiency}					
					\T_{g,\gamma,\sigma,\alpha}(f_n-t_n v_n)\leq \T_{g,\gamma,\sigma,\alpha}(f_n)-\theta \norm[v_n]^2.
				\end{align}
			\item \label{item:conerverge-alg-item} It holds $\lim_{n\to\infty}\norm[v_n]\to 0$.
			\item Every weak accumulation point $f_*$ is a stationary point.
			\item Every subsequence $(f_{n_l})_l$ converging (weakly) to $f_*$ satisfies
			\begin{equation*}				        
			   \T_{g,\gamma,\sigma,\alpha}(f_*)\leq \liminf_{l\to \infty}\T_{g,\gamma,\sigma,\alpha}(f_{n_l}).
			\end{equation*}
			\item If $f_{n_l}\to f$ strongly for a subsequence, then $f$ is not a maximum.
		\end{enumerate}
	\end{theorem}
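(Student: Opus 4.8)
The plan is to prove the six claims in the order 2), 3), 1), 4), 5), 6), since the uniform descent estimate 2) is the engine driving the remaining assertions. Its proof is a Zoutendijk-type argument: because the line search guarantees that the Wolfe conditions \cref{eq:wolfe-1} and \cref{eq:wolfe-2} hold at every step, I would first rewrite the curvature condition \cref{eq:wolfe-2} (using $v_{n+1}=\T'_{g,\gamma,\sigma,\alpha}(f_n-t_nv_n)$) as $\scalarproduct{v_n-v_{n+1}}{v_n}_{L_2(\Omega)}\geq (1-\varrho)\norm[v_n]^2$. Combining this with Cauchy--Schwarz, the global Lipschitz continuity of $\T'_{g,\gamma,\sigma,\alpha}$ from \Cref{thm:continuity-derivative} (with constant $\kappa$), and $f_{n+1}-f_n=-t_nv_n$ yields
\begin{equation*}
(1-\varrho)\norm[v_n]^2\leq \scalarproduct{v_n-v_{n+1}}{v_n}_{L_2(\Omega)}\leq \kappa\norm[f_{n+1}-f_n]_{L_2(\Omega)}\norm[v_n]=\kappa t_n\norm[v_n]^2,
\end{equation*}
hence the uniform lower bound $t_n\geq (1-\varrho)/\kappa=:t_{\min}>0$. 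Feeding this into the Armijo condition \cref{eq:wolfe-1} gives claim 2) with $\theta:=\mu(1-\varrho)/\kappa$.

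From 2) the sequence $(\T_{g,\gamma,\sigma,\alpha}(f_n))_n$ is nonincreasing and, by \Cref{lemma:pm-weakly-lower-semicont}, bounded from below, hence convergent. Summing the inequality in 2) then gives $\theta\sum_n\norm[v_n]^2\leq \T_{g,\gamma,\sigma,\alpha}(f_0)-\lim_n\T_{g,\gamma,\sigma,\alpha}(f_n)<\infty$, so $\norm[v_n]\to0$, which is 3). For 1), note $\T_{g,\gamma,\sigma,\alpha}(f_n)\leq \T_{g,\gamma,\sigma,\alpha}(f_0)$ for all $n$, so the iterates lie in the level set $\mathcal{L}_{\T_{g,\gamma,\sigma,\alpha}}(f_0)$, which is weakly sequentially compact by \Cref{prop:weakly-seq-closed}; a weakly convergent subsequence, and hence a weak accumulation point, therefore exists.

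Claims 4) and 5) are then immediate from the structural properties already established. If $f_{n_l}\rightharpoonup f_*$, the weak-to-weak continuity \cref{eq:weak-weak-continuity} gives $v_{n_l}=\T'_{g,\gamma,\sigma,\alpha}(f_{n_l})\rightharpoonup \T'_{g,\gamma,\sigma,\alpha}(f_*)$, while 3) gives $v_{n_l}\to0$ strongly; by uniqueness of weak limits $\T'_{g,\gamma,\sigma,\alpha}(f_*)=0$, so $f_*$ is stationary, proving 4). Claim 5) is exactly the weak lower semicontinuity of $\T_{g,\gamma,\sigma,\alpha}$ from \Cref{lemma:pm-weakly-lower-semicont}.

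For 6), suppose $f_{n_l}\to f$ strongly. Then $f$ is stationary by 4), and since $\T_{g,\gamma,\sigma,\alpha}$ is continuous on bounded sets (\Cref{corollary:lipschitz-cont}) we get $\T_{g,\gamma,\sigma,\alpha}(f_{n_l})\to \T_{g,\gamma,\sigma,\alpha}(f)$; as $(\T_{g,\gamma,\sigma,\alpha}(f_n))_n$ is nonincreasing and convergent, its limit equals $\T_{g,\gamma,\sigma,\alpha}(f)$, so in particular $\T_{g,\gamma,\sigma,\alpha}(f_n)\geq \T_{g,\gamma,\sigma,\alpha}(f)$ for every $n$. If $f$ were a local maximum, then $\T_{g,\gamma,\sigma,\alpha}(f_{n_l})\leq \T_{g,\gamma,\sigma,\alpha}(f)$ for large $l$, forcing equality $\T_{g,\gamma,\sigma,\alpha}(f_{n_l})=\T_{g,\gamma,\sigma,\alpha}(f)$; but the strict descent from 2), $\T_{g,\gamma,\sigma,\alpha}(f_{n_l+1})\leq \T_{g,\gamma,\sigma,\alpha}(f_{n_l})-\theta\norm[v_{n_l}]^2<\T_{g,\gamma,\sigma,\alpha}(f)$ (using $v_{n_l}\neq0$), contradicts $\T_{g,\gamma,\sigma,\alpha}(f_n)\geq \T_{g,\gamma,\sigma,\alpha}(f)$. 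Hence $f$ is not a maximum. The main obstacle is the descent estimate 2): it hinges on the step sizes being bounded away from $0$, which in turn requires both that the Wolfe line search genuinely terminates (so that \cref{eq:wolfe-2} may legitimately be invoked at each step) and the global Lipschitz bound on $\T'_{g,\gamma,\sigma,\alpha}$; once 2) is secured, the remaining claims follow mechanically from coercivity, weak lower semicontinuity and the weak-to-weak continuity of the derivative.
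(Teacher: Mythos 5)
Your proposal is correct and follows essentially the same route as the paper: the key step is identical (combining the curvature condition \cref{eq:wolfe-2} with the Lipschitz bound \cref{eq:frechet-lipschitz} to get $t_n\geq(1-\varrho)/\kappa$, then invoking \cref{eq:wolfe-1} with $\theta=\mu(1-\varrho)/\kappa$), and the remaining items rest on the same ingredients — level-set compactness (\Cref{prop:weakly-seq-closed}), weak lower semicontinuity (\Cref{lemma:pm-weakly-lower-semicont}), and weak-to-weak continuity \cref{eq:weak-weak-continuity}. Your minor variations (telescoping summation for 3) instead of $t_n\norm[v_n]^2\to0$; uniqueness of weak limits for 4) instead of weak lower semicontinuity of the norm; spelling out the strict-descent contradiction for 6) that the paper only cites) are equivalent in substance.
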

	\begin{proof}\mbox{}\\[-0.9em]
		\begin{enumerate}[label= \arabic*)]
			\item Since the sequence $(f_n)_n$ is generated by \ref{alg:alg-final}, the condition (\ref{eq:wolfe-1}) is satisfied and enforces $(\T_{g,\gamma,\sigma,\alpha}(f_n))_n$ to be non-increasing. Therefore, $f_n\in\mathcal{L}_{\T_{g,\gamma,\sigma,\alpha}}(f_0)\quad\text{for all } n$ and \Cref{prop:weakly-seq-closed} delivers the assertion.
			\item 
			Due to the Lipschitz continuity of the Fr\'echet derivative in  \cref{eq:frechet-lipschitz} and the Wolfe condition in  \cref{eq:wolfe-2}, it holds 
			\begin{align}\label{eq:bounded_tn}
			\left(1-\varrho\right)\norm[v_n]^2\leq \scalarproduct{v_n-v_{n+1}}{v_n}\leq \kappa t_n\norm[v_n]^2
			\quad \Rightarrow \quad 
			\frac{1-\varrho}{\kappa}\leq t_n.
			\end{align}
			\Cref{eq:efficiency} finally follows on from \cref{eq:wolfe-1} by choosing $\theta=\frac{(1-\varrho)\mu}{\kappa}$.
			
			\item Similarly to [\cite{grippo86}, p. 710, equation (7)], we first show that
$$\lim_{n\to\infty}t_{n}\norm[v_{n}]^2_{L^2(\Omega)}=0. $$
			By \cref{eq:wolfe-1}, it holds
			\begin{align*}
		        \mu t_n 
		        \scalarproduct{v_n}{v_n}_{L_2(\Omega)}\leq \T_{g,\gamma,\sigma,\alpha}(f_{n})- \T_{g,\gamma,\sigma,\alpha}(f_{n+1}).
			\end{align*}
			Since $(\T_{g,\gamma,\sigma,\alpha}(f_n))_n$ is a monotonically decreasing sequence bounded from below, the right hand side tends to 0 implying that $t_{n}\norm[v_{n}]^2_{L^2(\Omega)} \to 0$. Since $t_n$ is bounded from below, see \cref{eq:bounded_tn}, the desired result follows.
			\item Let $v^*=\T_{g,\gamma,\sigma,\alpha}(f^*)$. By \cref{eq:weak-weak-continuity}, we have $\T_{g,\gamma,\sigma,\alpha}(f_{n_l})\rightharpoonup\T_{g,\gamma,\sigma,\alpha}(f^*)$ for every sequence $(f_{n_l})_l$ converging weakly to $f^*$. The lower semicontinuity of the norm and \ref{item:conerverge-alg-item} finish the proof.
			\item This is a consequence of \Cref{lemma:pm-weakly-lower-semicont}.
			\item 
			Analogously to the finite dimensional case in \cite{grippo86}, the assertion follows from the fact, that  $(\T_{g,\gamma,\sigma,\alpha}(f_n))_n$ is strictly decreasing.
		\end{enumerate}
	\end{proof}
	Property (\ref{eq:efficiency}) describes the previously mentioned \glqq{}efficiency\grqq{} as the descent in the functional value is proportional to the gradient norm. 
	
	\par
	In contrast with \cite{grippo86} or \cite{raydan97}, we need to distinguish between 5) and 6) as in infinite dimensions, bounded sequences can have no convergent subsequences. Furthermore, the strong convergence results from \Cref{subsec:well-posedness-of-pm-reg} are not applicable either since they rely on the strong convergence of the penalty term which is only guaranteed if the noise level and the corresponding parameter choice go to zero.
	
	\par
	In practice, we only have discrete measurements available. However, using suitable discretizations (see \cite[Lemma 6.142]{bredies11}) for the scalar products and the differential operators, the discretization of the continuous gradient method coincides with the gradient method for the discretized model. This is an interesting property of the proposed continuous approach which can be straightforwardly applied to discrete problems.
	
\par Based on NID regularizer studied in \Cref{subsec:well-posedness-of-pm-reg} and \Cref{subsec:gradient-descent-for-pm-reg}, we propose to generalize this idea allowing variations of the penalty terms in order to better reflect the evolution of the iterated solution.

\section{Construction of an adaptive NID regularizer}	\label{sec:a-nid}

	One problem with the previous model is, that there might not be sufficient information on the edges in the first few iterates for the diffusion process to use its full potential. In \cite{li94} the authors considered a standard Perona-Malik functional with changing thresholds. We propose a more general adaptive approach, switching from a general TV regularization to a NID regularizer as described above.	More precisely, the general idea is to consider a mixture of both regularization terms in the gradient descent iterate as following:
	\begin{align*}
		v_n &= \A^*( \A f_n-g)+\alpha f_n -\omega(n)\beta \divergence\left(\frac{\nabla f}{\sqrt{\varepsilon+\norm[\nabla f]_2^2}}\right)\\
		&\qquad -(1-\omega(n))\left(\sum_{k=1}^K\gamma_k G_{\sigma_k}^*\divergence\left(\varphi_{k,n}\left(\norm[\nabla_{\sigma_k} f_n]^2_2\right)\nabla_{\sigma_k} f_n\right)\right)\\
		 f_{n+1}&=f_n-t_n v_n,
	\end{align*}
with $t_n$ a given step size. In the above construction, the NID term depends on the iteration $n$ as indicated by $\varphi_{k,n}$. This evolution of the NID term can be, for instance, done by changing weights and thresholds. This construction of the gradient descent can be understood as solving a changing minimization problem $\min\limits_f \T_n(f)$ with 
\begin{align*}
		\T_n(f)&:=\frac{1}{2}\norm[ \A f-g]_\Y^2
		+\frac{\alpha}{2}\norm[f]^2_{L_2(\Omega)}
		+\omega(n)\frac{\beta}{2}\int_{\Omega}\sqrt{\varepsilon+\norm[\nabla f]_2^2} \rmd x \\
	&\qquad +	(1-\omega(n))\left(\sum_{k=1}^K\frac{\gamma_k}{2}\int_{\Omega}p_k\left(\norm[\nabla_{\sigma_k} f]^2_2\right)\rmd x \right),
	\end{align*}	
	where $\omega:\mathbb{N}_0\to \left[0,1\right]$ is a monotonically decreasing function. \par
	Similarly to \Cref{thm:continuity-derivative} and \Cref{corollary:lipschitz-cont}, we can establish the following properties. 
	\begin{theorem}
		\label{thm:cont-der-mixed}
		For every $n\in\mathbb{N}_0$, it holds:
		\begin{enumerate}[label= \arabic*)]
			\item 
			The mapping $\T_n:\Sob^1(\Omega)\to \mathbb{R}$
			is Fr\'{e}chet-differentiable with Fr\'{e}chet-derivative 		
			\begin{align*}
				\T_n'(f)&=\A^*( \A f-g) -\omega(n)\beta \divergence\left(\frac{1}{\sqrt{\varepsilon+\norm[\nabla f]_2^2}}\nabla f\right)	\\
				&\qquad-(1-\omega(n))\sum_{k=1}^{K}\gamma \nabla_\sigma^*\divergence\left(\varphi_{n,k}\left(\norm[\nabla_{\sigma_k} f_n]^2_2\right)\nabla_{\sigma_k} f_n\right)+\alpha f_n.
			\end{align*}

			\item $\T_n'$ is Lipschitz in the sense s.t. 
			\begin{equation*}
			\norm[\T_n'(f_1)-\T_n'(f_2)]\leq \kappa_n\norm[f_1-f_2]_{L_2(\Omega)}\quad\text{for all } f_1,f_2\in L_2(\Omega)
			\end{equation*}
			with some $\kappa_n>0$.
			\item The mappings	$\T_n:\Sob^1(\Omega)\to \mathbb{R}$ and $\T_n':\Sob^1(\Omega)\to\mathcal{L}\left(L_2(\Omega),\mathbb{R}\right)$ are bounded on bounded sets. In particular, the former is Lipschitz continuous on bounded sets.	
		\end{enumerate}
	\end{theorem}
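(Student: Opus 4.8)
The plan is to exploit the additive structure of $\T_n$ and reduce almost everything to the already-proved \Cref{thm:continuity-derivative} and \Cref{corollary:lipschitz-cont}. Since $n$ is fixed, $\omega(n)\in[0,1]$ is merely a constant, and I would split
\[
\T_n(f) = \underbrace{\tfrac12\norm[\A f - g]_\Y^2 + \tfrac{\alpha}{2}\norm[f]_{L_2(\Omega)}^2 + (1-\omega(n))\Reg_{NID}^n(f)}_{=:S_n(f)} + \omega(n)\,R_\varepsilon(f),
\]
where $R_\varepsilon(f):=\frac{\beta}{2}\int_\Omega\sqrt{\varepsilon+\norm[\nabla f]_2^2}\,\rmd x$ is the $\varepsilon$-smoothed total-variation term and $\Reg_{NID}^n$ is the NID penalty with $\varphi_{k,n}=p_{k,n}'$ in place of $\varphi_k$. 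For each fixed $n$ the summand $S_n$ is just a functional of the type treated in \Cref{thm:continuity-derivative} (with $\gamma_k$ replaced by $(1-\omega(n))\gamma_k\in\R_0^+$), so it is Fr\'echet-differentiable with the stated derivative, has a Lipschitz derivative, and is bounded on bounded sets. Thus all three assertions reduce to proving the same three properties for the single new term $R_\varepsilon$ and adding the contributions. The one structural change is that $R_\varepsilon$ uses the \emph{unsmoothed} gradient $\nabla$, which is why the domain must be restricted from $L_2(\Omega)$ to $\Sob^1(\Omega)$.

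For the derivative of $R_\varepsilon$ I would work pointwise with the smooth integrand $q_\varepsilon:\R^n\to\R$, $q_\varepsilon(p):=\sqrt{\varepsilon+\norm[p]_2^2}$, for which $\nabla q_\varepsilon(p)=p/\sqrt{\varepsilon+\norm[p]_2^2}$ and $D^2q_\varepsilon(p)=(\varepsilon+\norm[p]_2^2)^{-1/2}\bigl(I-\tfrac{p\otimes p}{\varepsilon+\norm[p]_2^2}\bigr)$ has operator norm $\le\varepsilon^{-1/2}$ uniformly in $p$; here the regularization $\varepsilon>0$ is essential. A second-order Taylor estimate on $q_\varepsilon$ then yields, for $f,h\in\Sob^1(\Omega)$,
\[
\left| R_\varepsilon(f+h) - R_\varepsilon(f) - \frac{\beta}{2}\int_\Omega\frac{\nabla f\cdot\nabla h}{\sqrt{\varepsilon+\norm[\nabla f]_2^2}}\,\rmd x\right| \le \frac{\beta}{4\sqrt{\varepsilon}}\norm[\nabla h]_{L_2(\Omega)}^2 \le \frac{\beta}{4\sqrt{\varepsilon}}\norm[h]_{\Sob^1(\Omega)}^2,
\]
which is $o(\norm[h]_{\Sob^1(\Omega)})$, so $R_\varepsilon$ is Fr\'echet-differentiable with the displayed linear functional as derivative. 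Integrating by parts via $-\divergence=\nabla^*$ (as in \cref{eq:frech-deriv-func}) rewrites this derivative in the divergence form appearing in the statement, matching the $v_n$-iteration.

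For Lipschitz continuity of $R_\varepsilon'$, the point is that the vector field $p\mapsto p/\sqrt{\varepsilon+\norm[p]_2^2}$ is globally Lipschitz on $\R^n$ with constant $\le\varepsilon^{-1/2}$ (its Jacobian is exactly $D^2q_\varepsilon$, already bounded above). The induced superposition operator therefore satisfies
\[
\norm[\tfrac{\nabla f_1}{\sqrt{\varepsilon+\norm[\nabla f_1]_2^2}} - \tfrac{\nabla f_2}{\sqrt{\varepsilon+\norm[\nabla f_2]_2^2}}]_{L_2(\Omega)} \le \varepsilon^{-1/2}\norm[\nabla f_1-\nabla f_2]_{L_2(\Omega)} \le \varepsilon^{-1/2}\norm[f_1-f_2]_{\Sob^1(\Omega)},
\]
and pairing against $\nabla h$ with $\norm[h]_{\Sob^1(\Omega)}\le1$ gives the Lipschitz bound for $R_\varepsilon'$. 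Combining this with the Lipschitz constant for $S_n$ from \Cref{thm:continuity-derivative} and the triangle inequality produces the constant $\kappa_n$ in item~2). Item~3) then follows exactly as in \Cref{corollary:lipschitz-cont}: Lipschitz continuity of $\T_n'$ forces boundedness of $\T_n'$ on bounded sets, and the mean value inequality \cref{eq:mean-val} upgrades this to Lipschitz continuity, hence boundedness, of $\T_n$ on bounded sets.

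The main obstacle I anticipate is bookkeeping the function spaces. Because the total-variation term carries the unsmoothed gradient, the natural remainder and Lipschitz estimates for $R_\varepsilon$ are controlled by $\norm[\cdot]_{\Sob^1(\Omega)}$ rather than $\norm[\cdot]_{L_2(\Omega)}$, so item~2) is literally true only when $\T_n'$ is read as an element of $\mathcal{L}(\Sob^1(\Omega),\R)$ and the Lipschitz estimate is taken in the $\Sob^1$-norm (equivalently, on bounded subsets of $\Sob^1(\Omega)$). Relatedly, writing $R_\varepsilon'$ as an element of $\mathcal{L}(L_2(\Omega),\R)$ via the divergence form requires $\divergence\bigl(\nabla f/\sqrt{\varepsilon+\norm[\nabla f]_2^2}\bigr)\in L_2(\Omega)$, which is the genuinely new regularity issue that is absent from the smoothed NID term, where $\nabla_{\sigma_k}^*$ maps into $L_2(\Omega)$ automatically. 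Everything else is a direct transcription of the NID arguments with $\varphi_{k,n}$ in place of $\varphi_k$.
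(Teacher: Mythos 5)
Your proposal is correct and takes essentially the same route as the paper: there, $\T_n$ is likewise split as $(1-\omega(n))\T_{NID}^n(f)+\omega(n)\T_{TV}(f)$ and (the proof of) \Cref{thm:continuity-derivative} is applied to each summand, the TV part being exactly the special case $p_{AV}(s)=\sqrt{\varepsilon+s}$ with the bounded linear operator $\nabla:\Sob^1(\Omega)\to L_2(\Omega,\mathbb{R}^2)$ playing the role of $\nabla_{\sigma_k}$, so your explicit Taylor/Hessian computation for $R_\varepsilon$ is just that specialization written out with explicit constants. Your closing caveat is also warranted, and indeed sharper than the paper itself: the Lipschitz estimate for the unsmoothed TV term is naturally in the $\Sob^1(\Omega)$-norm, whereas the paper's item 2) states it with $\norm[f_1-f_2]_{L_2(\Omega)}$ and passes over this point with ``analogously by linearity''.
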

	\begin{proof}
		We can decompose our functional $\T_n$ in 
		\begin{align*}
			\T_n(f)&=(1-\omega(n))\T_{NID}^n(f)+\omega(n)\T_{TV}(f)
			\intertext{with}
			 \T_{NID}^n(f)&=\frac{1}{2}\norm[ \A f-g]_{L_2(\tilde{\Omega})}+\sum_{k=1}^{K}\int_{\Omega}p_{k,n}(\norm[\nabla_{\sigma_k} f]^2_2)\,\rmd x+\frac{\alpha}{2}\norm[f]^2_{L_2(\Omega)}\\			
			 \T_{TV}(f)&=\frac{1}{2}\norm[ \A f-g]_{L_2(\tilde{\Omega})}+\frac{\beta}{2}\int_{\Omega}\sqrt{\varepsilon+\norm[\nabla  f]_2^2}\,\rmd x+\frac{\alpha}{2}\norm[f]^2_{L_2(\Omega)}
		\end{align*}
		\begin{enumerate}[label= \arabic*)]
		\item Since $f\in \Sob^1(\Omega)$ and $\nabla:\Sob^1(\Omega)\to L_2(\Omega)$ is bounded, it is in particular Fr\'{e}chet-differentiable. We thus can apply the proof of \Cref{thm:continuity-derivative} directly to $\T_{TV}$ using the penalty function $p_{AV}(s)=\sqrt{\varepsilon+s}$. 
		Since we can further apply \Cref{thm:continuity-derivative} to $\T_{NID}$, the result follows directly by linearity. 
		\item Analogously we can deduce by linearity
		\begin{equation*}
			\norm[\T_n'(f_1)-\T_n'(f_2)]\leq \kappa_n\norm[f_1-f_2]_{L_2(\Omega)}\quad \text{for all }  f_1,f_2\in L_2(\Omega)
		\end{equation*}
		for some $\kappa_n>0$.
		\item The Lipschitz continuity follows using the summation rule and the argument from the proof of \Cref{corollary:lipschitz-cont}. 
	    \end{enumerate}
	\end{proof}
	This theorem provides a sufficient regularity of the A-NID functional in order to give a gradient descent type algorithm.	A suitable step size can be computed using the Armijo rule, \textit{i.e.}  $t_n:=\tau^l$, where $0<\tau<1$ is fixed and $l$ is the smallest number in every step s.t. 
	\begin{equation}
	\label{eq:mixed_step_cond}
	\T_n(f_n-\tau^l v_n)\leq \T_n(f_n)-\mu \tau^l \scalarproduct{v_n}{v_n}_{L_2(\Omega)}
	\end{equation}
	holds, leading to Algorithm \ref{alg:alg-mixed}. 	
	\begin{algorithm} 
		\caption{Adaptive NID}
		\label{alg:alg-mixed}
		\textbf{Input}: $g$, $(\gamma_{k})_k>0$, $\alpha>0$, $f_0$, $\tau_0>0$, \textbf{Choose} $0<\mu<\frac{1}{2},\mu<\varrho<1$\\
		\textbf{Output}: (Approximate) solution of $\T^1(f)$
		\begin{algorithmic}
			\FORALL{$n=0,1,2,...$}
			\STATE $v_n\leftarrow (1-\omega(n)) {\T_{NID}^n}' (f_n)+\omega(n){\T_{TV}}'(f_n)$
			\IF {$\norm[v_n]=0$}
			\RETURN $f_n$
			\ENDIF
			\STATE Compute $t_n$ satisfying \cref{eq:mixed_step_cond} 
			\STATE $f_{n+1}\leftarrow f_{n}-t_n v_n$
			\STATE $n\leftarrow n+1$
			\ENDFOR
		\end{algorithmic}
	\end{algorithm}
	\par 
	In order to analyze convergence to a steady state, we impose the following condition on $\omega$ and $(\T_{NID}^n)_n$: Let there be an $N>0$ s.t. for all $n>N$
	\begin{align}\label{eq:condition_nid_nk_2}
	\mu t_n\norm[v_n]^2 \geq &\left(\omega(n)-\omega(n+1)\right) \left(\T_{NID}^{n}(f_{n+1}) - \T_{TV}(f_{n+1}) \right) \nonumber\\ 
	&+ (1-\omega(n+1)) \left(\T_{NID}^{n+1}(f_{n+1})  - \T_{NID}^{n}(f_{n+1}) \right) .
	\end{align}
This relation gives a control of the variations w.r.t. $n$ allowed for $\T_{NID}^{n}$ and $\omega(n)$ such that \Cref{alg:alg-mixed} produces for all $n>N$ a proper gradient descent.	Assuming as before $v_n\neq 0$ for all generated $f_n$, we can then state the following theorem.
	\begin{theorem}
		\label{thm:conv-anid}
		Let $(\T_{NID}^n)_n$ be uniformly coercive with respect to $\norm[\cdot]_{\Sob^1(\Omega)}$, \textit{i.e.} $\T_{NID}^j(f_j)\to\infty$ for any sequence $(f_j)_j\subset \Sob^1(\Omega)$ with $\norm[f_j]_{\Sob^1(\Omega)}\to\infty$. Let furthermore
		\begin{itemize}
			\item $(p_{k,n})_n$ s.t. there exist constants $L_k$ satisfying $L_k\geq \sup_{s\in\mathbb{R}} p_{k,n}''(s)$ for all $n$; 			
			\item $(f_n)_n$ be a sequence generated by \Cref{alg:alg-mixed};
			\item $(T_{NID}^n)_n$ and $\omega$ s.t. there is an $N$ for which \cref{eq:condition_nid_nk_2} holds;
			\item $\varepsilon>0$ s.t. there exists $N_\varepsilon$ with $\omega(n)\geq \varepsilon$ for all $n> N_\varepsilon$.
		\end{itemize}
		Then the sequence $(f_n)_n$ satisfies the following properties, similar to \Cref{thm:convergence-alg}:
		\begin{enumerate}[label= \arabic*)]
			\item If $\norm[v_n]\neq 0$, $t_n$ is well defined and there is a fixed $\theta>0$ (independent from $n$) s.t. $t_n\geq \theta$. In particular $f_{n+1}$ is well defined.
			\item $\left(\T_n(f_n)\right)_{n>N}$ is (strictly) decreasing, in particular it stays bounded.
			\item $(f_n)_n$ stays bounded, in particular we have at least one weak accumulation point.
			\item It holds that $\lim_{n\to\infty}\norm[v_n]\to 0$.
		\end{enumerate}
	\end{theorem}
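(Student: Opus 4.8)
The plan is to follow the template of \Cref{thm:convergence-alg}, the only genuinely new feature being that the functional $\T_n$ driving the descent changes with $n$; the extra hypotheses (the uniform curvature bounds $L_k$, the lower bound $\omega(n)\geq\varepsilon$, and the compatibility relation \cref{eq:condition_nid_nk_2}) are exactly what is needed to absorb this change. The engine of the whole argument is a \emph{uniform} Lipschitz constant for the derivatives $\T_n'$. From \Cref{thm:cont-der-mixed} each $\T_n'$ is Lipschitz with some $\kappa_n$; inspecting that proof, $\kappa_n$ depends on the data term (fixed), on $\alpha$ (fixed), on the TV block (fixed), and on the NID block only through bounds on $p_{k,n}''$. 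Since $L_k\geq\sup_s p_{k,n}''(s)$ uniformly in $n$ and $\omega(n)\in[0,1]$, these combine into a single $\kappa$ with $\kappa_n\leq\kappa$ for all $n$. With this uniform $\kappa$ in hand, property 1) is the standard backtracking analysis: the descent lemma shows that \cref{eq:mixed_step_cond} is automatically satisfied once $\tau^l\leq 2(1-\mu)/\kappa$, so the Armijo loop terminates, and the accepted step obeys $t_n\geq\theta>0$ with $\theta$ proportional to $(1-\mu)/\kappa$ and independent of $n$; in particular $f_{n+1}$ is well defined whenever $v_n\neq 0$.

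For property 2) I would first make the change of functional explicit. A direct expansion, using $\T_m(f)=(1-\omega(m))\T_{NID}^m(f)+\omega(m)\T_{TV}(f)$, gives
\begin{equation*}
\begin{aligned}
\T_{n+1}(f_{n+1})-\T_n(f_{n+1})&=(\omega(n)-\omega(n+1))\bigl(\T_{NID}^{n}(f_{n+1})-\T_{TV}(f_{n+1})\bigr)\\
&\quad+(1-\omega(n+1))\bigl(\T_{NID}^{n+1}(f_{n+1})-\T_{NID}^{n}(f_{n+1})\bigr)=:R_n,
\end{aligned}
\end{equation*}
which is precisely the right-hand side of \cref{eq:condition_nid_nk_2}. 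Combining the Armijo inequality \cref{eq:mixed_step_cond}, $\T_n(f_{n+1})\leq\T_n(f_n)-\mu t_n\norm[v_n]^2$, with $R_n\leq\mu t_n\norm[v_n]^2$ from \cref{eq:condition_nid_nk_2} then yields $\T_{n+1}(f_{n+1})\leq\T_n(f_n)$ for every $n>N$, so $(\T_n(f_n))_{n>N}$ is (strictly, when $v_n\neq0$) decreasing. As each summand of $\T_n$ is nonnegative, the sequence is bounded below by $0$, hence convergent. Property 3) then follows from coercivity: on the relevant tail $1-\omega(n)\geq 1-\omega(N+1)>0$ by monotonicity of $\omega$, so $\T_n(f_n)\geq(1-\omega(N+1))\T_{NID}^n(f_n)$ bounds $\T_{NID}^n(f_n)$ uniformly, and the assumed uniform $\Sob^1(\Omega)$-coercivity of $(\T_{NID}^n)_n$ bounds $(f_n)_n$ in $\Sob^1(\Omega)$; reflexivity delivers a weak accumulation point, exactly as in \Cref{prop:weakly-seq-closed}.

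The crux is property 4). Since $(\T_n(f_n))_{n>N}$ converges, $\T_n(f_n)-\T_{n+1}(f_{n+1})\to 0$, and rearranging the identity above together with the Armijo bound gives $\mu t_n\norm[v_n]^2\leq\bigl(\T_n(f_n)-\T_{n+1}(f_{n+1})\bigr)+R_n$. It therefore suffices to show $R_n\to 0$, and then $t_n\norm[v_n]^2\to0$ forces $\norm[v_n]\to0$ because $t_n\geq\theta>0$. The first term of $R_n$ vanishes: $\omega$ is monotone and bounded, so $\omega(n)-\omega(n+1)\to0$, while $\T_{NID}^{n}(f_{n+1})-\T_{TV}(f_{n+1})$ stays bounded since $(f_n)_n$ lies in a bounded set on which, by the analogue of \Cref{corollary:lipschitz-cont} established in \Cref{thm:cont-der-mixed}, both functionals are bounded. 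The hard part is the second term, i.e. controlling the running change $\T_{NID}^{n+1}(f_{n+1})-\T_{NID}^{n}(f_{n+1})$ of the NID penalty across one iteration; here one must argue that the adaptively updated $p_{k,n}$ become close in consecutive steps, uniformly on the bounded range attained by $\norm[\nabla_{\sigma_k}f_{n+1}]_2^2$, so that this difference tends to $0$. This is the genuinely new obstacle compared with the fixed-functional setting of \Cref{thm:convergence-alg}: the compatibility relation \cref{eq:condition_nid_nk_2} by itself only guarantees the monotonicity in property 2), whereas the vanishing of $\norm[v_n]$ additionally needs the drift of the functional itself to die out.
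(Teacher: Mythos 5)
Your treatment of properties 1)--3) is correct and follows the same route as the paper: the uniform bound $L_k\geq\sup_s p_{k,n}''(s)$ produces a descent constant independent of $n$ (the paper does this by an explicit Taylor expansion of $\phi(\xi)=\frac12 p_{k,n}(\norm[\xi]_2^2)$ and sets $\theta=\min\left(\tau,\tfrac{2(1-\mu)}{\tilde{\kappa}\tau}\right)$); monotonicity comes from the split $\T_{n+1}(f_{n+1})-\T_n(f_n)=I_1+I_2$ with $I_2:=\T_n(f_{n+1})-\T_n(f_n)\leq-\mu t_n\norm[v_n]^2$ (Armijo) and $I_1:=\T_{n+1}(f_{n+1})-\T_n(f_{n+1})\leq\mu t_n\norm[v_n]^2$ (this is your $R_n$, controlled by \cref{eq:condition_nid_nk_2}); and boundedness plus reflexivity of $\Sob^1(\Omega)$ gives the accumulation point. (A side remark: the paper's proof of 3) uses $\T_n(f_n)\geq\varepsilon\T_{NID}^n(f_n)$, i.e. it needs the NID weight $1-\omega(n)$ bounded away from zero, whereas the theorem's fourth bullet literally bounds $\omega(n)$ from below; your reading via monotonicity of $\omega$ is effectively the same repair of this inconsistency.)

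For property 4) you stop short of a proof, but your diagnosis of the obstacle is accurate, and it in fact exposes a flaw in the paper's own argument. The paper argues as follows: with $T:=\lim_n\T_n(f_n)$, it combines $\T_{n+1}(f_{n+1})<\T_n(f_{n+1})+\mu t_n\norm[v_n]^2\leq\T_n(f_n)$, invokes a sandwich argument to conclude $\T_n(f_{n+1})\to T$, and then reads off $0\leq\mu t_n\norm[v_n]^2\leq\T_n(f_n)-\T_n(f_{n+1})\to0$. But the sandwich only yields $\T_n(f_{n+1})+\mu t_n\norm[v_n]^2\to T$; discarding the term $\mu t_n\norm[v_n]^2$ presupposes exactly the conclusion, so the argument is circular. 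Concretely, the three facts the paper uses --- $\T_n(f_n)\downarrow T$, the Armijo inequality, and \cref{eq:condition_nid_nk_2} --- are all satisfied by the real sequences $\T_n(f_n)=T+\tfrac1n$, $\mu t_n\norm[v_n]^2=1$, $\T_n(f_{n+1})=T+\tfrac1n-1$, so no argument using only these inequalities can force $\norm[v_n]\to0$: the Armijo decrease can be exactly cancelled, step after step, by the drift $I_1$ of the changing functional. This is precisely your point that \cref{eq:condition_nid_nk_2} with the constant $1$ buys monotonicity (property 2) but not the vanishing of the gradients. The statement does become provable under either of two strengthenings: assume the drift vanishes, $I_1\to0$ (your suggestion; then $\mu t_n\norm[v_n]^2\leq\left(\T_n(f_n)-\T_{n+1}(f_{n+1})\right)+I_1\to0$), or tighten \cref{eq:condition_nid_nk_2} to $I_1\leq c\,\mu t_n\norm[v_n]^2$ with a fixed $c<1$, since then $(1-c)\,\mu t_n\norm[v_n]^2\leq\T_n(f_n)-\T_{n+1}(f_{n+1})\to0$. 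So your proposal is incomplete exactly where the paper's proof is invalid; the missing ingredient you name --- decay of the functional drift across iterations --- is genuinely needed and is not supplied by the stated hypotheses.
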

	\begin{proof} See \Cref{proof:conv-anid}.
	\end{proof}

We have established convergence of the gradient descent approach for the adaptive NID functional defined in \cref{eq:def_aNID}. We illustrate the proposed approach at an example from computerized tomography and compare its performance to the standard NID approach (\ref{eq:def_NID}) and the standard TV regularization in the following section.

	\section{Application to computerized tomography}
	\label{sec:numerical-implementation}

\subsection{Recalls on the Radon transform}
\par
In order to illustrate and validate the proposed regularization technique, we consider the image reconstruction problem in computerized tomography (CT). In this case, the forward model is described by the well-known Radon transform defined by
\begin{equation*}
g(\theta,s)= \A f(\theta,s):=\int_{\R} f(s\theta + q \theta^\perp)\rmd q \qquad \theta \in S^{1},s\in\mathbb{R}.
\end{equation*}
A regularized solution to the inverse problem $g = \A f$ can be obtained by the filtered backprojection (FBP) algorithm which can be expressed using the 1D-Fourier transform $\F$ acting on the component $s$ by
\begin{equation*}
f(x)=\frac{1}{4\pi^2} \A^*\F^{-1}(F_\gamma (\sigma)\left|\sigma\right|\F {g}(\theta,\sigma) )(x)
\end{equation*}
with $\A^*$ the adjoint of $\A$ and $F_\gamma$ a low pass filter. For the results here we used the standard Shepp-Logan filter
\begin{equation}\label{eq:shepp-logan}
	F_\gamma (\sigma)=\left\{\begin{array}{cl} \sinc\left(\frac{\pi\sigma}{2\gamma}\right) & \left|\sigma\right|\leq \gamma\\0& \textnormal{ otherwise.}
\end{array}\right.
\end{equation}
For a more detailed discussion on CT, we refer the reader to \cite{natterer01}. In the simulations, we consider the well-known Shepp-Logan phantom (see  \Cref{fig:phantom}). The corresponding projections or \textit{sinogram} is given in \Cref{fig:sino} with and without noise. We used an additive normally distributed noise which leads to the following signal-to-noise ratios: $\snr\approx 19.4$dB and $\psnr\approx 35.0$dB.

\begin{figure}[t]\centering
\begin{subfigure}[c]{0.49\linewidth}
\includegraphics[width=\linewidth]{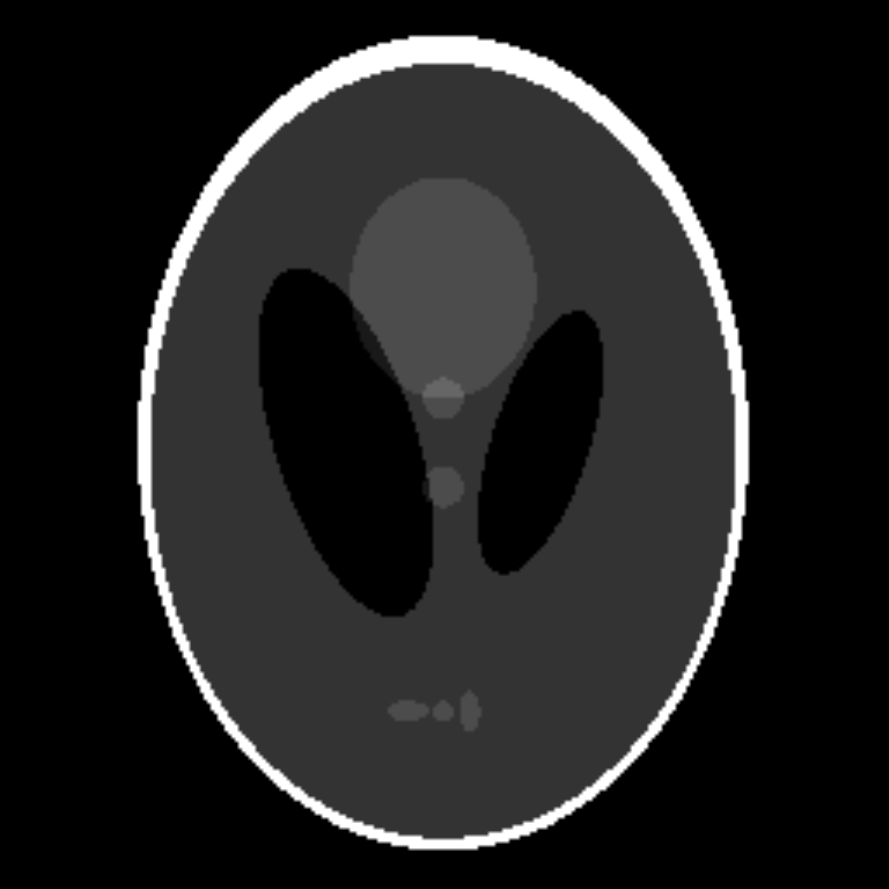}
    \caption{}
    \label{fig:phantom}
\end{subfigure}
\begin{subfigure}[c]{0.49\linewidth}
    \includegraphics[width=\linewidth]{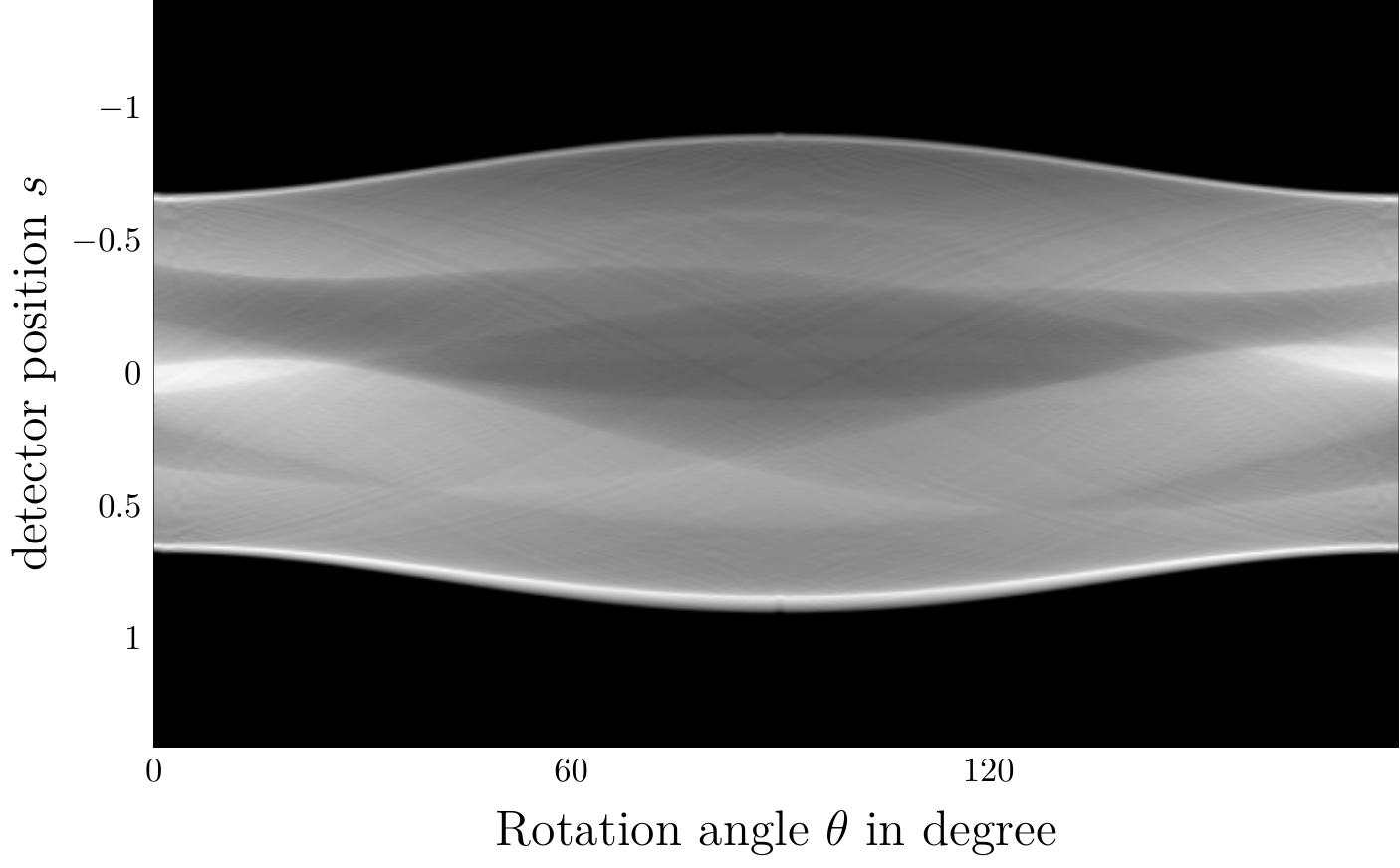}\\
     \includegraphics[width=\linewidth]{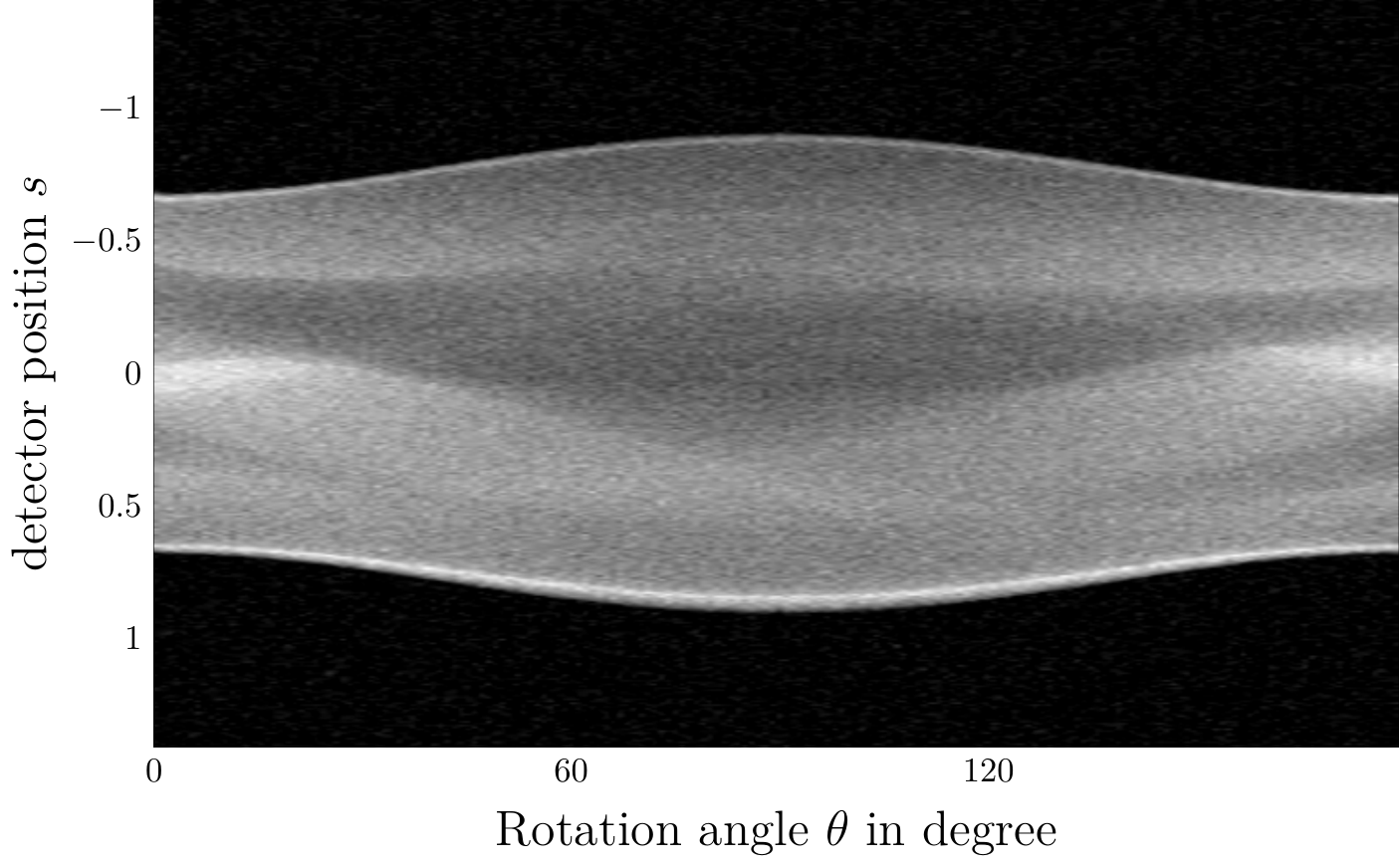}
    \caption{}
    \label{fig:sino}
\end{subfigure}
\caption{(a) Shepp-Logan phantom; (b) Exact and noisy sinograms.}
\end{figure}

\subsection{Discretization}	

In order to implement our approach on realistic data, we must consider a proper discretization. The domain of the image $[-1,1]\times [-1,1]$ is sampled on a standard grid of $N^2$ pixels with inner width $h=\frac{2}{N}$. As depicted in \cref{fig:sino}, the Radon transform data is sampled with $Q$ detector positions denoted by $(s_l)_{l=1,\ldots,Q}$ and $P$ angles ranging from 0 to $\pi$ and denoted by $(\theta_j)_{j=1,\ldots,P}$. More suited for iterative scheme, the Radon transform is approximated by a projection matrix $\in \R^{P Q\times N^2}$ whose entries return  the length of the intersection between a pixel $(m,n)$ and a straight line $(\theta_j,s_l)$. (The values used where $N=256,P=Q=180$.)\\

\par	
For the corresponding discretization of the differential operators we used an isotropic forward-backward-differences scheme 
    \begin{align*}	    
    	\divergence(\varphi(\norm[\nabla f]^2_2)\nabla f)(x_{i,j})&\approx \frac{w_{i,j}}{h}\left((\partial_1 f)_{i,j}-(\partial_1 f)_{i,j-1}\right)\\
	    &\quad +\frac{w_{i,j}}{h}\left((\partial_2 f)_{i,j}-(\partial_2 f)_{i+1,j}\right).
    \end{align*}
    with 
    \begin{align*}
    	w_{i,j}=\varphi\left(\norm[\nabla f(x_{i,j})]^2_2\right)&\approx \varphi\left(\left|(\partial_1 f)_{i,j}\right|^2+\left|(\partial_2 f)_{i,j}\right|^2\right)
    \end{align*}
    and
    \begin{align*}
    	\left(\partial_{1}f\right)_{i,j}&:=\left\{\begin{array}{cl} \frac{f_{i,j+1}-f_{i,j}}{h}, & j<N \\ 0 & j=N\end{array}\right.\approx \frac{\partial}{\partial x_1} f(x_{i,j})\\
    	\left(\partial_{2}f\right)_{i,j}&:=\left\{\begin{array}{cl}
    	\frac{f_{i-1,j}-f_{i,j}}{h}, & i>1\\
    	0 & i=1\end{array}\right.\approx \frac{\partial}{\partial x_2} f(x_{i,j}).
    \end{align*}

\subsection{Construction of the TV, NID and A-NID regularizer}

The primal-dual algorithm is the standard way to implement the $L_2$-TV regularization. In order to be consistent with our gradient descent approach, we consider the regularized TV-functional proposed by Acar and Vogel in \cite{acar94},
	\begin{equation*}
		\T_{TV}(f)=\frac{1}{2}\norm[\A f-g]_{\Y}^2+\beta \int_{\Omega}\sqrt{\varepsilon+\norm[\nabla f]_2^2}\rmd x.
	\end{equation*}
In the simulations, we chose $\beta = 0.5h^2$ and $\epsilon=0.01$.\\

\par	
The general NID and A-NID approaches give flexibility on the choice and construction of the penalty terms. In the following, we consider three different approaches, but many more are possible:

	\begin{enumerate}
		\item[NID 1]: We choose $\frac{\gamma_k}{2}p_k(\cdot)$ to be 
		\begin{equation*}
			\frac12\gamma_k \lambda_k^2\log\left(1+\frac{\norm[\nabla_\sigma f]^2_2}{\lambda_k^2}\right),
		\end{equation*}
		which corresponds to a weighted average of the anti-derivative of the diffusion rate proposed by Perona and Malik for different thresholds.
		\item[NID 2]: We use shifted versions of Perona-Malik-type diffusion functions of the form 
		\begin{equation*}
			\varphi_k(s^2)=\left\{\begin{matrix}
				\dfrac{\lambda_k^2}{\lambda_k^2+(s-s_k)^2}, &s\geq s_k\\0,& s<s_k
			\end{matrix}\right. 
		\end{equation*}
		with $s_k$ a given shift. This can be interpreted as applying the diffusion process on different grey value scales and computing a weighted mean.
		\item[NID 3]: Here we build the derivative of the flux function directly to be a smooth alternating-pattern mimicking the forward-backward-diffusion switching of the previous approach, only with a more precise control of intervals where forward (or backward) diffusion occurs. The corresponding diffusion rate was then deduced from the derivative of the flux function evaluated by a quadrature scheme.
	\end{enumerate}
These three different approaches for the NID functional are depicted in  \Cref{fig:flux-functions-nid-large} and were computed based on the values of the Shepp-Logan phantom. We note them  $\psi^{(1)}$, $\psi^{(2)}$ and $\psi^{(3)}$ respectively.\\

\par As described in \Cref{sec:a-nid}, the A-NID approach consists in allowing the NID functional to change during the iteration process.  We gave a special structure to the A-NID by considering a TV functional ${\T_{TV}}$ as starting point and then converging to a target NID functional ${\T_{NID}}$. The combination of the TV and NID functionals is controlled by a sigmoid function $w(n)$, see \Cref{fig:omega-functions-anid-large}. Furthermore, we added a scalar function $\zeta(n)$, see \Cref{fig:omega-functions-anid-large}, in order to adapt the scale of the flux function leading to the proposed construction:
$$
(1-\omega(n)) {\T_{NID}} (\zeta(n) f_n)+\omega(n){\T_{TV}}(f_n).
$$
The impact of the scaling function $\zeta$ on ${\T_{NID}} (\zeta(n) \cdot)$  is depicted in \Cref{fig:flux-functions-anid-large} for $n=1$ and in \Cref{fig:flux-functions-anid-large-1000} for $n=1000$.

\begin{figure}[t]\centering
\includegraphics[width=\linewidth]{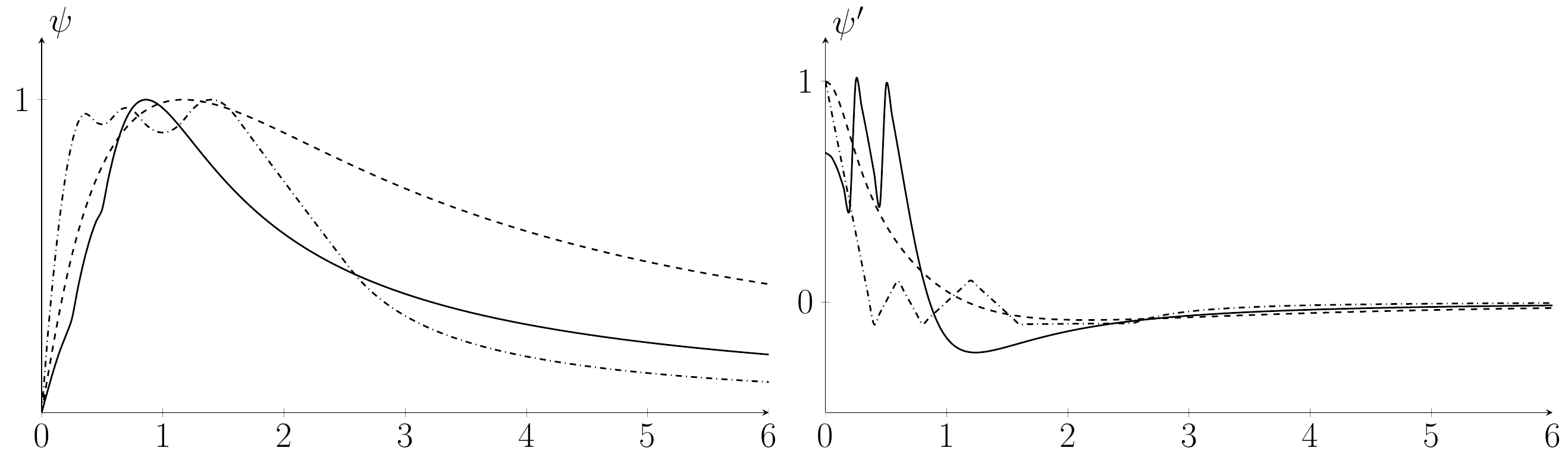}		
\caption{Normalized flux functions used for the NID regularization: $\psi^{(1)}$ (dashed), $\psi^{(2)}$ (solid) and $\psi^{(3)}$ (dashdotted) (left) as well as their rescaled derivatives (right). \label{fig:flux-functions-nid-large}}
\end{figure}
	
\begin{figure}[t]\centering
\includegraphics[width=0.7\linewidth]{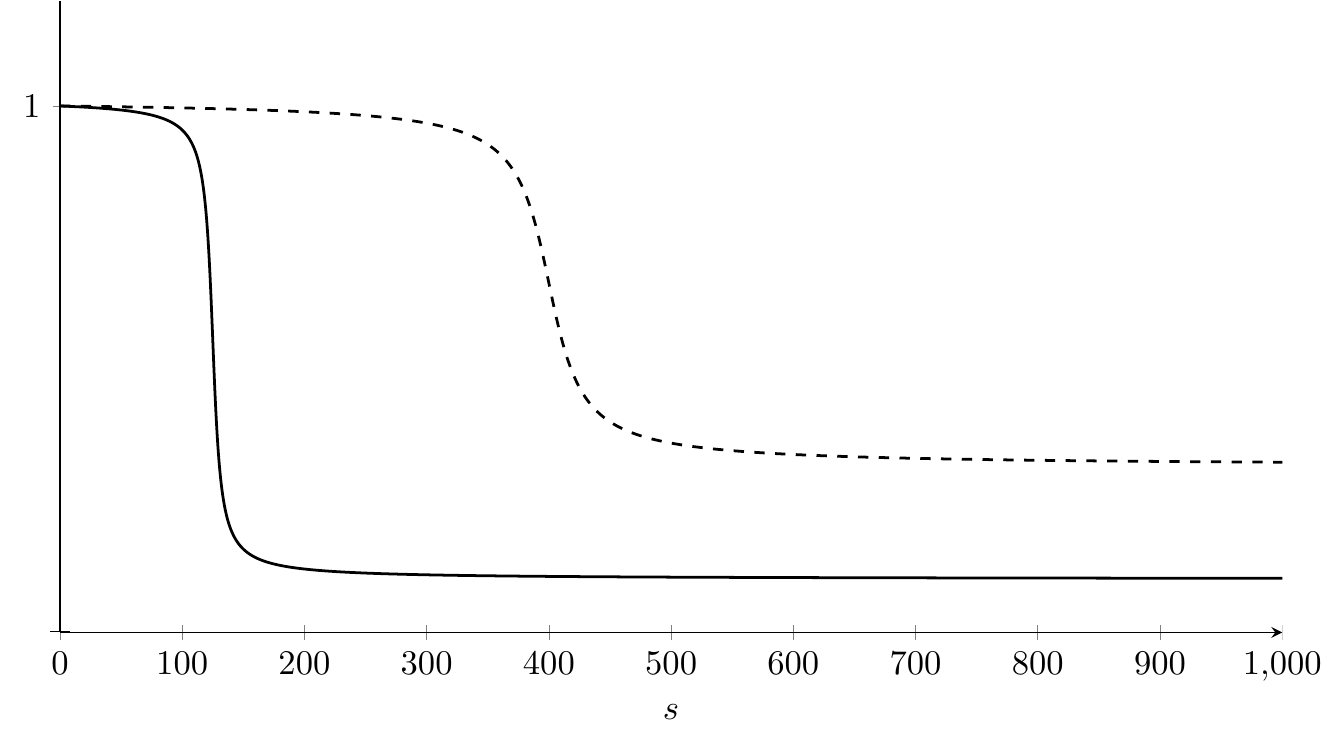}
\caption{Normalized switching function $\omega$ (solid) and scaling function $\zeta$ (dashed).}
\label{fig:omega-functions-anid-large}
\end{figure}

\begin{figure}[t]\centering
\includegraphics[width=\linewidth]{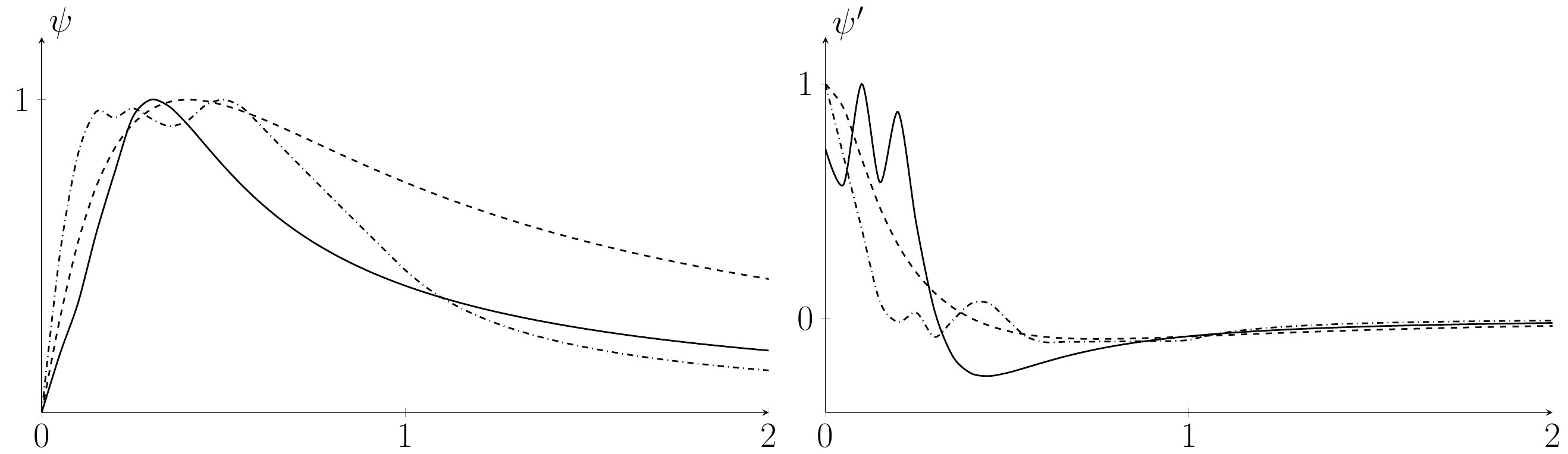}			
\caption{Normalized flux functions used for the A-NID results at the start 		$\psi^{(1)}$ (dashed), $\psi^{(2)}$ (solid) and $\psi^{(3)}$ (dashdotted) (left) as well as their rescaled derivatives (right).}
\label{fig:flux-functions-anid-large}
\end{figure}
	
\begin{figure}[t]\centering
\includegraphics[width=\linewidth]{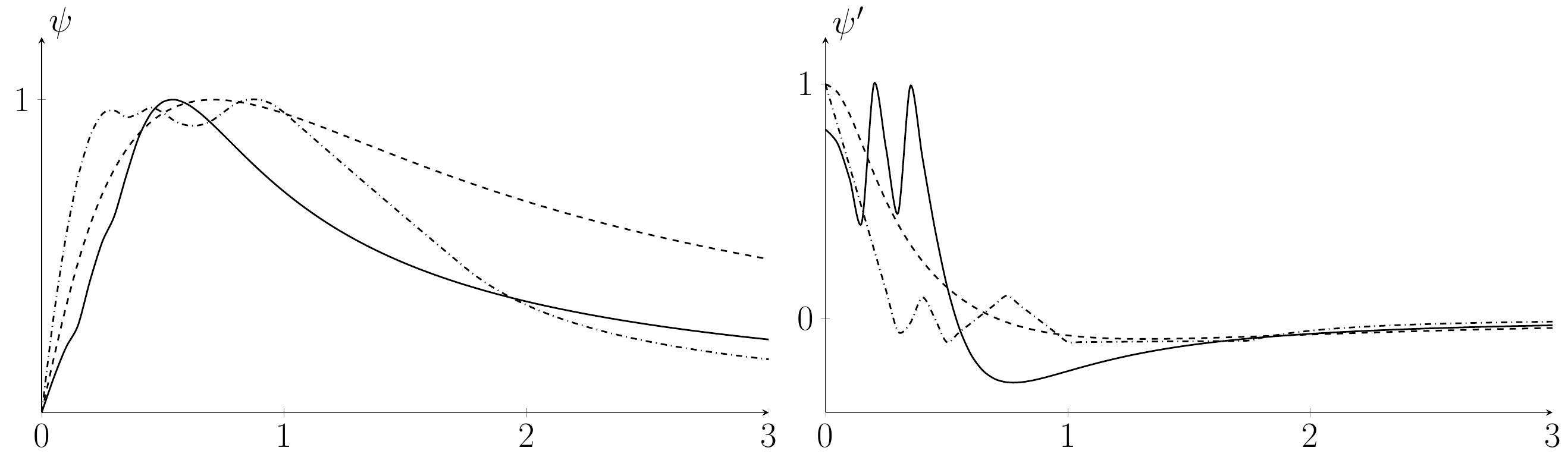}		
\caption{Normalized flux functions used for the A-NID results after 1000 iterations $\psi^{(1)}$ (dashed), $\psi^{(2)}$ (solid) and $\psi^{(3)}$ (dashdotted) (left) as well as their rescaled derivatives (right).}		\label{fig:flux-functions-anid-large-1000}
\end{figure}

\subsection{Simulation results and discussion}

The simulations were performed to solve the inverse problem $\A f =g$ associated to the Radon transform in the case of the Shepp-Logan phantom. We consider the noisy data $g_\delta$ given in \Cref{fig:sino}. Following on from the construction of the TV, NID and A-NID functionals as described above, we first considered the simple NID functional. These results are given in \Cref{fig:reconstructions-nid-large}. As expected the FBP reconstruction provides a noisy reconstruction in spite of the Shepp-Logan filter. The TV reconstruction shows a satisfactory reconstruction but the edges in the reconstruction are not sharp. The three NID reconstructions were implemented using \ref{alg:alg-final} and deliver a sharp reconstruction leading to very high SNR and PSNR, see \Cref{tab:comparison-results-large}. However, the small details like the shape of the tumors are not perfectly defined. \\

\par 
In order to find a compromise between sharpness and good localization of the different features, we implemented \ref{alg:alg-mixed} for the proposed A-NID functional. The results are given in \Cref{fig:reconstructions-anid-large}. We observe that the tumors (lower ellipses) are better reconstructed and located while preserving a high SNR and PSNR, see \Cref{tab:comparison-results-large}. The general visual impressions given by the reconstructions are validated by the structural similarity (SSIM) in \Cref{tab:comparison-results-large}.

	\begin{table}[h]
	\begin{tabular}{c|cccccccc}
	    & FBP   & TV    & NID 1 & NID 2 & NID 3 & A-NID 1 & A-NID 2 & A-NID 3\\\hline
	snr & 9.20  & 15.52 & 17.26 & 17.62 & 16.24 & \textbf{18.04} & 17.61 & 16.44\\
	psnr& 21.37 & 27.69 & 29.44 & 29.79 & 28.42 & \textbf{30.21} & 29.78 & 28.61\\
	ssim& 0.61  & 0.9   & 0.91  & 0.92  & 0.93  & 0.92  & 0.93  & \textbf{0.94}\\
	\end{tabular}
	\caption{Different error measures for the reconstructed images: TV and NID functionals were implemented by \Cref{alg:alg-final} while the A-NID functionals were implemented by \Cref{alg:alg-mixed}.}
	\label{tab:comparison-results-large}
	\end{table}

	\begin{figure}[t]
		\centering
		\begin{subfigure}[b]{6cm}
			\centering
			\includegraphics[width=2.8cm]{orig_image}
			\includegraphics[width=2.8cm]{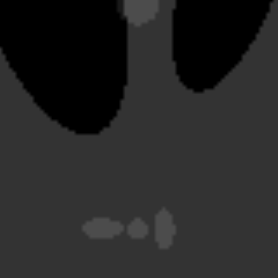}
			\caption{Original image}\label{fig:nid-a-large}
		\end{subfigure}
		\begin{subfigure}[b]{6cm}
			\centering
			\includegraphics[width=2.8cm]{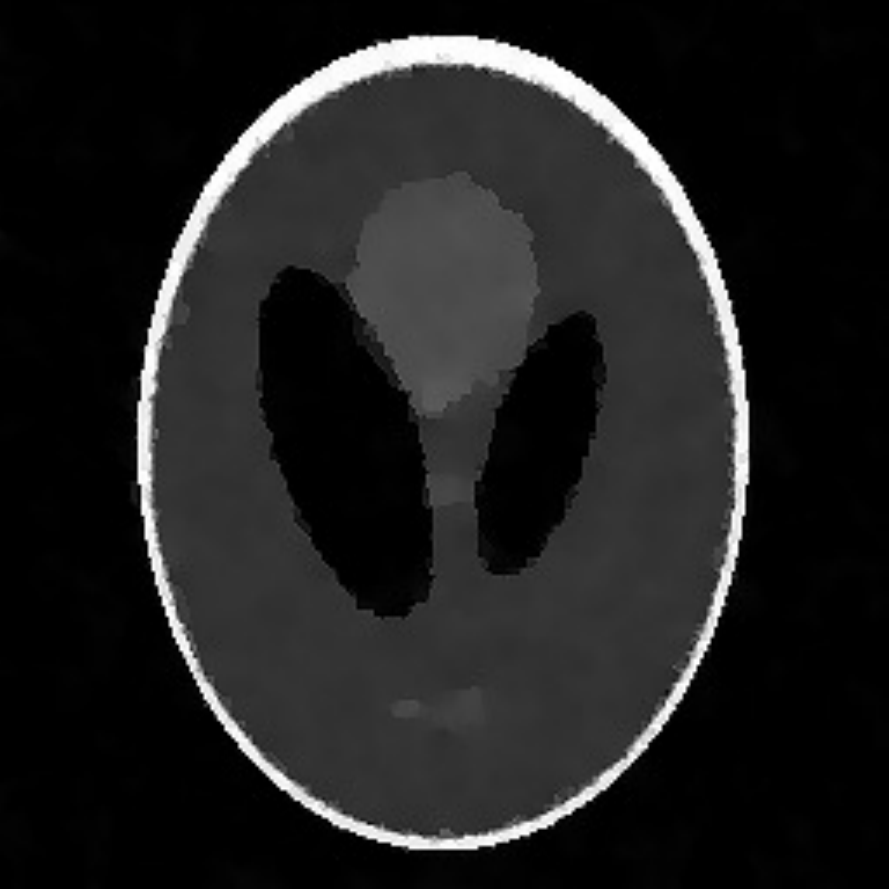}
			\includegraphics[width=2.8cm]{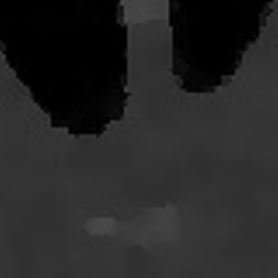}
			\caption{Reconstruction for $\phi^{(1)}$}\label{fig:nid-b-large}
		\end{subfigure}\\	
		\begin{subfigure}[b]{6cm}
			\centering
			\includegraphics[width=2.8cm]{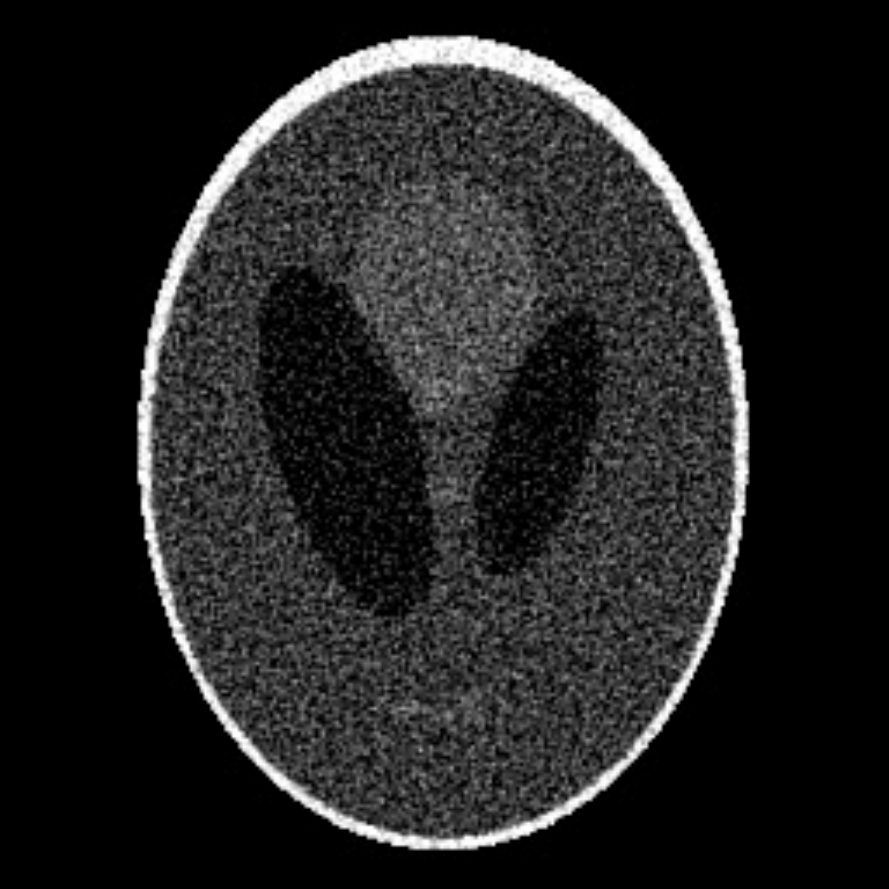}
			\includegraphics[width=2.8cm]{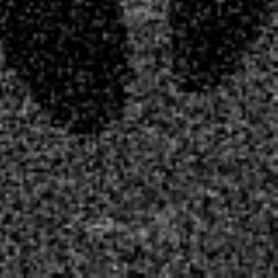}
			\caption{FBP reconstruction}\label{fig:nid-c-large}
		\end{subfigure}
		\begin{subfigure}[b]{6cm}
			\centering
			\includegraphics[width=2.8cm]{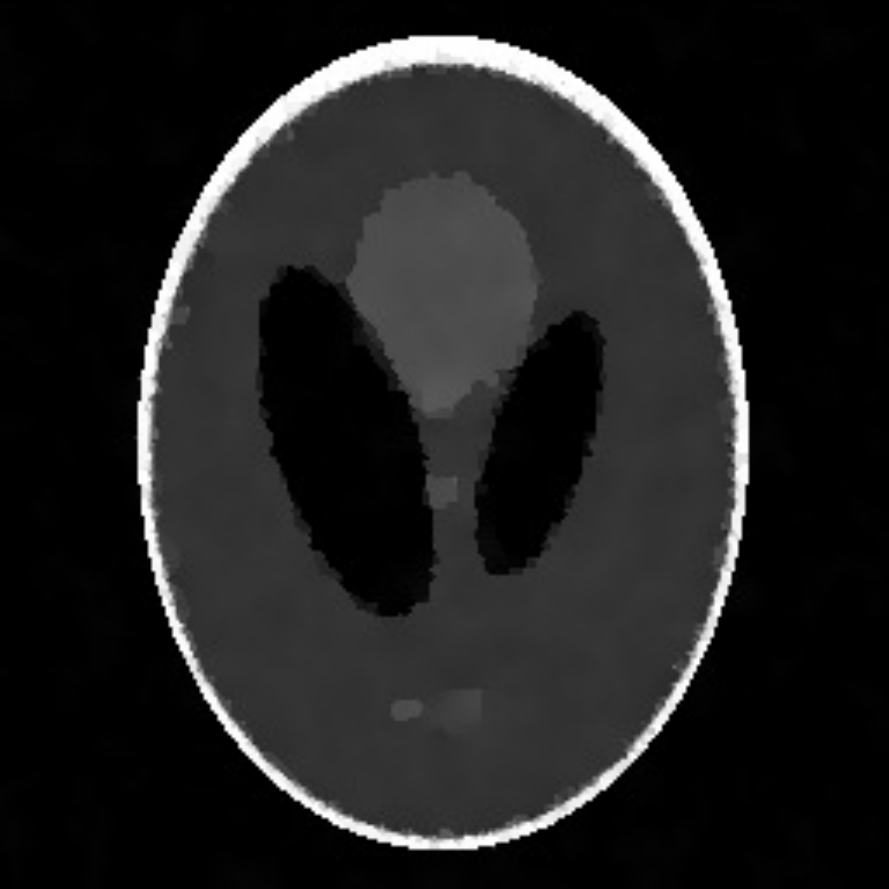}
			\includegraphics[width=2.8cm]{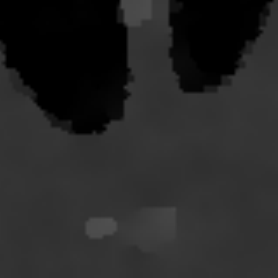}
			\caption{reconstruction for $\phi^{(2)}$}\label{fig:nid-d-large}
		\end{subfigure}\\	
		\begin{subfigure}[b]{6cm}
			\centering
			\includegraphics[width=2.8cm]{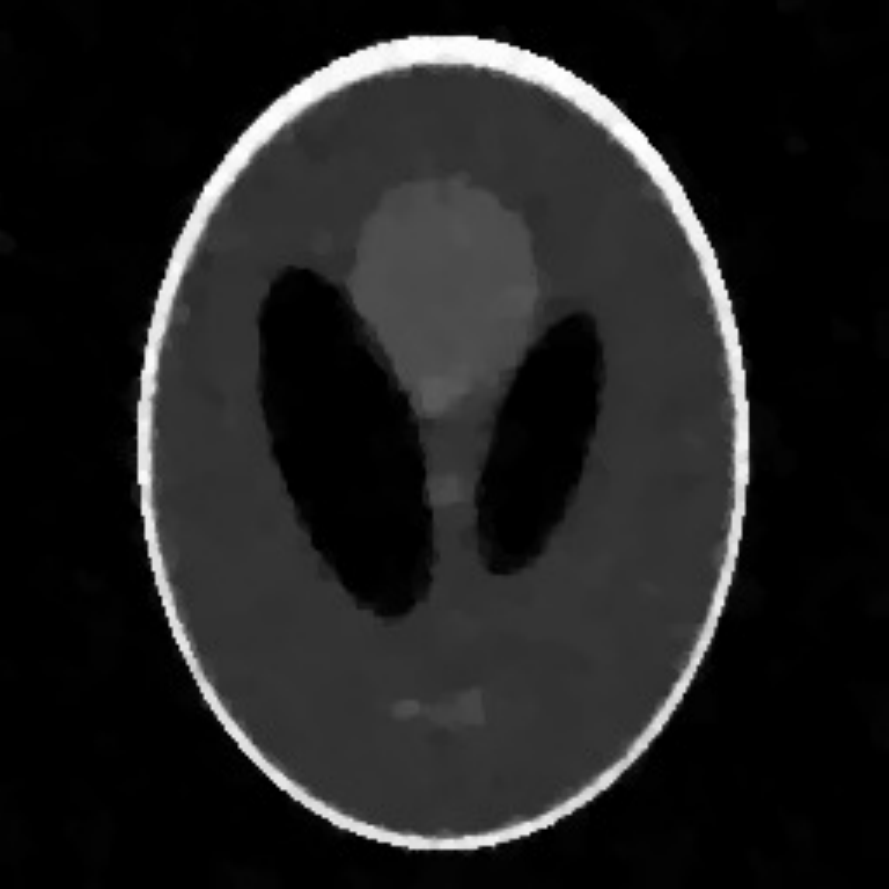}
			\includegraphics[width=2.8cm]{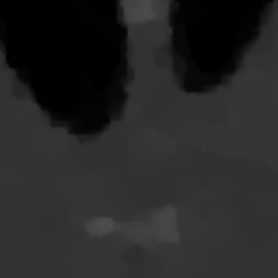}
			\caption{TV reconstruction}\label{fig:nid-e-large}
		\end{subfigure}
		\begin{subfigure}[b]{6cm}
			\centering
			\includegraphics[width=2.8cm]{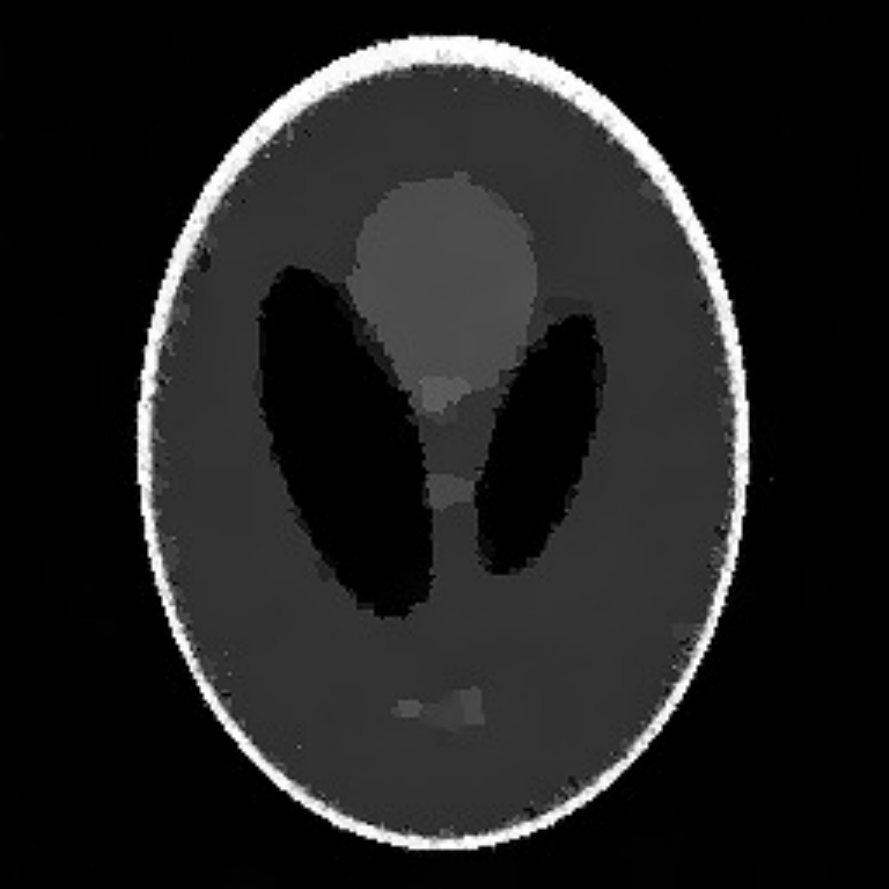}
			\includegraphics[width=2.8cm]{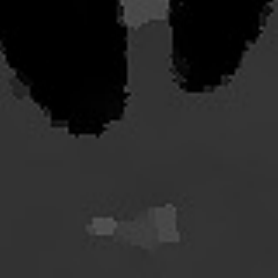}
			\caption{Reconstruction for $\phi^{(3)}$}\label{fig:nid-f-large}
		\end{subfigure}
		\vspace*{-2ex}
		\caption{Comparison of different NID reconstruction methods.}
		\label{fig:reconstructions-nid-large}
	\end{figure}
	
	\begin{figure}[t]
		\centering
		\begin{subfigure}[b]{6cm}
			\centering
			\includegraphics[width=2.8cm]{orig_image}
			\includegraphics[width=2.8cm]{orig_image_zoom}
			\caption{Original image}\label{fig:anid-a-large}
		\end{subfigure}
		\begin{subfigure}[b]{6cm}
			\centering
			\includegraphics[width=2.8cm]{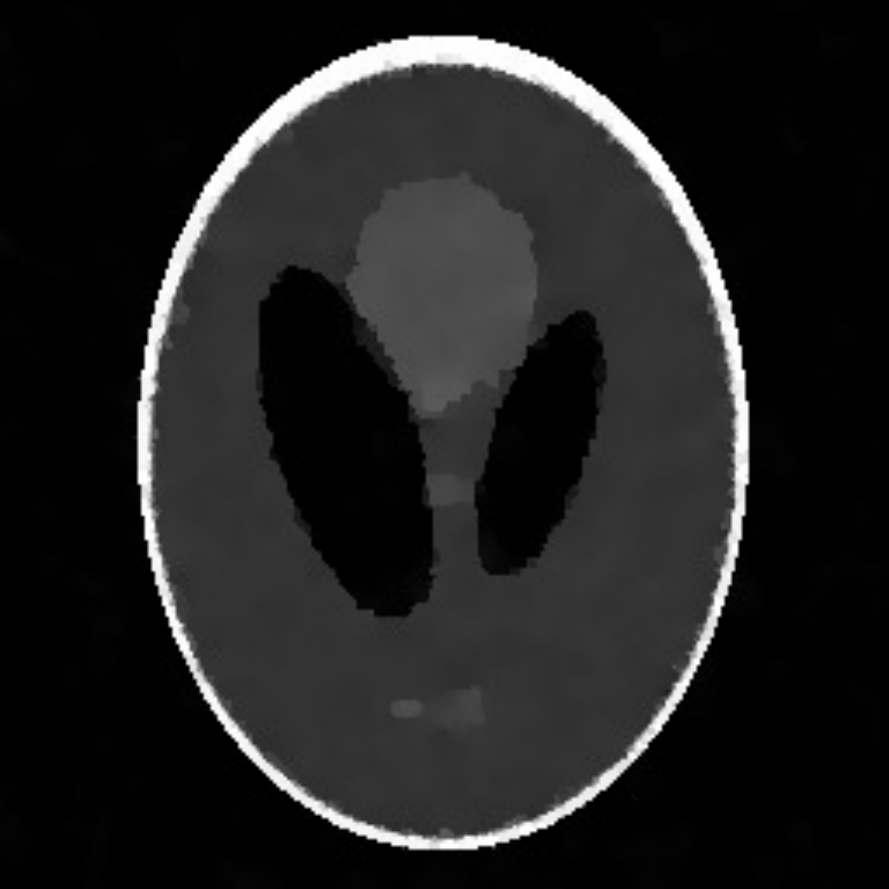}
			\includegraphics[width=2.8cm]{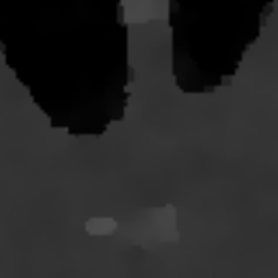}
			\caption{Reconstruction for $\phi^{(1)}$}\label{fig:anid-b-large}
		\end{subfigure}\\	
		\begin{subfigure}[b]{6cm}
			\centering
			\includegraphics[width=2.8cm]{fbp}
			\includegraphics[width=2.8cm]{fbp_zoom}
			\caption{FBP reconstruction}\label{fig:anid-c-large}
		\end{subfigure}
		\begin{subfigure}[b]{6cm}
			\centering
			\includegraphics[width=2.8cm]{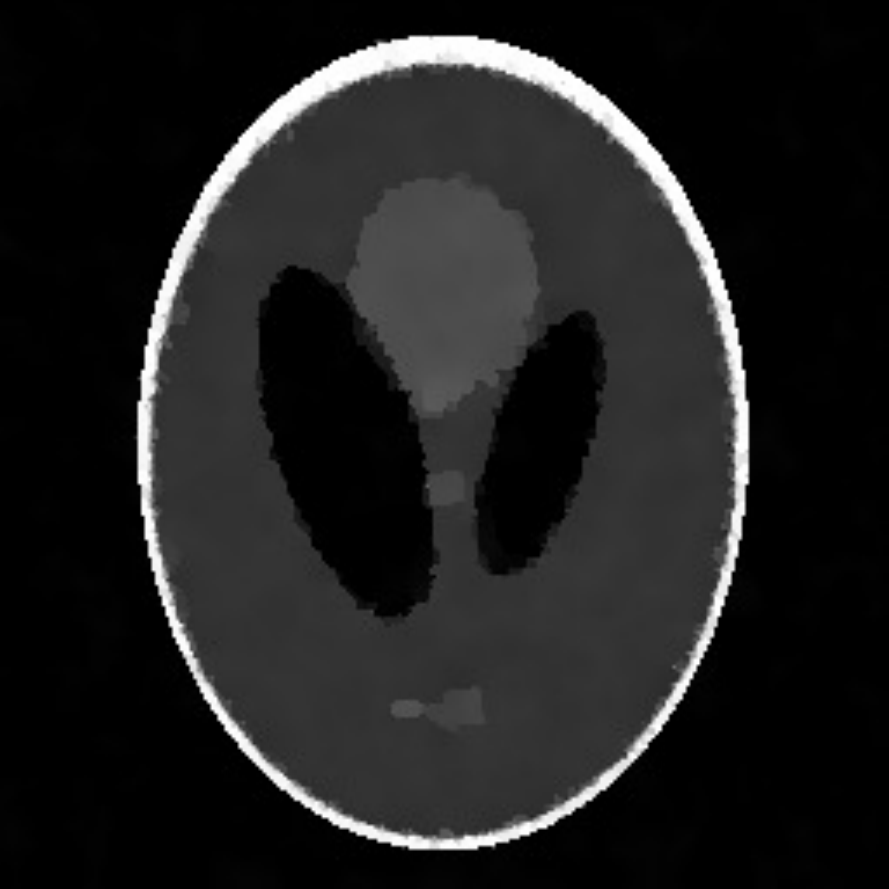}
			\includegraphics[width=2.8cm]{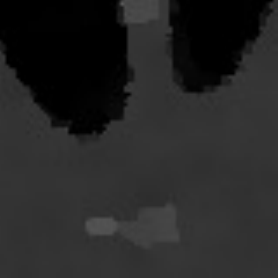}
			\caption{reconstruction for $\phi^{(2)}$}\label{fig:anid-d-large}
		\end{subfigure}\\	
		\begin{subfigure}[b]{6cm}
			\centering
			\includegraphics[width=2.8cm]{l2tv}
			\includegraphics[width=2.8cm]{l2tv_zoom}
			\caption{TV reconstruction}\label{fig:anid-e-large}
		\end{subfigure}
		\begin{subfigure}[b]{6cm}
			\centering
			\includegraphics[width=2.8cm]{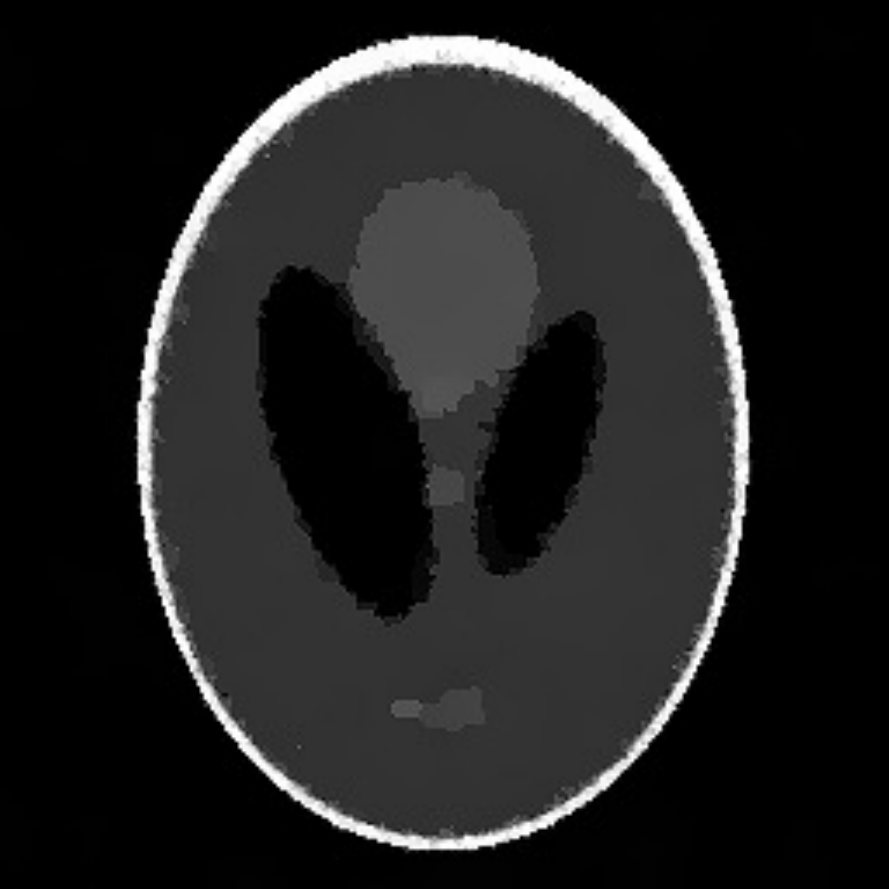}
			\includegraphics[width=2.8cm]{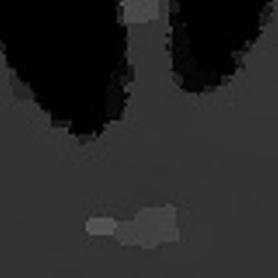}
			\caption{Reconstruction for $\phi^{(3)}$}\label{fig:anid-f-large}
		\end{subfigure}
		\vspace*{-2ex}
		\caption{Comparison of different reconstruction methods for the ANID case.}
		\label{fig:reconstructions-anid-large}
	\end{figure}


	\bibliographystyle{siamplain}
	\bibliography{Literature.bib}

\appendix
\section{ Proof of \Cref{thm:continuity-derivative} }\label{proof:continuity-derivative}

Before to go further, we define the $L_2$-norm for vector-valued functions as well as the $C^1$-norm, 
		 $$
		 \norm[v]_{L_2(\Omega,\mathbb{R}^2)}:=\left(\int_\Omega \norm[v(x)]^2_2\rmd x\right)^{1/2}, \qquad 
		 \norm[v]_{C^1(\overline{\Omega})}:=\sup_{x\in\overline{\Omega}}\norm[v(x)]_2.
		 $$
		 
\begin{enumerate}[label=\arabic*)]
    \item We want first to prove that the functional is Fr\'echet differentiable. For $\frac{1}{2}\norm[ \A f-g]^2_{\Y}$ and $\frac{\alpha}{2}\norm[f]_{ L_2(\Omega)}^2$ it is clear. Focusing on the NID term, we have that the operator $\nabla_{\sigma_k}:L_2(\Omega)\to L_2(\Omega,\mathbb{R}^2)$
is Fr\'echet differentiable since it is linear and bounded. Let $\tilde{f}\in L_2$ and consider for all $k$
		\begin{equation*}
			K_{\tilde{f}}^k:=K^k(\tilde{f}):L_2(\Omega,\mathbb{R}^2)\to \mathbb{R},\qquad \tilde{f}\mapsto\frac{1}{2}\int_{\Omega}p_k\left(\norm[\tilde{f}]_2^2\right)\rmd x.
		\end{equation*}
		As usual $p_k'=\varphi_k$ denotes the associated diffusion function. Also for all $k$ the operator
		\begin{equation*}
			\tilde{K}^k_{\tilde{f}}:L_2(\Omega,\mathbb{R}^2)\to\mathbb{R},\qquad u\mapsto\int_{\Omega}\varphi_k\left(\norm[\tilde{f}]_2^2\right)\scalarproduct{\tilde{f}}{u}_2\rmd x
		\end{equation*}
		is linear and bounded for fixed $\tilde{f}$ since $\varphi_k$ is bounded. Under our conditions on $p_k$ (and implicitly $\varphi_k$), $\phi:\mathbb{R}^2\to\mathbb{R}$, $\phi(\xi):= \frac{1}{2}p_k(\norm[\xi]^2_2)$ is twice continuously differentiable, satisfies $\frac{\partial}{\partial \xi}\phi(v)= \varphi_{k}(\norm[v]^2_2)v^T$ and furthermore, the Hessian stays bounded on $\mathbb{R}$ as a consequence of the chain rule.	We consequently obtain with the Taylor's theorem for multivariate functions (see Theorem 5.8 and Remark 5.9, section VII in \cite{amann06}):
			\begin{align}
					&\left|\phi(v+w)-\phi(v)-\frac{\partial}{\partial \xi}\phi\left(v\right)w\right|\nonumber\\
					&=\scalarproduct{w}{\frac{\partial^2}{\partial \xi^2}\phi\left(v\right)w}_2+ \int_{0}^{1}\scalarproduct{w}{\left(\frac{\partial^2}{\partial \xi^2}\phi\left(v+t w\right)-\frac{\partial^2}{\partial \xi^2}\phi\left(v\right)\right)w}_2\rmd t\nonumber\\
					&\leq C\norm[w]_{2}^2\label{eq:taylor-ineq}
			\end{align}
			for some $C>0$ independent of $v,w$, which shows that 
			\begin{equation*}
				\left|K_{\tilde{f}+h}^k-K_{\tilde{f}}^k-\tilde{K}_{\tilde{f}}^k h\right|\leq C\norm[h]_{L_2(\Omega,\mathbb{R}^2)}^2
			\end{equation*}	
			for any search direction $h\in L_2(\Omega,\mathbb{R}^2)$, \textit{i.e.} $\tilde{K}_f^k$ is the Fr\'{e}chet-derivative of $K^k$. Using the chain rule for the Fr\'{e}chet-derivative (see \textit{e.g.} \cite[Theorem III.5.4]{werner11}), we find that $K^k\circ \nabla_{\sigma_k}$ is Fr\'{e}chet-differentiable for every $f\in L_2(\Omega)$ with for all $\tilde{h}\in L_2(\Omega)$
			\begin{equation*}
			    \left(K^k\circ \nabla_{\sigma_k}\right)'(f)\tilde{h}=\tilde{K}_{\nabla_{\sigma_k} f}^k K_{\nabla_{\sigma_k} f}^k \tilde{h} =\scalarproduct{\varphi_{k}\left(\norm[\nabla_{\sigma_k} f]_2^2\right)\nabla_{\sigma_k}f}{\nabla_{\sigma_k} \tilde{h}}_{L_2(\Omega)}.
			\end{equation*}
			By linearity of the Fr\'echet derivative \cref{eq:frech-deriv-func} follows.
			\item In addition, for any $u\in \mathbb{R}^2 $ we have
			 \begin{align*}			    
			 	&\left| \frac{\partial}{\partial \xi}\phi(v+w)u -\frac{\partial}{\partial \xi}\phi(v)u\right|\\
			 		\leq &\norm[\frac{\partial}{\partial \xi}\phi(v+w)-\frac{\partial}{\partial \xi}\phi(v)]_2\norm[u]_2\\	
			 		= &\norm[\frac{\partial^2 }{\partial \xi^2}\phi(v)w+\int_{0}^1\left(\frac{\partial^2 }{\partial \xi^2}p(v+t w)-\frac{\partial^2 }{\partial \xi^2}\phi(v)\right)w\,\rmd t]_2\norm[u]_2\\
		 			\leq &\, C \norm[w]_2 \norm[u]_2.
		 	\end{align*} 
		 	Consequently by applying the  H\"older's inequality, it holds for any $\tilde{u}\in L_2(\Omega,\mathbb{R}^2)$	
		 	\begin{align*}
		 		&
		 		\left| \int_{\Omega} \scalarproduct{\varphi_k\left(\norm[\tilde{f}+h]^2_2\right)(\tilde{f}+h)-\varphi_k\left(\norm[\tilde{f}]^2_2\right)\left(\tilde{f}\right)}{\tilde{u}}_2\rmd x \right|\\
		 		&\quad\leq C\int_{\Omega}\norm[h]_2\norm[u]_2\rmd x \leq C\norm[h]_{L_2(\Omega,\mathbb{R}^2)}\norm[\tilde{u}]_{L_2(\Omega,\mathbb{R}^2)}
		 	\end{align*}
            This implies (since $\Omega$ is bounded) that the operator norm of $\tilde{K}_{\tilde{f}+h}^k-\tilde{K}^k_{\tilde{f}}$ satisfies 
		 \begin{equation}
		 	\label{eq:cont-eq2}
		 	\norm[\tilde{K}_{\tilde{f}+h}^k-\tilde{K}_{\tilde{f}}^k]_{\mathcal{L}(L_2(\Omega),\mathbb{R})}\leq C \norm[h]_{L_2(\Omega,\mathbb{R}^2)}
		 \end{equation}
		 implying that
		 \begin{align*}
		 	\norm[\tilde{K}_{\nabla_{\sigma_k} f_1}^k-\tilde{K}_{\nabla_{\sigma_k} f_2}^k]_{\mathcal{L}(L_2(\Omega),\mathbb{R})}
		 	&\leq C\norm[\nabla_{\sigma_k}f_1-\nabla_{\sigma_k}f_2]_{L_2(\Omega,\mathbb{R}^2)}\\
		 	&\leq \tilde{C} \norm[f_1-f_2]_{L_2(\Omega)}
		 \end{align*}
		 for some $\tilde{C}>0$. Since
		 \begin{align*}
				&\T_{g,\gamma,\sigma,\alpha}'(f_1)-\T_{g,\gamma,\sigma,\alpha}'(f_2)=
				\A^*( \A f_1-f_2)+\alpha (f_1-f_2)\\
				&\quad-\sum_{k=1}^{K}\gamma_k(\nabla_{\sigma_k})^*\left(\varphi_k\left(\norm[\nabla_{\sigma_k} f_1]_2^2\right)\nabla_{\sigma_k} f_1-\varphi_k\left(\norm[\nabla_{\sigma_k} f_2]_2^2\right)\nabla_{\sigma_k} f_2\right)
			\end{align*}
			and $(\nabla_{\sigma_k})^*$ is bounded, \cref{eq:frechet-lipschitz} follows.
		 \item  
		 Let $f_n\rightharpoonup f$. Since $\norm[\nabla_{\sigma_k}(f_n-f)(x)]_2\leq \norm[f_n-f]_{C^1(\overline{\Omega})}\forall x\in\Omega$, there exists a constant $\hat{C}>0$ such that
		 $$
		 \norm[\nabla_{\sigma_k} (f_n-f)]_{L_2(\Omega,\mathbb{R}^2)}\leq \hat{C} \norm[\nabla_{\sigma_k}(f_n-f)]_{C^1(\overline{\Omega})}.
		 $$
		 Furthermore, it holds $\norm[\nabla_{\sigma_k} (f_n-f)]_{L_2(\Omega,\mathbb{R}^2)} \to 0$ due the compactness of $\nabla_{\sigma_k}$. We now have for every  $\tilde{u}\in L_2(\Omega)$
		 \begin{align*}
		 	\left|\tilde{K}_{\nabla_{\sigma_k} f_n}^k \nabla_{\sigma_k} u -\tilde{K}_{\nabla_{\sigma_k} f}^k\nabla_{\sigma_k} \tilde{u}\right|&\leq \norm[\tilde{K}_{\nabla_{\sigma_k} f_n}^k-\tilde{K}_{\nabla_{\sigma_k} f}^k]\norm[\nabla_{\sigma_k} \tilde{u}]_{L_2(\Omega,\mathbb{R}^2)}\\&\leq \norm[\nabla_{\sigma_k}] \norm[\tilde{K}_{\nabla_{\sigma_k} f_n}^k-\tilde{K}_{\nabla_{\sigma_k}f}^k]\norm[\tilde{u}]_{L_2(\Omega)}.
		 \end{align*}
		 Using \cref{eq:cont-eq2} for $h:=\nabla_{\sigma_k}f_n-\nabla_{\sigma_k}f$ shows $\tilde{K}_{\nabla_{\sigma_k} f_n}^k\to\tilde{K}_{\nabla_{\sigma_k} f}^k$. Since $\A f_n\rightharpoonup \A f$, this proves \cref{eq:weak-weak-continuity}.	
	 \end{enumerate}
	\section{Proof of \Cref{thm:conv-anid} }\label{proof:conv-anid}
	\begin{enumerate}[label=\arabic*)]
		\item Similarly to \cref{eq:taylor-ineq}, we obtain with Taylor's theorem for $\phi(\xi)=\frac{1}{2}p_{k,n}\left(\norm[\xi]_2^2\right)$ ($k=1,...,K$, $n\in\mathbb{N}$),
		\begin{align*}
			\phi(v+w)&=\phi(v)+\frac{\partial}{\partial 	\xi}\phi\left(v\right)w+\scalarproduct{w}{\frac{\partial^2}{\partial 	\xi^2}\phi\left(v\right)w}_2+\\
			&\qquad\qquad \int_{0}^{1}\scalarproduct{w}{\left(\frac{\partial^2}{\partial \xi^2}\phi\left(v+t w\right)-\frac{\partial^2}{\partial \xi^2}\phi\left(v\right)\right)w}_2\rmd t
		\end{align*}
		for arbitrary $v,w\in \mathbb{R}^2$.
		Since $\frac{\partial^2}{\partial \xi^2}\phi(v)$ stays bounded, the inequality
		\begin{align*}
			\phi(v+w)&\leq \phi(v)+\frac{\partial}{\partial 	\xi}\phi\left(v\right)w+\kappa_k\norm[w]_2^2
		\end{align*}
		holds for $\kappa_k \geq 3L_k$. Consequently, for $k=1,...,K$
		\begin{align*}\allowdisplaybreaks
			&\frac{\gamma_k}{2}\int_{\Omega}p_{k,n}\left(\norm[\nabla_{\sigma_k}\left(f_n-\tau^l v_n\right)]_2^2\right)\rmd x \\
&\leq \frac{\gamma_k}{2}\int_{\Omega}p_{k,n}\left(\norm[\nabla_{\sigma_k}\left(f_n\right)]_2^2\right)\rmd x +\gamma_k\int_{\Omega}\kappa_k \norm[\nabla_{\sigma_k}]^2\norm[v_n]_2^2\rmd x\\
			&\quad -\gamma_k\int_{\Omega}\varphi_{k,n}\left(\norm[\nabla_{\sigma_k}f_n]_2^2\right)\left(\nabla_{\sigma_k}f_n\right)^T\nabla_{\sigma_k} v_n\rmd x .
		\end{align*} 
		An analogous inequality holds for $p_{AV}$, $\varphi_{AV}$ with some $\kappa_{TV}>0$.
		Furthermore, it holds
		\begin{align*}\allowdisplaybreaks
			&\frac{1}{2}\norm[\A (f_n-\tau^lv_n)-g]^2_{L_2(\tilde{\Omega})}+\frac{\alpha}{2}\norm[f_n-\tau^lv_n]^2_{L_2(\Omega)}\\
			\leq& \frac{1}{2}\norm[\A f_n-g]^2_{L_2(\tilde{\Omega})}+\frac{\alpha}{2}\norm[f_n]^2_{L_2(\Omega)}\\
			& -\tau^l \scalarproduct{\A^*(\A v_n-g)}{v_n}_{L_2(\Omega)}-\alpha \tau^l\scalarproduct{f_n}{v_n}_{L_2(\Omega)}\\
			&+\frac{1}{2}\tau^{2l}\norm[\A v_n]_{L_2(\tilde{\Omega})}+\frac{\alpha}{2} \tau^{2l}\norm[v_n]_{L_2(\Omega)}^2.
		\end{align*} 
		Thereby, the following inequalities hold
		\begin{align*}
		    \T_{NID}^n(f_n-\tau^l v_n) \leq&\T_{NID}^n (f_n)-\left(\T_{NID}^{n}\right)' (f_n)v_n\\
		    &+
		    \frac{1}{2}\tau^{2l}\norm[\A v_n]_{L_2(\tilde{\Omega})}+\frac{1}{2}\tau^{2l}\sum_{k=1}^K \kappa_k\gamma_k\norm[v_n]^2+\frac{\alpha}{2} \tau^{2l}\norm[v_n]_{L_2(\Omega)}^2 ,\\
		    \T_{TV}(f_n-\tau^l v_n) \leq&\T_{TV} (f_n)-\T_{TV}'(f_n)v_n\\
		    &+\frac{1}{2}\tau^{2l}\norm[\A v_n]_{L_2(\tilde{\Omega})}+\frac{1}{2}\tau^{2l}\kappa_{TV}\beta \norm[v_n]^2 +\frac{\alpha}{2} \tau^{2l}\norm[v_n]_{L_2(\Omega)}^2 .
		\end{align*}
		Since $\kappa_k$ is independent from $n$ for $k=1,...,N$, we now find $\tilde{\kappa}>0$ also independent from $n$ such that
		\begin{align*}
		\T_n(f_{n+1})&=T_n(f_{n}-\tau^lv_n)\\
		&=(1-\omega(n))\T_{NID}^n(f_{n}-\tau^lv_n)+\omega(n)\T_{TV}(f_{n}-\tau^lv_n)\\
		&\leq \T_n(f_{n})-t_n\T_n'(f_n)v_n+\frac{1}{2}\kappa \tau^{2l}\norm[v_n]^2\\
		&=\T_n(f_{n})+\tau^l\left(\frac{1}{2}\tilde{\kappa }\tau^l-1\right)\norm[v_n]^2.
		\end{align*}
		Since $\norm[v_n]>0$ and $\frac{1}{2}\kappa \tau^l-1$ converges to $-1<-\mu$ as $l\to\infty$, \cref{eq:mixed_step_cond} holds as soon as
		$$
		\frac{1}{2}\tilde{\kappa} \tau^l-1\leq-\mu \quad\Leftrightarrow \quad			\tau^l\leq \frac{2(1-\mu)}{\tilde{\kappa}}
		$$
		Setting $\theta:=\min\left(\tau,\frac{2(1-\mu)}{\tilde{\kappa}\tau}\right)$ proves the assertion.
		\item Let $n>N$. It holds that
		\begin{align*}
		\T_{n+1}(f_{n+1})-
		\T_{n}(f_{n})&=\underbrace{\left(
			\T_{n+1}(f_{n+1})-
			\T_{n}(f_{n+1})\right)}_{=:I_1}+\underbrace{\left(
			\T_{n}(f_{n+1})-
			\T_{n}(f_{n})\right)}_{=:I_2}.
		\end{align*}
		Due to \cref{eq:wolfe-1}, $I_2$ is smaller than zero, in particular,
		\begin{equation}
		\label{eq:smaller_than}
		I_2\leq -\mu t_n\norm[v_n]^2.
		\end{equation}	
		For $I_1$ it holds by construction of the functional, see \cref{eq:condition_nid_nk_2},
		\begin{align*}
		I_1&=\left(1-\omega(n+1)\right)\T_{NID}^{n+1}(f_{n+1})+\omega(n+1)\T_{TV}(f_{n+1})\\
		&\qquad-\left(1-\omega(n)\right)\T_{NID}^{n}(f_{n+1})-\omega(n)\T_{TV}(f_{n+1})\\
		&=\left(\omega(n)-\omega(n+1)\right) \left(\T_{NID}^{n}(f_{n+1}) - \T_{TV}(f_{n+1}) \right) \\ 
	&+ (1-\omega(n+1)) \left(\T_{NID}^{n+1}(f_{n+1})  - \T_{NID}^{n}(f_{n+1}) \right) \ < \ 
		\mu t_n\norm[v_n]^2.
		\end{align*}
		Together, this shows $I_1+I_2<0$.
		\item As shown in 2), $(\T_n(f_n))_{n>N}$ is decreasing, which implies boundedness. Since 
		$\T_n(f_n)\geq \varepsilon \T_{NID}^n(f_n)$ for sufficiently large $n$, the uniform coercivity of $\T_{NID}^n$ shows that $(f_n)_{n}$ is bounded. The existence of an accumulation point is given by $\Sob^1(\Omega)$ being reflexive which ends the proof.
		\item $(\T_n(f_n))_{n>N}$ is monotonically decreasing and bounded from below. Consequently, the limit
		\begin{align*}
		T:=\lim_{n\to\infty} \T_n(f_n)
		\end{align*}
		exists.
		On the one hand, we have $\T_n(f_{n+1})\leq T_n(f_n)$, implying
		$$\limsup\limits_{n\to\infty}~\T_n(f_{n+1})\leq~T.$$ On the other hand, since $I_2 \leq -\mu t_n\norm[v_n]^2$ and $I_1<\mu t_n\norm[v_n]^2$, we have for $n>N$:
		\begin{align*}
		\T_{n+1}(f_{n+1})< \T_{n}(f_{n+1})+\mu t_n\norm[v_n]^2\leq\T_n(f_n).
		\end{align*}
		Using a Sandwich argument we can deduce that
		\begin{align*}
		    \lim\limits_{n\to \infty}\T_{n}(f_{n+1})=T.
		\end{align*}
		We now have
		\begin{align*}
		0\leq \mu t_n\norm[v_n]^2\leq \T_{n}(f_{n})-\T_{n}(f_{n+1})\to 0
		\end{align*}
		which completes the proof since $t_n$ is bounded from below according to 1).
	\end{enumerate}

\end{document}